\newtheorem{lem}{Lemma}[section]
\newtheorem{thm}[lem]{Theorem}
\newtheorem{prop}[lem]{Proposition}
\theoremstyle{definition}
\newtheorem{defn}[lem]{Definition}
\newtheorem{example}[lem]{Example}
\newtheorem{rem}[lem]{Remark}
\newcommand{\CC}{{\mathbb C}}
\newcommand{\RR}{{\mathbb R}}
\newcommand{\Rhat}{\widehat{\mathbb{R}}}
\newcommand{\ZZ}{{\mathbb Z}}
\newcommand{\supp}{\text{supp}\,}       % Support
\def\mbbu{\mathbb{1}}                   % Black Board Bold face 1
\def\benm{\begin{enumerate}}            % Begin enumerate command
\def\eenm{\end{enumerate}}              % End enumerate command
\newcommand{\norm}[1]{\left\Vert #1\right\Vert}         % Norm command (1 argument)
\newcommand{\inner}[2]{\left\langle #1, #2\right\rangle}    % Inner Product (2 arguments)
\title[Generalized Fourier frames in terms of balayage]
{Generalized Fourier frames in terms of balayage}
\date{}
\author{Enrico Au-Yeung}
\address{ Department of Mathematics\\
         University of British Columbia\\
          Vancouver, BC V6T--1Z4\\
         Canada}
 \email{enricoauy@math.ubc.ca}
\author{John J. Benedetto}
\address{Norbert Wiener Center\\
         Department of Mathematics \\
         University of Maryland \\
         College Park, MD 20742 \\
         USA}
\email{jjb@math.umd.edu}
\urladdr{http://www.math.umd.edu/\textasciitilde jjb}
\begin{document}

\begin{abstract}
Based on Beurling's theory of balayage,  we develop the theory of non-uniform sampling in the context
of the theory of frames for the settings of the Short Time Fourier Transform and
pseudo-differential operators. There is sufficient complexity to warrant new examples generally, and to
resurrect the formulation of balayage in terms of covering criteria with an eye towards an expanded
theory as well as computational implementation.

\end{abstract}

\maketitle

%%%%%%%%%%%%%%%%%%%%%%%%%%%%%%%%%%%%%%%%%%%%%%%%%%%%%%%%%
%%%%%%%%%%%%%%%%%%%%%%%%%%%%%%%%%%%%%%%%%%%%%%%%%%%%%%%%%
\section{Introduction}

\subsection{Background and theme}
There has been a great deal of work during the past quarter century in analyzing, formulating, validating, and extending sampling formulas,
\begin{equation}\label{eq:sampling}
f(x) = \sum f(x_n) s_n,
\end{equation}
for non-uniformly spaced sequences $\{x_n\}$, for specific sequences of sampling functions $s_n$ depending on $x_n$, and for classes of functions $f$ for which such formulas are true. For glimpses into the literature, see the Journal of Sampling Theory in Signal and Image Processing, the influential book by Young \cite{youn2001}, edited volumes such as \cite{BenFer2001}, and specific papers such as those by Jaffard \cite{jaff1991} and Seip \cite{seip1995}. This surge of activity is intimately related to the emergence of wavelet and Gabor theories and more general frame theory. Further, it is firmly buttressed by the profound results of Paley-Wiener \cite{PalWie1934}, Levinson \cite{levi1940}, Duffin-Schaeffer \cite{DufSch1952}, Beurling-Malliavin \cite{BeuMal1962}, \cite{BeuMal1967}, Beurling (unpublished 1959-1960 lectures), and H.~J.~ Landau \cite{land1967}, that themselves have explicit origins by Dini \cite{dini1917}, as well as G. D. Birkhoff (1917), J. L. Walsh (1921), and Wiener (1927), see \cite{PalWie1934}, page 86, for explicit references.

The setting will be in terms of classical spectral criteria to prove non-uniform sampling formulas such as (\ref{eq:sampling}). Our theme is to generalize non-uniform sampling in this setting to the Gabor theory \cite{groc1991}, \cite{FeiSun2006}, \cite{LabWeiWil2004}, as well as to the setting of time-varying signals and pseudo-differential operators. The techniques are based on Beurling's methods from 1959-1960, \cite{beur1966}, \cite{beur1989}, pages 299-315, \cite{beur1989}, pages 341-350, which incorporate balayage, spectral synthesis, and strict multiplicity. Our formulation is in terms of the theory of frames.

\subsection{Definitions}
Let $\mathcal{S}(\mathbb{R}^d)$ be the Schwartz space of rapidly decreasing smooth functions on $d$-dimensional Euclidean space $\RR^d$.  We define the Fourier transform
and inverse Fourier transform
of $f \in \mathcal{S}(\mathbb{R}^d)$ by the formulas,
$$
   \widehat{f}(\gamma) = \int_{\mathbb{R}^d} f(x) e^{-2 \pi i x \cdot \gamma} \ dx
    \quad \text{ and } \quad (\widehat{f})^{\vee}(x) = f(x) = \int_{\mathbb{\widehat{R}}^d} \widehat{f}(\gamma) e^{2 \pi i x \cdot \gamma} \ d\gamma,
$$
respectively.
$\Rhat^d$ denotes $\RR^d$ considered as the spectral domain. If $F \in \mathcal{S}(\Rhat^d)$, then we write $F^\vee(x) = \int_{\Rhat^d}F(\gamma)e^{2\pi i x \cdot \gamma}\,d\gamma$. The notation ``$\int$"' designates integration over $\RR^d$ or $\Rhat^d$.
The Fourier transform extends to tempered distributions. If $X \subseteq \RR^d$, where $X$ is closed, then $M_b(X)$ is the space of bounded Radon measures $\mu$ with support, $\supp(\mu)$, contained in $X$. $C_b(\RR^d)$ denotes the space of complex valued bounded continuous functions on $\RR^d$.
\begin{defn}\label{defn:frame}(Frame)  Let $H$ be a separable Hilbert space.  A sequence $\{x_{n}\}_{n \in \ZZ} \subseteq H$ is a \emph{frame} for $H$ if there are positive constants $A$ and $B$ such that
\[\forall \ f \in H, \quad A \lVert f \rVert^{2} \leq \sum_{n \in \ZZ} |\langle f,x_{n}\rangle|^{2} \leq B \lVert f \rVert^{2} .  \]
The constants $A$ and $B$ are lower and upper frame bounds, respectively.  They are not unique.  We choose $B$ to be the infimum over all upper frame bounds, and we choose $A$ to be the supremum over all lower frame bounds. If $A = B$, we say that the frame is a tight frame or an $A$-tight frame for $H$.
\end{defn}
\begin{defn}(Fourier frame) Let $E \subseteq \mathbb{R}^d$ be a sequence and let $\Lambda \subseteq \widehat{\mathbb{R}}^d$ be a compact set. Notationally, let $e_{x}(\gamma) = e^{2 \pi i x \cdot \gamma}$. The sequence $\mathcal{E}(E) = \{e_{-x}: x \in E \}$ is a \emph{Fourier frame} for $L^2(\Lambda)$ if there are positive constants $A$ and $B$ such that
\[\forall \ F \in L^2(\Lambda), \quad A \lVert F \rVert^{2}_{L^2(\Lambda)} \leq \sum_{x \in E} |\langle F,e_{-x}\rangle|^{2} \leq B \lVert F \rVert^{2}_{L^2(\Lambda)}.  \]
Define the \emph{Paley-Wiener space}, $$PW_{\Lambda}= \{f \in L^2(\RR^d): \supp (\widehat{f}) \subseteq \Lambda\}.$$ Clearly, $\mathcal{E}(E)$ is a Fourier frame for $L^2(\Lambda)$ if and only if the sequence,
$$
\{(e_{-x} \ \mathbb{1}_{\Lambda})^\vee: x \in E \} \subseteq PW_{\Lambda}, 
$$
is a frame for $PW_{\Lambda}$, in which case it is called a \emph{Fourier frame} for $PW_{\Lambda}$. Note that $\inner{F}{e_{-x}} = f(x)$ for $f \in PW_{\Lambda}$, where $\widehat{f} = F \in L^2(\Rhat^d)$ can be  considered an element of $L^2(\Lambda).$
\end{defn}

\begin{rem}
Frames were first defined by Duffin and Schaeffer \cite{DufSch1952}, but appeared explicitly earlier in Paley and Wiener's book \cite{PalWie1934}, page 115. See Christensen's book \cite{chri2003} and Kova\v{c}evi\'{c} and Chebira's articles \cite{KovChe2007a}, \cite{KovChe2007b} for recent expositions of theory and applications. If  $\{x_n\}_{n \in Z} \subseteq H$ is a frame, then there is a topological isomorphism $S : H \longrightarrow \ell^2(Z)$ such that
\begin{equation}
\label{eq:iso}
\forall x \in H, \quad x = \sum_{n \in \ZZ} \inner{x}{S^{-1}(x_n)}x_n = \sum_{n \in \ZZ} \inner{x}{x_n}S^{-1}(x_n).
\end{equation}
Equation (\ref{eq:iso}) illustrates the natural role that frames play in studying non-uniform sampling formulas (\ref{eq:sampling}), see Example \ref{ex:1}.

\end{rem}

Beurling introduced the following definition in his 1959-1960 lectures.
\begin{defn}(Balayage)
Let $E \subseteq \RR^d$ and $\Lambda \subseteq \Rhat^d$ be closed sets. \emph{Balayage} is possible for $(E, \Lambda) \subseteq \RR^d \times \Rhat^d$ if
\begin{equation*}
\forall \mu \in M_b(\RR^d), \mbox{ }\exists \nu \in M_b(E)  \mbox{ such that } \widehat{\mu} = \widehat{\nu} \mbox{ on } \Lambda .
\end{equation*}
\end{defn}

\begin{rem}
%\begin{enumerate}[(a)]
{\it a.} The set $\Lambda$ is a collection of group characters in analogy to the Newtonian potential theoretic setting, e.g., \cite{beur1989}, pages 341-350, \cite{land1967}.

{\it b.} The notion of balayage in potential theory is due to Christoffel (1871), e.g., see the remarkable book \cite{ButFeh1981}, edited by Butzer and Feh\'{e}r, and the article therein by Brelot. Then, Poincar\'{e} (1890 and 1899) used the idea of balayage as a method of solving the Dirichlet problem for the Laplace equation. Letting $D \subseteq \RR^d$, $d\geq 3$, be a bounded domain, a balayage or sweeping of the measure $\mu = \delta_y$, $y \in D$, to $\partial D$ is a measure $\nu_y \in M_b(\partial D)$ whose Newtonian potential coincides outside of D with the Newtonian potential of $\delta_y$. In fact, $\nu_y$ is unique and is the harmonic measure on $\partial D$ for $y \in D$, e.g., \cite{kell1929}, \cite{dela1949}.

One then formulates a more general balayage problem: for a given mass distribution $\mu$ inside a closed bounded domain $\overline{D} \subseteq \RR^d$, find a mass distribution $\nu$ on $\partial D$ such that the potentials are equal outside $\overline{D}$ \cite{land1972}, cf. \cite{AdaHed1999}.

Let $\Lambda \subseteq \Rhat^d$ be a closed set. Define $$\mathcal{C}(\Lambda) = \{f \in C_{b}(\RR^d) : \supp(\widehat{f}) \subseteq \Lambda\},$$
cf. the role of $\mathcal{C}(\Lambda)$ in \cite{shap1972}.
%\end{enumerate}
\end{rem}

\begin{defn}(Spectral synthesis)
A closed set $\Lambda \subseteq \Rhat^d$ is a set of \emph{spectral synthesis (S-set)} if
\begin{equation}
\label{eq:sset}
\forall f \in \mathcal{C}(\Lambda) \text{ and } \forall \mu \in M_b(\mathbb{R}^d), \quad \widehat{\mu} = 0 \text{ on } \Lambda \Rightarrow \int f \,d\mu = 0,
\end{equation}
see \cite{bene1975}.
\end{defn}

\begin{rem}
%\begin{enumerate}[(a)]
{\it a.} The problem of characterizing S-sets emanated from Wiener's Tauberian theorem ideas, and was developed by Beurling in the 1940s. It is ``synthesis'' in that one wishes to approximate $f \in L^{\infty}(\RR^d)$ in the $\sigma(L^\infty (\RR^d), L^1 (\RR^d))$ (weak-$\ast$) topology by finite sums of characters $\gamma: L^\infty (\RR^d) \rightarrow \CC$, where $\gamma$ can be considered an element of $\Rhat^d$ and where $\supp (\delta_\gamma ) \subseteq \supp(\widehat{f})$, which is the so-called spectrum of $f$. Such an approximation is elementary to achieve by convolutions of the measures $\delta_\gamma$, but in this case we lose the essential property that the spectra of the approximants be contained in the spectrum of $f$. It is a fascinating problem whose complete resolution is equivalent to the characterization of the ideal structure of $L^1(\RR^d)$, a veritable Nullstellensatz of harmonic analysis.

{\it b.} We obtain the annihilation property of (\ref{eq:sset}) in the case that $f$ and $\mu$ have balancing smoothness and irregularity. For example, if $\widehat{f} \in D'(\Rhat^d),\,\widehat{\mu} = \phi \in C_c^\infty (\Rhat^d)$, and  $\phi = 0$ on $\supp (\widehat{f})$, then $\widehat{f}(\phi) = 0$, where $\widehat{f}(\phi)$ is sometimes written $\inner{\widehat{f}}{\phi}$. The sphere $S^2 \subseteq \Rhat^3$ is not an S-set (Laurent Schwartz, 1947), and every non-discrete locally compact abelian group $\widehat{G}$, e.g., $\Rhat^d$, contains non-S-sets (Paul Malliavin 1959). On the other hand, polyhedra are S-sets, whereas the 1/3-Cantor set is an S-set with non-S-subsets. We refer to \cite{bene1975} for an exposition of the theory.
%\end{enumerate}
\end{rem}

\begin{defn}(Strict multiplicity)
A closed set $\Gamma \subseteq \Rhat^d$ is a set of \emph{strict multiplicity} if
\begin{equation*}
\exists \mu \in M_b(\Gamma)\setminus\{0\} \mbox{ such that } \lim_{\norm{x} \to \infty} |\mu^\vee (x) | = 0.
\end{equation*}
\end{defn}

\begin{rem}
The study of sets of strict multiplicity has its origins in Riemann's theory of sets of uniqueness for trigonometric series, see \cite{bary1964}, \cite{zygm1968}. An early, important, and difficult result is due to Menchov (1916):
\begin{equation*}
\exists \Gamma \subseteq \Rhat / \ZZ \mbox{ and } \exists \mu \in M_b(\Gamma) \setminus \{0\} \mbox{ such that } |\Gamma| = 0 \mbox{ and } \mu^\vee (n) = O((\log |n|)^{-1/2}), |n| \rightarrow \infty.
\end{equation*}
($|\Gamma|$ is the Lebesgue measure of $\Gamma$.) There are refinements of Menchov's result, aimed at increasing the rate of decrease, due to Bary (1927), Littlewood (1936), Salem (1942, 1950), and Iva\v{s}ev-Mucatov (1952, 1956).
\end{rem}

\subsection{Results of Beurling}
The results in this subsection stem from 1959-1960, and the proofs are sometimes sophisticated, see \cite{beur1989}, pages 341-350. Throughout, $E \subseteq \RR^d$ is closed and $\Lambda \subseteq \Rhat^d$ is compact. The following is a consequence of the open mapping theorem.
\begin{prop}\label{prop:110}
 Assume balayage is possible for $(E, \Lambda)$. Then
\begin{equation*}
 \exists K >0 \text{ such that } \forall \mu \in M_b(\RR^d) , \, \inf \{ \norm{\nu}_1 : \nu \in M_b(E) \text{ and } \widehat{\nu} = \widehat{\mu} \text{ on } \Lambda \} \leq K \norm{\mu}_1 .
\end{equation*}
($\norm{\ldots}_1$ designates the total variation norm.)
\end{prop}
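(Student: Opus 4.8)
The plan is to recognize the claimed estimate as the standard ``bounded solvability'' consequence of the open mapping theorem, applied to a suitable quotient of $M_b(\RR^d)$. First I would record that the Fourier transform is norm-decreasing: since $|\widehat{\mu}(\gamma)| \leq \norm{\mu}_1$ for every $\gamma$, the linear map $\mu \mapsto \widehat{\mu}|_\Lambda$ from $M_b(\RR^d)$ (with the total variation norm) into $C_b(\Lambda)$ (with the sup norm) is bounded. Consequently its kernel $M = \{\mu \in M_b(\RR^d) : \widehat{\mu} = 0 \text{ on } \Lambda\}$ is a closed subspace, and the quotient $Z = M_b(\RR^d)/M$ is a Banach space whose norm is $\norm{[\mu]}_Z = \inf\{\norm{\mu'}_1 : \mu' \in M_b(\RR^d),\ \widehat{\mu'} = \widehat{\mu} \text{ on } \Lambda\}$; in particular $\norm{[\mu]}_Z \leq \norm{\mu}_1$.

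Next I would view $M_b(E)$ as a closed subspace of $M_b(\RR^d)$, hence itself a Banach space, and define the bounded linear map $\Psi : M_b(E) \to Z$ by $\Psi(\nu) = [\nu]_Z$, which satisfies $\norm{\Psi\nu}_Z \leq \norm{\nu}_1$. The entire force of the balayage hypothesis is that $\Psi$ is \emph{surjective}: given any class $z = [\mu]_Z \in Z$, balayage produces $\nu \in M_b(E)$ with $\widehat{\nu} = \widehat{\mu}$ on $\Lambda$, i.e. $\Psi(\nu) = [\nu]_Z = [\mu]_Z = z$. Since $M_b(E)$ and $Z$ are both Banach spaces, the open mapping theorem applies to the bounded surjection $\Psi$.

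I would then invoke the usual corollary of the open mapping theorem: for a bounded surjective linear map between Banach spaces there is a constant $K > 0$ such that every $z$ in the target admits a preimage $\nu$ with $\Psi(\nu) = z$ and $\norm{\nu}_1 \leq K\norm{z}_Z$. (This follows by choosing $\delta > 0$ with the open ball $B_Z(0,\delta)$ contained in the image of the open unit ball of $M_b(E)$, and then rescaling an arbitrary $z$.) Translating back, given $\mu \in M_b(\RR^d)$ I set $z = [\mu]_Z$, so that $\norm{z}_Z \leq \norm{\mu}_1$; the corollary yields $\nu \in M_b(E)$ with $\Psi(\nu) = [\mu]_Z$ --- equivalently $\widehat{\nu} = \widehat{\mu}$ on $\Lambda$ --- and $\norm{\nu}_1 \leq K\norm{z}_Z \leq K\norm{\mu}_1$. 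Hence the infimum in the statement, taken over all competing $\nu$, is at most this single $\norm{\nu}_1 \leq K\norm{\mu}_1$, which is exactly the assertion, with the constant $K$ independent of $\mu$.

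The only step that is more than purely formal is the setup: one must isolate the quotient $Z$ so that the balayage condition, which quantifies only over each fixed $\mu$ separately, is converted into surjectivity of one fixed map between genuine Banach spaces. Once that identification is made (and the completeness of $Z$ checked via closedness of $M$), the uniformity of $K$ is automatic, since it is produced by a single application of the open mapping theorem rather than by any $\mu$-by-$\mu$ construction.
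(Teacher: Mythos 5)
Your proof is correct and is precisely the argument the paper intends: the paper justifies this proposition only with the remark that it is ``a consequence of the open mapping theorem,'' and your quotient construction $Z = M_b(\RR^d)/M$ together with the surjection $\Psi : M_b(E) \to Z$ is the standard way to realize that remark, with balayage supplying surjectivity and the open mapping theorem supplying the uniform constant $K$. No gaps; the closedness of $M_b(E)$ (using that $E$ is closed) and of $M$ are exactly the points that needed checking, and you checked them.
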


The smallest such $K$ is denoted by $K(E, \Lambda)$, and we say that balayage is not possible if $K(E,\Lambda) = \infty$. In fact, \emph{if $\Lambda$ is a set of strict multiplicity, then balayage is possible for $(E,\Lambda)$ if and only if} $K(E, \Lambda) < \infty$, e.g., see Lemma 1 of \cite{beur1989}, pages 341-350. Let $J(E, \Lambda)$ be the smallest $J \geq 0$ such that
\begin{equation*}
 \forall f \in \mathcal{C}(\Lambda) \text{, } \sup_{x \in \RR^d} |f(x)| \leq J \sup_{x \in E} |f(x)|.
\end{equation*}
$J(E, \Lambda)$ could be $\infty$.

The Riesz representation theorem is used to prove the following result. Part \emph{c} is a consequence of parts \emph{a} and \emph{b}.

\begin{prop}
 %\begin{enumerate}[(a)]
 {\it a.} If $\Lambda$ is a set of strict multiplicity, then $K(E, \Lambda) \leq J(E, \Lambda)$.

 {\it b.} If $\Lambda$ is an S-set, then $J(E,\Lambda) \leq K(E,\Lambda)$.

 {\it c.} Assume that $\Lambda$ is
   an S-set of strict multiplicity and that balayage is possible for $(E, \Lambda)$. If $f \in \mathcal{C}(\Lambda)$ and $f = 0$ on $E$, then $f$ is identically $0$.
 %\end{enumerate}

\end{prop}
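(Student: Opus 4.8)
The plan is to treat the two inequalities a and b by a duality between the uniform norm on $\mathcal C(\Lambda)$ and total-variation norms of measures, and then to read off c. I would begin with b, the elementary direction. Assume $\Lambda$ is an S-set; we may assume $K=K(E,\Lambda)<\infty$, since otherwise there is nothing to prove. Fix $f\in\mathcal C(\Lambda)$ and a point $x_0\in\RR^d$, and apply balayage to the unit mass $\mu=\delta_{x_0}$: by Proposition \ref{prop:110} and the definition of $K$, for every $\epsilon>0$ there is $\nu\in M_b(E)$ with $\widehat\nu=\widehat{\delta_{x_0}}$ on $\Lambda$ and $\norm{\nu}_1\le K+\epsilon$. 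The measure $\sigma=\delta_{x_0}-\nu$ lies in $M_b(\RR^d)$ and satisfies $\widehat\sigma=0$ on $\Lambda$, so the S-set property applied to $f$ and $\sigma$ gives $\int f\,d\sigma=0$, that is $f(x_0)=\int_E f\,d\nu$. Hence $|f(x_0)|\le\norm{\nu}_1\sup_{x\in E}|f(x)|\le(K+\epsilon)\sup_{x\in E}|f(x)|$; letting $\epsilon\to0$ and taking the supremum over $x_0$ yields $\sup_{\RR^d}|f|\le K\sup_E|f|$, which is $J(E,\Lambda)\le K(E,\Lambda)$.

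For a, assume $\Lambda$ is a set of strict multiplicity; we may assume $J=J(E,\Lambda)<\infty$. The idea is to manufacture the balayage measure as the Riesz representation of a functional built from $\mu$. Fix $\mu\in M_b(\RR^d)$ and set $L(f)=\int f\,d\mu$ on $\mathcal C(\Lambda)$. Since $J<\infty$, $\sup_{\RR^d}|f|\le J\sup_E|f|$ for all $f\in\mathcal C(\Lambda)$; in particular the restriction $f\mapsto f|_E$ is injective, so $\ell(f|_E):=L(f)$ is a well-defined linear functional on the subspace $A=\{f|_E:f\in\mathcal C(\Lambda)\}$ of the uniform-norm space $C_b(E)$, and $|\ell(f|_E)|\le\norm{\mu}_1\sup_{\RR^d}|f|\le J\norm{\mu}_1\sup_E|f|$, so $\norm{\ell}\le J\norm{\mu}_1$. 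I would then extend $\ell$ by the Hahn--Banach theorem and represent the extension, via the Riesz representation theorem, by a measure $\nu\in M_b(E)$ with $\norm{\nu}_1\le J\norm{\mu}_1$ and $\int g\,d\nu=\ell(g)$ for $g\in A$. By construction $\int f\,d\nu=\int f\,d\mu$ for every $f\in\mathcal C(\Lambda)$; testing against the characters $x\mapsto e^{2\pi i x\cdot\gamma}$ lying in $\mathcal C(\Lambda)$ recovers $\widehat\nu=\widehat\mu$ on $\Lambda$. Thus balayage is possible and the minimal-norm balayage of $\mu$ has total variation at most $J\norm{\mu}_1$, giving $K(E,\Lambda)\le J(E,\Lambda)$.

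The main obstacle sits in this last construction. The restrictions $f|_E$ need not vanish at infinity (the characters themselves do not), so $A$ is a subspace of $C_b(E)$ rather than of $C_0(E)$, and the Hahn--Banach extension a priori only produces an element of $C_b(E)^{*}$, which need not be a countably additive measure concentrated on $E$. Promoting it to a genuine $\nu\in M_b(E)=C_0(E)^{*}$ is precisely the point at which the hypothesis that $\Lambda$ be a set of strict multiplicity is used: this is the same hypothesis under which $K(E,\Lambda)<\infty$ is equivalent to balayage being possible (Lemma 1 of \cite{beur1989}, pages 341--350). Concretely, I would realize $\ell$ as a weak-$*$ limit of honest measures $\nu_n\in M_b(E)$ with $\norm{\nu_n}_1\le J\norm{\mu}_1+1/n$, and use a nonzero $\mu_0\in M_b(\Lambda)$ with $\mu_0^\vee\in C_0(\RR^d)$, furnished by strict multiplicity, to preclude escape of mass to infinity and to certify that the limiting functional is represented by a bounded Radon measure on $E$; alternatively one simply quotes Beurling's Lemma 1. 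I expect this measure-theoretic upgrade, not the norm bookkeeping, to be the crux.

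Finally, c follows by combining a and b. Under its hypotheses $\Lambda$ is simultaneously an S-set and a set of strict multiplicity and balayage is possible, so a and b give $J(E,\Lambda)=K(E,\Lambda)$, while the strict-multiplicity criterion (balayage possible $\iff K<\infty$) forces $K(E,\Lambda)<\infty$, hence $J(E,\Lambda)<\infty$. Then the defining inequality of $J$ reads $\sup_{\RR^d}|f|\le J\sup_E|f|$ for all $f\in\mathcal C(\Lambda)$; if $f\in\mathcal C(\Lambda)$ vanishes on $E$ the right-hand side is $0$, whence $f\equiv0$.
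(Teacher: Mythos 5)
Your parts \emph{b} and \emph{c} are sound. In \emph{b}, the reduction to $K=K(E,\Lambda)<\infty$, the choice of a near-minimal balayage measure $\nu$ of $\delta_{x_0}$, and the application of the S-set annihilation property to $\sigma=\delta_{x_0}-\nu$ to get $f(x_0)=\int_E f\,d\nu$ is exactly the right use of spectral synthesis, and the norm bookkeeping is correct. Your \emph{c} is also fine; in fact it is slightly over-engineered, since it needs only part \emph{b} together with $K(E,\Lambda)<\infty$ (which already follows from Proposition \ref{prop:110} once balayage is assumed possible), not the full equality $J=K$. Note that the paper itself gives no proof of this proposition beyond the remarks that the Riesz representation theorem is used and that \emph{c} follows from \emph{a} and \emph{b}, so your argument has to stand on its own.

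Part \emph{a}, however, contains a genuine gap --- precisely the one you yourself flag as the crux, and neither of your proposed repairs closes it. The Hahn--Banach extension of $\ell$ lives in $C_b(E)^{*}$, a space of finitely additive objects, and since the functions $f|_E$ (in particular the characters) do not vanish at infinity, nothing in your construction produces a countably additive $\nu\in M_b(E)=C_0(E)^{*}$; moreover, weak-$*$ limits in $M_b(E)$ do not preserve the conditions $\int e_{-\gamma}\,d\nu_n=\widehat{\mu}(\gamma)$, exactly because $e_{-\gamma}|_E\notin C_0(E)$. Your repair (i) --- ``realize $\ell$ as a weak-$*$ limit of honest measures $\nu_n\in M_b(E)$ with $\norm{\nu_n}_1\le J\norm{\mu}_1+1/n$'' --- is circular: the existence of measures on $E$ of controlled total variation implementing (approximate) balayage of $\mu$ is essentially the inequality $K\le J$ you are trying to prove, and you offer no construction of the $\nu_n$. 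Your repair (ii) --- ``quote Beurling's Lemma 1'' --- is a non sequitur: as cited in the paper, that lemma says that under strict multiplicity, balayage is possible if and only if $K(E,\Lambda)<\infty$; it relates balayage to $K$, not to $J$, so it can neither upgrade your functional to a measure nor deduce $K<\infty$ from $J<\infty$ (that implication \emph{is} part \emph{a}). What is missing is the actual mechanism by which strict multiplicity enters: one must use a nonzero $\mu_0\in M_b(\Lambda)$ with $\mu_0^\vee\in C_0(\RR^d)$ to manufacture test functions on $E$ that \emph{do} vanish at infinity (e.g., restrictions of $(P\mu_0)^\vee$ for trigonometric polynomials $P$), run the duality and limiting argument against that class, and then show that equality of Fourier transforms is recovered on all of $\Lambda$ rather than merely $\mu_0$-almost everywhere; this is the substantive content of Beurling's ``sometimes sophisticated'' proof, and it is absent from your sketch. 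So \emph{b} and \emph{c} stand, but \emph{a} as written is a plan with its key step unexecuted, not a proof.
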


\begin{prop}
 Assume that $\Lambda$ is an S-set of strict multiplicity. Then, balayage is possible for $(E, \Lambda)$ $\Leftrightarrow$
\begin{equation*}
 \exists K(E, \Lambda) > 0 \text{ such that } \forall f \in \mathcal{C}(\Lambda), \quad \norm{f}_{\infty}\leq K(E,\Lambda) \sup_{x \in E}|f(x)|.
\end{equation*}

\end{prop}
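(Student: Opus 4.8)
The plan is to recognize that the displayed right-hand inequality is precisely the assertion that $J(E,\Lambda) < \infty$, since $\norm{f}_\infty = \sup_{x \in \RR^d}|f(x)|$ for $f \in \mathcal{C}(\Lambda) \subseteq C_b(\RR^d)$, and then to chain together the two preceding propositions so as to pass between finiteness of $J(E,\Lambda)$ and finiteness of $K(E,\Lambda)$, the latter being equivalent to the possibility of balayage. Because $\Lambda$ is assumed to be simultaneously an S-set and a set of strict multiplicity, parts \emph{a} and \emph{b} of the preceding proposition both apply, giving $K(E,\Lambda) \leq J(E,\Lambda)$ and $J(E,\Lambda) \leq K(E,\Lambda)$; in particular $K(E,\Lambda) = J(E,\Lambda)$, which is why a single constant may legitimately be written in the displayed estimate.

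For the forward implication, I would assume balayage is possible for $(E,\Lambda)$ and invoke Proposition \ref{prop:110} to obtain $K(E,\Lambda) < \infty$. Since $\Lambda$ is an S-set, part \emph{b} then yields $J(E,\Lambda) \leq K(E,\Lambda) < \infty$, and the finiteness of $J(E,\Lambda)$ is exactly the displayed inequality $\norm{f}_\infty \leq K(E,\Lambda)\sup_{x\in E}|f(x)|$ valid for every $f \in \mathcal{C}(\Lambda)$.

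For the reverse implication, I would assume the displayed inequality holds, i.e.\ $J(E,\Lambda) < \infty$. Since $\Lambda$ is a set of strict multiplicity, part \emph{a} gives $K(E,\Lambda) \leq J(E,\Lambda) < \infty$. The essential step is then to appeal to Beurling's equivalence (Lemma 1 of \cite{beur1989}, pages 341--350), already quoted in this subsection, which states that for a set of strict multiplicity $\Lambda$ balayage is possible for $(E,\Lambda)$ if and only if $K(E,\Lambda) < \infty$; this converts the finiteness of $K(E,\Lambda)$ into the possibility of balayage and closes the argument.

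The main subtlety worth flagging is that the two directions use the two hypotheses on $\Lambda$ asymmetrically: the S-set property is what lets balayage control the uniform estimate (through $J \leq K$), while the strict-multiplicity property is what lets the uniform estimate recover balayage (through $K \leq J$ together with the balayage-$\Leftrightarrow$-$(K<\infty)$ equivalence), so neither hypothesis can be dropped. Beyond the bookkeeping of chaining these inequalities, the only genuinely nontrivial input is that last equivalence of Beurling, and it is there — not in the present proposition — that the real analytic difficulty resides.
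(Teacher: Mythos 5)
Your proof is correct and is essentially the argument the paper intends: the paper states this proposition without a separate proof precisely because it is the assembly of the immediately preceding results — Proposition \ref{prop:110} plus the quoted equivalence (balayage possible $\Leftrightarrow K(E,\Lambda)<\infty$ for $\Lambda$ of strict multiplicity) together with parts \emph{a} and \emph{b} of the $J$--$K$ comparison, used asymmetrically exactly as you describe. Your observation that the two hypotheses on $\Lambda$ enter in opposite directions, and that $K(E,\Lambda)=J(E,\Lambda)$ under both hypotheses (justifying the single constant in the display), matches the paper's framework, so there is nothing to add.
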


The previous results are used in the intricate proof of Theorem \ref{theorem:balayage1}.

\begin{thm}\label{theorem:balayage1}
 Assume that $\Lambda$ is an S-set of strict multiplicity, and that balayage is possible for $(E, \Lambda)$ and therefore $K(E, \Lambda) < \infty$. Let $\Lambda_\epsilon = \{ \gamma \in \Rhat^d: \text{dist}\,(\gamma,\Lambda) \leq \epsilon \}$. Then,
\begin{equation*}
 \exists \, \epsilon_0 > 0 \text{ such that } \forall \, 0 < \epsilon < \epsilon_0 \text{, } K(E,\Lambda_\epsilon) < \infty,
\end{equation*}
i.e., balayage is possible for $(E, \Lambda_\epsilon)$.

\end{thm}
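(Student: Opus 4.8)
The plan is to reduce the theorem to a single sup-norm estimate for bandlimited functions and then to prove that estimate by a normal-families argument, using spectral synthesis (part \emph{c} of the proposition above) to annihilate the limit function. First I would make three reductions. For any closed set, $J(E,\Lambda_\epsilon)$ is nondecreasing in $\epsilon$, since $\mathcal{C}(\Lambda_{\epsilon'})\subseteq\mathcal{C}(\Lambda_\epsilon)$ whenever $\epsilon'<\epsilon$. Next, $\Lambda_\epsilon$ has nonempty interior, so placing a nonzero $\psi\,d\gamma$ with $\psi\in C_c^\infty$ inside a ball of $\Lambda_\epsilon$ gives a measure in $M_b(\Lambda_\epsilon)\setminus\{0\}$ whose inverse transform is Schwartz and hence vanishes at infinity; thus every $\Lambda_\epsilon$ is a set of strict multiplicity. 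By part \emph{a} of the proposition comparing $K$ and $J$, strict multiplicity yields $K(E,\Lambda_\epsilon)\le J(E,\Lambda_\epsilon)$, and by the equivalence quoted after Proposition \ref{prop:110}, finiteness of $K(E,\Lambda_\epsilon)$ is the same as balayage being possible for $(E,\Lambda_\epsilon)$. Hence it suffices to produce a single $\epsilon_0>0$ with $J(E,\Lambda_{\epsilon_0})<\infty$; monotonicity then gives $J(E,\Lambda_\epsilon)<\infty$, so $K(E,\Lambda_\epsilon)<\infty$, for all $0<\epsilon<\epsilon_0$. I also record the base estimate: since $\Lambda$ is an S-set and balayage is possible, parts \emph{a} and \emph{b} give $J(E,\Lambda)=K(E,\Lambda)<\infty$, i.e. $\norm{f}_\infty\le K(E,\Lambda)\sup_{x\in E}|f(x)|$ for all $f\in\mathcal{C}(\Lambda)$.

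The main step, $J(E,\Lambda_{\epsilon_0})<\infty$ for some $\epsilon_0$, I would prove by contradiction. If no such $\epsilon_0$ exists, then $J(E,\Lambda_{1/n})=\infty$ for every $n$, so there are $f_n\in\mathcal{C}(\Lambda_{1/n})$ with $\norm{f_n}_\infty=1$ and $\sup_{x\in E}|f_n(x)|\to 0$. All spectra lie in the fixed compact set $\Lambda_1$, so Bernstein's inequality gives one Lipschitz constant for the whole family; with the uniform bound $\norm{f_n}_\infty=1$, Arzel\`a--Ascoli and a diagonal argument produce a subsequence converging locally uniformly to a bounded continuous $f$ with $\norm{f}_\infty\le1$. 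Local uniform convergence plus the uniform bound forces $f_n\to f$ in $\mathcal{S}'(\RR^d)$, and since any $\phi\in C_c^\infty$ supported off $\Lambda$ is supported off $\Lambda_{1/n}$ for large $n$, we get $\supp(\widehat{f})\subseteq\Lambda$, i.e. $f\in\mathcal{C}(\Lambda)$; moreover $f=0$ on $E$ because $|f_n(x)|\to 0$ pointwise on $E$. If $f\not\equiv 0$, this already contradicts part \emph{c} of the proposition applied to $(E,\Lambda)$, and we are done.

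The hard part, and the real content of the theorem, is the escaping-mass case $f\equiv 0$, where the height of $f_n$ runs off to infinity. To handle it I would recenter: choose $x_n$ with $|f_n(x_n)|>1/2$ and set $h_n=f_n(\,\cdot+x_n)$, which still lies in $\mathcal{C}(\Lambda_{1/n})$ because translation only modulates $\widehat{f_n}$ and preserves its support. The same Arzel\`a--Ascoli argument yields a locally uniform limit $h\in\mathcal{C}(\Lambda)$ with $|h(0)|\ge 1/2$, hence $h\not\equiv 0$, while $\sup|h_n|\to 0$ on the translated sets $E-x_n$. The key auxiliary facts are that $J(\,\cdot\,,\Lambda)$ is translation invariant (replacing $f$ by $f(\,\cdot-x_n)$ shows $J(E-x_n,\Lambda)=J(E,\Lambda)$) and that $J(E,\Lambda)<\infty$ forces $E$ to be relatively dense (otherwise a nonzero element of $\mathcal{C}(\Lambda)$ vanishing at infinity---available because $\Lambda$ is a set of strict multiplicity---could be translated into an arbitrarily large hole of $E$, violating the base estimate). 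Passing to a Fell-limit set $E_\infty$ of $\{E-x_n\}$ along a further subsequence, $E_\infty$ is nonempty and relatively dense, $h=0$ on $E_\infty$, and the translation-invariant bound should be inherited to give $J(E_\infty,\Lambda)\le J(E,\Lambda)<\infty$; granting this, balayage is possible for $(E_\infty,\Lambda)$, so part \emph{c} forces $h\equiv 0$, contradicting $|h(0)|\ge1/2$.

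I expect the inheritance $J(E_\infty,\Lambda)\le J(E,\Lambda)$ to be the main obstacle. The delicate point is precisely that functions in $\mathcal{C}(\Lambda)$ need not decay, so for a fixed $g\in\mathcal{C}(\Lambda)$ the supremum of $|g|$ over $E-x_n$ may be realized at points escaping to infinity, and a naive passage to the limit can lose it. Controlling this---via the uniform Lipschitz bound from Bernstein's inequality, relative density of the translates (with a common radius), and a careful choice of the Fell limit so that near-maximizers stay in a compact region---is where the argument must do its real work, and is the step I would develop most carefully.
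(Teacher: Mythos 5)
First, a point of reference: the paper itself does not prove Theorem \ref{theorem:balayage1}. It quotes the result from Beurling (\cite{beur1989}, pages 341--350), remarking only that the preceding propositions comparing $J(E,\Lambda)$ and $K(E,\Lambda)$ ``are used in the intricate proof.'' So your attempt must be judged on its own merits rather than against a proof in the text. Your reductions are correct and efficiently organized: $J(E,\Lambda_\epsilon)$ is nondecreasing in $\epsilon$; every $\Lambda_\epsilon$ has nonempty interior and is therefore a set of strict multiplicity; hence $K(E,\Lambda_\epsilon)\le J(E,\Lambda_\epsilon)$, and, again by strict multiplicity, finiteness of $K(E,\Lambda_\epsilon)$ is equivalent to balayage for $(E,\Lambda_\epsilon)$ --- so it does suffice to produce one $\epsilon_0$ with $J(E,\Lambda_{\epsilon_0})<\infty$. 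The normal-families step (Bernstein plus Arzel\`a--Ascoli, spectra shrinking to $\Lambda$, limit vanishing on $E$) is also sound, and it correctly disposes of the case in which the limit function is not identically zero, via part \emph{c} of the $J$--$K$ proposition.

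However, the escaping-mass case contains a genuine gap, and it is not the kind of point that Lipschitz bounds, relative density, or a careful choice of the Fell limit can close. Everything hinges on transferring the hypothesis to the limit set: you need either $J(E_\infty,\Lambda)\le J(E,\Lambda)$ or, what is all you actually use, that $E_\infty$ is a set of uniqueness for $\mathcal{C}(\Lambda)$. The obstruction is structural, not technical. The hypothesis $\norm{g}_\infty\le J(E,\Lambda)\,\sup_{E-x_n}|g|$ is a \emph{global} statement, while the Fell limit $E_\infty$ records $E-x_n$ only \emph{locally}; since elements of $\mathcal{C}(\Lambda)$ need not decay (they may be almost periodic), the near-maximizers that the hypothesis produces on $E-x_n$ can escape every compact set, and then the fact that $g=0$ on $E_\infty$ says nothing about $\sup_{E-x_n}|g|$. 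The standard device for forcing near-maximizers into a bounded region is to multiply $g$ by a translate of Ingham's function $h$, so that the product decays; but the product has spectrum in $\Lambda_\epsilon$ rather than $\Lambda$, so estimating it on $E-x_n$ requires a sampling inequality for $\mathcal{C}(\Lambda_\epsilon)$ --- precisely the statement being proven. Put differently, the implication you need (balayage/sampling for $(E,\Lambda)$ implies every weak limit of translates of $E$ is a uniqueness set for $\mathcal{C}(\Lambda)$) is the hard direction of Beurling's weak-limit characterization of sampling, and its known proofs pass through the $\epsilon$-enlargement theorem, i.e., through Theorem \ref{theorem:balayage1} itself; the converse direction (all weak limits are uniqueness sets implies sampling) is the easy one, and it is the one your normal-families argument actually establishes. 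As it stands, the proposal is circular at its decisive step, and completing it would require an independent argument of essentially the same depth as Beurling's original proof.
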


The following result for $\RR^d$ is not explicitly stated in \cite{beur1989}, pages 341-350,
but it goes back to his 1959-1960 lectures, see \cite{wu1998}, Theorem E in \cite{land1967},
Landau's comment on its origins \cite{land2011}, and Example \ref{ex:fouierframebalayage}.
In fact, using Theorem \ref{theorem:balayage1} and Ingham's theorem (Theorem \ref{thm:balayage3}),
Beurling obtained Theorem \ref{theorem:balayage2}.  We have chosen to state Ingham's
theorem (Theorem \ref{thm:balayage3}) in Section 2 as a basic step in the proof of
Theorem \ref{thm:balayage4}, which supposes Theorem \ref{theorem:balayage1} and which
 we chose to highlight as \textit{A fundamental identity of balayage} and in terms of its
quantitative conclusion, (\ref{eq:JB6}) and (\ref{eq:JB7}).  In fact, Theorem \ref{thm:balayage4}
essentially yields Theorem \ref{theorem:balayage2}, see Example \ref{ex:fouierframebalayage}.

\begin{defn}
 A sequence $E \subseteq \RR^d$ is \emph{separated} if $$\exists \, r >0 \text{ such that } \inf \{\norm{x-y}: x, y \in E \text{ and } x \neq y \} \geq r.$$
\end{defn}

\begin{thm}\label{theorem:balayage2}
 Assume that $\Lambda \subseteq \Rhat^d$ is an S-set of strict multiplicity and that
$E \subseteq \RR^d$ is a separated sequence. If balayage is possible for $(E,\Lambda)$,
then $\mathcal{E}(E)$ is a Fourier frame for $L^2(\Lambda)$, i.e., $\{(e_{-x} \
\mathbb{1}_{\Lambda})^\vee: x \in E \}$ is a Fourier frame for $PW_{\Lambda}$.
\end{thm}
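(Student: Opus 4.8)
The plan is to recast the two frame inequalities as a single two-sided sampling inequality on the Paley--Wiener space and then treat the upper (Bessel) bound and the lower bound by entirely separate mechanisms. Fix $F \in L^2(\Lambda)$ and set $f = F^\vee \in PW_\Lambda$. As recorded after the definition of a Fourier frame, $\inner{F}{e_{-x}} = f(x)$, so the frame condition for $\mathcal{E}(E)$ is exactly the assertion that there exist $A, B > 0$ with
\[
A\,\norm{f}_2^2 \;\le\; \sum_{x \in E} |f(x)|^2 \;\le\; B\,\norm{f}_2^2 \qquad \text{for every } f \in PW_\Lambda .
\]
I would establish the two estimates independently, the upper one using only the geometry of $E$ and the lower one using the full strength of balayage.

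For the upper bound I would argue that separation of $E$ together with band-limitation to the compact set $\Lambda$ forces a Plancherel--P\'olya (Bessel) inequality. Choosing $\psi \in \mathcal{S}(\RR^d)$ with $\widehat{\psi} = 1$ on $\Lambda$ gives $f = f * \psi$ for all $f \in PW_\Lambda$, from which a Bernstein-type local estimate $|f(x)|^2 \le C_\Lambda \int_{B(x, r/2)} |f(y)|^2\, dy$ follows, where $r$ is the separation constant of $E$. Since the balls $\{B(x, r/2): x \in E\}$ are pairwise disjoint, summing over $x \in E$ yields $\sum_{x\in E}|f(x)|^2 \le C_\Lambda \norm{f}_2^2$. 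This step is routine and is not where the difficulty lies.

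The lower bound is the heart of the matter, and it is here that the hypotheses that $\Lambda$ be an S-set of strict multiplicity and that balayage be possible are used. Dualizing, the lower estimate is equivalent to surjectivity of the synthesis map $(c_x)_{x\in E} \mapsto \widehat{\nu}\,\mathbb{1}_{\Lambda}$, where $\nu = \sum_{x} c_x \delta_x \in M_b(E)$; its adjoint is the analysis map $F \mapsto (f(x))_{x\in E}$, so surjectivity of synthesis is precisely the lower frame bound. Balayage supplies, for every $\mu \in M_b(\RR^d)$, a $\nu \in M_b(E)$ with $\widehat{\nu} = \widehat{\mu}$ on $\Lambda$, and since $\{\widehat{\mu}\,\mathbb{1}_{\Lambda} : \mu \in M_b(\RR^d)\}$ is dense in $L^2(\Lambda)$, the range of the synthesis map is dense; the proposition characterizing balayage through $\norm{g}_\infty \le K(E,\Lambda)\sup_{x\in E}|g(x)|$ for $g \in \mathcal{C}(\Lambda)$ additionally yields injectivity. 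The genuine obstacle is that Proposition \ref{prop:110} bounds the representing measure $\nu$ only in total variation, that is, in the $\ell^1$ norm of $(c_x)$, whereas a frame bound is an $\ell^2$ statement, and dense range together with boundedness does not force surjectivity. To bridge this gap I would pass from $\Lambda$ to a thickened S-set $\Lambda_\epsilon$ for which balayage still holds, invoking Theorem \ref{theorem:balayage1}, and introduce an Ingham-type auxiliary function $h$ whose Fourier transform is nonnegative and supported in the ball $B(0,\epsilon)$. Because $f$ has spectrum in $\Lambda$, the auxiliary measures built from $h$ and the samples $\{f(x)\}$ have spectra in $\Lambda + B(0,\epsilon) \subseteq \Lambda_\epsilon$, which is exactly why the thickening is needed; applying balayage for $(E,\Lambda_\epsilon)$ together with the positivity of $\widehat{h}$ then produces a nonnegative quadratic form in the samples comparable to $\norm{f}_2^2$, upgrading the total-variation control to the $\ell^2$ lower bound with an explicit $A$ depending on $K(E,\Lambda_\epsilon)$ and the Ingham constants. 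I expect this upgrade from the $\ell^1$/total-variation estimate furnished by balayage to the $\ell^2$ frame inequality, mediated by the thickening and the positivity of $\widehat{h}$, to be the main obstacle.
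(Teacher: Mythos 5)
Your reduction of the frame condition to the two-sided sampling inequality and your treatment of the upper bound are both sound: separation of $E$ plus compactness of $\Lambda$ gives the Plancherel--P\'olya (Bessel) inequality exactly as you describe, and this is also how the paper disposes of that half (it simply remarks that it suffices to prove the lower bound). Your diagnosis of the lower bound is accurate as far as it goes: balayage by itself is an $\ell^1$ (total-variation) statement, dense range of the synthesis map is not surjectivity, and some upgrade is required.

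The gap is that your final paragraph asserts that upgrade rather than performing it. The claim that balayage for $(E,\Lambda_\epsilon)$ ``together with the positivity of $\widehat{h}$ \ldots produces a nonnegative quadratic form in the samples comparable to $\norm{f}_2^2$'' is a restatement of the lower frame bound, not an argument: no quadratic form is ever defined, and positivity of $\widehat{h}$ is not the operative mechanism (Ingham's Theorem \ref{thm:balayage3} does not even supply $\widehat{h}\geq 0$; only $h(0)=1$, the support condition on $\widehat h$, and decay of $h$ are available and needed). The missing idea is the fundamental identity of balayage, Theorem \ref{thm:balayage4}: one applies balayage on $\Lambda_\epsilon$ not to measures built from $h$ and the samples, but to each Dirac mass $\delta_y$, obtaining coefficients $a_x(y)$ with the $y$-uniform bound \eqref{eq:JB7}; one then forms $\eta_y = h(\cdot-y)\bigl(\delta_y - \sum_{x\in E}a_x(y)\delta_x\bigr)$, whose Fourier transform vanishes on $\Lambda$ because $(\delta_y-\sum_x a_x(y)\delta_x)^{\wedge}$ vanishes on $\Lambda_\epsilon$ while ${\rm supp}(\widehat h)\subseteq \overline{B(0,\epsilon)}$ (this is the real reason the thickening is needed); and finally the S-set property of $\Lambda$ converts $\widehat{\eta_y}=0$ on $\Lambda$ into the pointwise identity $e^{2\pi i y\cdot\gamma} = \sum_{x\in E} a_x(y)h(x-y)e^{2\pi i x\cdot\gamma}$ on $\Lambda$, i.e., \eqref{eq:JB6}. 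Spectral synthesis is entirely absent from your mechanism, yet it is the step that turns balayage (an equality of Fourier transforms on $\Lambda_\epsilon$) into an identity usable against functions with spectrum in $\Lambda$. Once \eqref{eq:JB6}--\eqref{eq:JB7} are available, the $\ell^1$-to-$\ell^2$ upgrade you flag as the main obstacle is accomplished by substituting the identity into $\norm{F}_{L^2(\Lambda)}^2 = \int_\Lambda \overline{F(\lambda)}\bigl(\int f(y)e^{-2\pi i y\cdot\lambda}\,dy\bigr)\,d\lambda$, which gives $\norm{F}^2 = \sum_{x\in E}\overline{f(x)}\int a_x(y)h(x-y)f(y)\,dy$, and then applying Cauchy--Schwarz twice --- once over the sum and once inside each integral with the splitting $|a_x(y)| = |a_x(y)|^{1/2}|a_x(y)|^{1/2}$, a Schur-test argument exploiting the uniformity in $y$ of $\sum_x|a_x(y)|\leq K(E,\Lambda_\epsilon)$. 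This yields $A = 1/\bigl(C\norm{h}_2^2K(E,\Lambda_\epsilon)\bigr)$ and is precisely Example \ref{ex:fouierframebalayage} in the paper. Incidentally, your duality framing can be completed --- this is Landau's argument in Section \ref{sec:landau}, where $k\mapsto k_x=\int a_x(y)h(x-y)k(y)\,dy$ furnishes $\ell^2$-controlled coefficients representing $\widehat k$ on $\Lambda$ --- but its input is again the identity \eqref{eq:JB6} and the same Schur-type estimate, not positivity of $\widehat h$.
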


\begin{example}\label{ex:1}
% Theorem \ref{theorem:balayage2} is conveniently formulated in terms of the Paley-Wiener space, $$PW_\Lambda = \{f \in L^2(\RR^d): \supp(\widehat{f}) \subseteq \Lambda \}.$$ Let $f_x = (e_{-x}\mbbu_{\Lambda})^\vee \in PW_{\Lambda}$ define the Bessel map $L$ taking $f \in PW_\Lambda$ to $L(f) = \{\inner{f}{f_x}\}_{x \in E}$ for a given separated subset $E \subseteq \RR^d$. Set $S = L^\ast L$, where $L^\ast$ is the adjoint of $L$, and $S$ is the frame operator map $PW_\Lambda \rightarrow PW_\Lambda$. In Theorem \ref{theorem:balayage2}, Beurling proved that $S$ is a topological isomorphism by proving that
%$$\exists A,B >0\text{, }\forall f \in PW_\Lambda , \quad A\norm{f}_{2}^2 \leq \sum_{x \in E} |f(x)|^2 \leq B \norm{f}_{2}^2,$$
%see Example \ref{ex:fouierframebalayage}. Thus, because of (\ref{eq:iso}), we have
The conclusion of Theorem \ref{theorem:balayage2} is the assertion
$$
\forall f \in PW_\Lambda , \quad f = \sum_{x \in E}f(x)S^{-1}(f_x) = \sum_{x \in E}\inner{f}{S^{-1}(f_x)}f_x,
$$
where
$$
f_x(y) = (e_{-x} \ \mathbb{1}_{\Lambda} )^\vee(\gamma)
$$
and
$$
 S(f) = \sum_{x \in E} f(x) (e_{-x} \ \mathbb{1} )^\vee,
$$
cf. (\ref{eq:sampling}) and (\ref{eq:iso}). Clearly, $f_x$ is a type of sinc function. Smooth sampling functions can be introduced into this setup, e.g., Theorem 7.45 of \cite{BenFra1994}, Chapter 7.
\end{example}

\begin{rem}
 Theorem \ref{theorem:balayage2} and results in \cite{beur1966} led to the Beurling covering theorem, see Section \ref{sec:covering}.
\end{rem}

\subsection{Outline}
Now that we have described the background and recalled the required definitions from harmonic
analysis and Beurling's fundamental theorems, we proceed to Section \ref{sec:balayageidentity}, where we state
a basic theorem due to Ingham, as well as what we have called Beurling's fundamental identity of
balayage. This result is a powerful technical tool that we use throughout. 

In Section  \ref{sec:stft}, we prove two theorems,
that are the basis for our frame theoretic non-uniform sampling theory for the Short Time Fourier
Transform (STFT). The second of these theorems, Theorem \ref{thm:stft-frame}, is compared with an
earlier result of Gr{\"o}chenig, that itself goes back to work of Feichtinger and Gr{\"o}chenig.
Section \ref{sec:ex} is devoted to examples that we formulated as avenues for further development
integrating balayage with other theoretical notions. In Section \ref{sec:pdo} we prove the frame inequalities
necessary to provide a non-uniform sampling formula for pseudo-differential operators defined
by a specific class of Kohn-Nirenberg symbols. We view this as the basis for a much broader theory.

Our last mathematical section, Section \ref{sec:covering}, is a brief recollection of Beurling's balayage
results, but formulated in terms of covering criteria and due to a
collaboration of one of the authors in 1990s with Dr. Hui-Chuan Wu. Such coverings in terms of 
polar sets of given band width are a natural vehicle for extending the theory developed herein. Finally, in the Epilogue,
we note the important related contemporary research being conducted in terms of quasicrystals, as well as other 
applications

%%%%%%%%%%%%%%%%%%%%%%%%%%%%%%%%%%%%%%%%%%%%%%%%%%%%%%%%%
%%%%%%%%%%%%%%%%%%%%%%%%%%%%%%%%%%%%%%%%%%%%%%%%%%%%%%%%%

\section{A fundamental identity of balayage}
\label{sec:balayageidentity}

By construction, and slightly paraphrased, Ingham \cite{ingh1934} proved the following result for the case $d = 1$, see \cite{beur1966}, page 115 for a modification which gives the $d>1$ case. In fact, Beurling gave a version for $d >  1$ in 1953; it is unpublished. In 1962, Kahane \cite{kaha1962} went into depth about the $d > 1$ case. 
\begin{thm}
\label{thm:balayage3}
 Let $\epsilon > 0$ and let $\Omega:[0,\infty) \rightarrow (0, \infty)$ be a continuous function, increasing to infinity. Assume the following conditions:
\begin{equation}
\int_1^\infty \Omega(r) \,\frac{dr}{r^2}<\infty ,
\end{equation}
\begin{equation}
\int exp(-\Omega(\norm{x}))\,dx <\infty ,
\end{equation}
%\begin{description}
% \item[(A)] $\int_1^\infty \Omega(r) \,\frac{dr}{r^2}<\infty$,
% \item[(B)] $\int_{\RR^d}exp(-\Omega(\norm{x}))\,dx <\infty$
%\end{description}
and $\Omega (r) > r^a$ on some interval $[r_0, \infty)$ and for some $a<1$. Then, there is $h\in L^1(\RR^d)$
for which $h(0)=1$, $\supp (\widehat{h})\subseteq \overline{B(0,\epsilon)}$, and
$$|h(x)| = \text{O}(e^{-\Omega \norm{x}}), \quad \norm{x}\rightarrow \infty .$$

\end{thm}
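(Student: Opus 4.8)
The plan is to give an explicit construction, reducing the $d$-dimensional statement to $d=1$ by tensoring at the end. First I would treat $d=1$. Fix a sequence of positive numbers $\{\delta_k\}$ with $\sum_k \delta_k \le \epsilon$ (to be pinned down below) and set
\[
h_1(x) = \prod_{k=1}^{\infty}\frac{\sin(2\pi\delta_k x)}{2\pi\delta_k x}.
\]
Each factor equals $1$ at $x=0$, so formally $h_1(0)=1$, and each factor is the inverse Fourier transform of $\frac{1}{2\delta_k}\mathbb{1}_{[-\delta_k,\delta_k]}$, so its spectrum is $[-\delta_k,\delta_k]$. Since the transform of a product is the convolution of the factor transforms and the support of a convolution lies in the (Minkowski) sum of the supports, the partial products $P_K=\prod_{k\le K}$ satisfy $\supp(\widehat{P_K})\subseteq[-\sum_{k\le K}\delta_k,\ \sum_{k\le K}\delta_k]\subseteq[-\epsilon,\epsilon]$. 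Hence the support condition $\supp(\widehat{h_1})\subseteq\overline{B(0,\epsilon)}$ will hold for the limit once convergence is established.

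The analytic heart is to choose $\{\delta_k\}$ so that condition (1) simultaneously yields the summability $\sum_k\delta_k\le\epsilon$ and the decay $|h_1(x)|=\mathrm{O}(e^{-\Omega(|x|)})$. Bounding each factor by $\min\!\big(1,\tfrac{1}{2\pi\delta_k|x|}\big)$ gives $\log|h_1(x)| \le -\sum_k \log^{+}\!\big(2\pi\delta_k|x|\big)$. Writing $\delta_k=1/t_k$ with $t_k\uparrow\infty$ and letting $n(t)=\#\{k:t_k\le t\}$, an integration by parts turns the right-hand side into $-\int^{|x|}\frac{n(t)}{t}\,dt+\mathrm{O}(1)$, while $\sum_k\delta_k=\int\frac{n(t)}{t^2}\,dt$. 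I would therefore choose $n(t)$ comparable to $t\,\Omega'(t)$ (interpreting $\Omega'$ through the Stieltjes measure $d\Omega$): then $\int^{r}\frac{n(t)}{t}\,dt\ge\Omega(r)-C$ produces the desired decay, while $\int\frac{n(t)}{t^2}\,dt\asymp\int_1^{\infty}\frac{\Omega(r)}{r^2}\,dr<\infty$ by condition (1) gives $\sum_k\delta_k<\infty$; a final rescaling $\delta_k\mapsto c\,\delta_k$, which merely dilates $\Omega$, forces $\sum_k\delta_k\le\epsilon$. This dictionary between the integrability hypothesis (1) and the product's decay rate is the classical Ingham/quasi-analyticity estimate, and is where I expect the real work to lie: making the two-sided comparison between $\Omega(r)$ and $\sum_k\log^{+}(2\pi\delta_k|x|)$ precise, including the behaviour of $\Omega(r)/r$, requires care with the measure $d\Omega$ and the monotonicity of $\Omega$. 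Hypothesis (3), $\Omega(r)>r^a$ with $a<1$, enters here to guarantee that $n(t)\to\infty$ (so the product is genuinely infinite and the decay is super-polynomial) and that the one-dimensional building block is integrable, since $\int_{\RR}e^{-\Omega(|t|)}\,dt<\infty$ when $\Omega$ dominates a positive power.

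It remains to justify convergence and collect the conclusions. Since $|\sin u/u|\le 1$, the partial products satisfy $|P_K(x)|\le|P_{K_0}(x)|$ for $K\ge K_0$, and a finite product $P_{K_0}$ of sinc factors decays like $|x|^{-K_0}$, hence lies in $L^1(\RR)$ for $K_0\ge 2$; dominated convergence then gives $P_K\to h_1$ in $L^1(\RR)$, so $\widehat{P_K}\to\widehat{h_1}$ uniformly and $\supp(\widehat{h_1})\subseteq[-\epsilon,\epsilon]$, while local uniform convergence (controlled by $\sum_k\delta_k^2<\infty$) secures $h_1(0)=1$. The value $h_1(0)=1$ and the decay estimate transfer to the limit, and $h_1\in L^1(\RR)$ follows from the decay together with the polynomial lower bound from (3).

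Finally, for $d>1$ I would set $h(x)=\prod_{j=1}^{d}h_1(x_j)$, built from the one-dimensional construction (with $\sum_k\delta_k\le\epsilon/\sqrt d$) applied to $\Omega_1(s)=\Omega(\sqrt d\,s)$, noting that (1)–(3) persist for $\Omega_1$ after substitution. Then $\widehat{h}$ is supported in the cube $[-\epsilon/\sqrt d,\epsilon/\sqrt d]^{\,d}\subseteq\overline{B(0,\epsilon)}$, and since $\max_j|x_j|\ge\norm{x}/\sqrt d$ the estimate $|h(x)|\le C\exp\!\big(-\sum_j\Omega_1(|x_j|)\big)\le C\exp\!\big(-\Omega_1(\norm{x}/\sqrt d)\big)=C\exp(-\Omega(\norm{x}))$ gives the required decay; the membership $h\in L^1(\RR^d)$ is then immediate from this decay and condition (2). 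This reproduces Beurling's $d>1$ modification of Ingham's one-dimensional theorem.
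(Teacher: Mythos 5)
First, a point of reference: the paper does not actually prove Theorem \ref{thm:balayage3} --- it is quoted from Ingham \cite{ingh1934} for $d=1$, with the $d>1$ modification attributed to Beurling \cite{beur1966} and Kahane \cite{kaha1962}. So your proposal can only be compared with the classical construction, and in outline you have reconstructed exactly that: an infinite product $\prod_k \sin(2\pi\delta_k x)/(2\pi\delta_k x)$, whose transform is an infinite convolution of box functions supported in $[-\sum_k\delta_k,\sum_k\delta_k]$; the decay read off from $\sum_k\log^{+}(2\pi\delta_k|x|)$ via the counting function $n(t)$ and integration by parts; and the tensor step for $d>1$ with $\Omega_1(s)=\Omega(\sqrt d\,s)$, the cube inside the ball, and $\max_j|x_j|\ge\|x\|/\sqrt d$. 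The convergence, support, and normalization arguments are all handled correctly.

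There is, however, one genuine gap, at the step you dismiss in a clause: ``a final rescaling $\delta_k\mapsto c\,\delta_k$, which merely dilates $\Omega$, forces $\sum_k\delta_k\le\epsilon$.'' That step fails. Rescaling replaces $h_1(x)$ by $h_1(cx)$, so the decay you obtain is $O(e^{-\Omega(c\|x\|)})$; since $\Omega$ is increasing and $c<1$, this is strictly weaker than the required $O(e^{-\Omega(\|x\|)})$ --- for $\Omega(r)=\sqrt r$ the discrepancy $e^{\Omega(r)-\Omega(cr)}=e^{(1-\sqrt c)\sqrt r}$ is unbounded. Worse, the trade-off is exactly scale-invariant: applying your dictionary to the dilated weight $\Omega(\cdot/c)$ multiplies the bandwidth $\int n(t)t^{-2}\,dt$ by $1/c$, which the subsequent rescaling undoes, so no bootstrap of this kind can reach $\sum_k\delta_k\le\epsilon$ (and a doubling assumption such as $\sup_r\Omega(r)/\Omega(cr)<\infty$, which would rescue a multiplicity-plus-rescaling variant, is not implied by the hypotheses). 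This is precisely where hypothesis (3) must enter, and its role is different from the one you assign it: $n(t)\to\infty$ and integrability already follow from $\Omega\uparrow\infty$ and condition (2). The correct mechanism is a truncation-with-headroom argument. Because $\Omega(r)>r^a$, the choice $n(t)\approx\Omega(et)$ (or your $t\,d\Omega$) yields the surplus
\[
\int_0^r \frac{n(t)}{t}\,dt \;\ge\; \int_{r}^{er}\frac{\Omega(s)}{s}\,ds+\int_{r_0}^{r}\frac{s^a}{s}\,ds
\;\ge\; \Omega(r)+\frac{r^a}{a}-O(1).
\]
Now discard the low-frequency atoms, replacing $n$ by $(n-A)^{+}$: this makes the bandwidth $\int (n(t)-A)^{+}t^{-2}\,dt$ as small as desired (dominated convergence, since $n(t)t^{-2}$ is integrable), at a cost of at most $A\log r+O_A(1)$ in the exponent, which the surplus $r^a/a$ absorbs for large $r$. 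With this replacement for your rescaling step --- and the same modification inherited by the one-dimensional blocks in your tensor construction --- the rest of your argument goes through.
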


Ingham also proved the converse, which, in fact, requries the Denjoy-Carleman theorem for quasi-analytic functions.

If balayage is possible for $(E,\Lambda)$ and $E \subseteq \RR^d$ is a closed sequence, e.g., if $E$ is separated, then Proposition \ref{prop:110} allows us to write $\widehat{\mu} = \sum_{x \in E} a_x(\mu)\widehat{\delta_x}$ on $\Lambda$, where $\sum_{x \in E}|a_x(\mu)|\leq K(E,\Lambda) \norm{\mu}_1$. In the case $\mu = \delta_y$, we write $a_x(\mu) = a_x(y)$.

We refer to the following result as \emph{A fundamental identity of balayage}.
\begin{thm}
\label{thm:balayage4}
 Let $\Omega$ satisfy the conditions of Ingham's Theorem \ref{thm:balayage3}.
Assume that $\Lambda$ is a compact S-set of strict multiplicity, that $E$ is a
separated sequence, and that balayage is possible for $(E,\Lambda)$.
Choose $\epsilon > 0$ from Beurling's Theorem \ref{theorem:balayage1} so that
$K(E,\Lambda_\epsilon) < \infty$. For this $\epsilon > 0$, take h from Ingham's
Theorem \ref{thm:balayage3}. Then, we have
\begin{equation}\label{eq:JB6}
\forall y \in \RR^d \text{ \rm{and} } \forall f \in \mathcal{C}(\Lambda), \quad f(y) = \sum_{x \in E} f(x) a_x(y) h(x-y),
\end{equation}
where
\begin{equation}\label{eq:JB7}
\sup_{y \in \RR^d} \sum_{x \in E}|a_x(y)| \leq K(E,\Lambda_\epsilon) < \infty .
\end{equation}
%\begin{description}
% \item[(C)]$\forall y \in \RR^d$ and $\forall f \in \mathcal{C}(\Lambda)$, $f(y) = \sum_{x \in E} f(x) a_x(y) h(x-y)$, where
% \item[(D)]$\sup_{y \in \RR^d} \sum_{x \in E}|a_x(y)| \leq K(E,\Lambda_\epsilon) < \infty$.
%\end{description}
In particular, we have
$$\forall y \in \RR^d \text{ \rm{and} }\forall \gamma \in \Lambda , \quad e^{2 \pi i y \cdot \gamma} = \sum_{x \in E}a_x(y)h(x-y)e^{2 \pi i x \cdot \gamma}.
$$

\begin{proof}
Since balayage is possible for $(E,\Lambda_\epsilon)$, we have that $(\delta_y)^\wedge = (\sum_{x \in E}a_x(y)\delta_x)^\wedge$ on $\Lambda_\epsilon$ and that $$\sum_{x \in E}|a_x(y)| \leq K(E, \Lambda_\epsilon)\norm{\delta_y}_1$$ for each $y\in \RR^d$. Thus, (\ref{eq:JB7}) is obtained. Next, for each fixed $y\in \RR^d$, define the measure, $$\eta_y (w) = h_y(w)\left(\delta_y - \sum_{x \in E}a_x(y)\delta_x\right)(w) \in M_b(\RR^d),$$ where $h_y (w) = h(w-y)$. Then, we have
\begin{align*}
(\eta_y)^\wedge(\gamma) &= \left[ (h_y)^\wedge \ast \left(\delta_y - \sum_{x\in E}a_x(y)\delta_x \right)^\wedge \right](\gamma)\\
&= \int \widehat{h}(\gamma - \lambda) e^{-2 \pi i y \cdot (\gamma - \lambda)}
\left(\delta_y - \sum_{x\in E}a_x(y)\delta_x \right)^\wedge (\lambda) \, d \lambda \\
&= \int_{(\Lambda_\epsilon)^c} \widehat{h}(\gamma - \lambda) e^{-2 \pi i y \cdot (\gamma - \lambda)}\left(\delta_y - \sum_{x\in E}a_x(y)\delta_x \right)^\wedge (\lambda) \, d \lambda \\
\end{align*}
on $\Rhat^d$. If $\gamma \in \Lambda$ and $\lambda \in (\Lambda_\epsilon)^c$, then $\widehat{h} (\gamma - \lambda) = 0$. Consequently, we obtain $$\forall y \in \RR^d\text{ and }\forall \gamma \in \Lambda, \quad (\eta_y)^\wedge (\gamma) = 0.$$ Thus, since $\Lambda$ is an S-set and $h(0) = 1$, we obtain (\ref{eq:JB6}) from the definition of $\eta_y$.
\end{proof}

\end{thm}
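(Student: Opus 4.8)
The plan is to reduce the entire statement to a single application of the spectral synthesis hypothesis: for each fixed $y$ I would manufacture one bounded measure $\eta_y$ whose Fourier transform vanishes on $\Lambda$ and whose pairing with an arbitrary $f \in \mathcal{C}(\Lambda)$ equals exactly the difference of the two sides of (\ref{eq:JB6}). The S-set property then kills that difference. The bound (\ref{eq:JB7}) comes for free: by the choice of $\epsilon$ from Beurling's Theorem \ref{theorem:balayage1} we have $K(E,\Lambda_\epsilon)<\infty$, so balayage holds for $(E,\Lambda_\epsilon)$, and applying it to $\mu=\delta_y$ yields coefficients $a_x(y)$ with $\widehat{\delta_y}=\sum_{x\in E}a_x(y)\widehat{\delta_x}$ on $\Lambda_\epsilon$ and $\sum_{x\in E}|a_x(y)|\le K(E,\Lambda_\epsilon)\norm{\delta_y}_1=K(E,\Lambda_\epsilon)$, the last equality using $\norm{\delta_y}_1=1$, which makes the bound uniform in $y$.

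Next I would set $\sigma_y=\delta_y-\sum_{x\in E}a_x(y)\delta_x$ and $h_y(w)=h(w-y)$, and define the measure $\eta_y=h_y\,\sigma_y$. Since Ingham's $h$ agrees (off a null set) with a bounded continuous function and $\sum_{x\in E}|a_x(y)|<\infty$, the product $\eta_y$ is a bounded Radon measure. The technical heart is to compute $\widehat{\eta_y}$ and to legitimize writing the transform of this function-times-measure as a convolution $\widehat{\eta_y}=\widehat{h_y}*\widehat{\sigma_y}$. This is sound precisely because $\supp(\widehat h)\subseteq\overline{B(0,\epsilon)}$ is compact, so $\widehat{h_y}$ is compactly supported; the convolution is then a genuine, absolutely convergent integral of $\widehat{h_y}(\gamma-\lambda)=e^{-2\pi i y\cdot(\gamma-\lambda)}\widehat{h}(\gamma-\lambda)$ against the bounded continuous function $\widehat{\sigma_y}$, and Fubini justifies the interchange.

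The decisive step, and the one I expect to be the main obstacle, is showing $\widehat{\eta_y}=0$ on $\Lambda$, because it is here that the two ingredients must interlock. Balayage on $\Lambda_\epsilon$ gives $\widehat{\sigma_y}=0$ on $\Lambda_\epsilon$, so the convolution integral is carried by $(\Lambda_\epsilon)^c$; but for $\gamma\in\Lambda$ and $\lambda\in(\Lambda_\epsilon)^c$ one has $\norm{\gamma-\lambda}\ge \mathrm{dist}(\lambda,\Lambda)>\epsilon$, whence $\widehat{h}(\gamma-\lambda)=0$. Consequently $\widehat{\eta_y}(\gamma)=0$ for every $\gamma\in\Lambda$. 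The conceptual point buried in this bookkeeping is exactly why Beurling's Theorem \ref{theorem:balayage1} is needed: one must first thicken $\Lambda$ to $\Lambda_\epsilon$ so that the $\epsilon$-spreading induced by convolving with $\widehat h$ is absorbed, and verifying that the geometry of $\supp(\widehat h)$ matches the width of the gap between $\Lambda$ and $(\Lambda_\epsilon)^c$ is the delicate matching to get right.

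Finally, since $\Lambda$ is an S-set and $f\in\mathcal{C}(\Lambda)$, the defining implication (\ref{eq:sset}) applied to $\eta_y$ forces $\int f\,d\eta_y=0$. Expanding $\int f\,d\eta_y=\int f(w)h(w-y)\,d\sigma_y(w)=f(y)h(0)-\sum_{x\in E}a_x(y)f(x)h(x-y)$ and using $h(0)=1$ gives $f(y)=\sum_{x\in E}f(x)a_x(y)h(x-y)$, which is (\ref{eq:JB6}); the absolute convergence of the sum follows from $\sum_{x\in E}|a_x(y)|\le K(E,\Lambda_\epsilon)$ together with the boundedness of $f\in C_b(\RR^d)$ and of $h$. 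The ``in particular'' assertion is then the specialization of (\ref{eq:JB6}) to the function $x\mapsto e^{2\pi i x\cdot\gamma}$ for a fixed $\gamma\in\Lambda$, which lies in $\mathcal{C}(\Lambda)$ because its Fourier transform is $\delta_\gamma$ with $\supp(\delta_\gamma)=\{\gamma\}\subseteq\Lambda$.
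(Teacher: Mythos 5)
Your proposal is correct and is essentially identical to the paper's own proof: the same construction of $\eta_y = h_y\left(\delta_y - \sum_{x\in E}a_x(y)\delta_x\right)$, the same convolution computation showing $\widehat{\eta_y}=0$ on $\Lambda$ via the support gap between $\Lambda$ and $(\Lambda_\epsilon)^c$, and the same appeal to the S-set property with $h(0)=1$. The extra details you supply (Fubini justification, absolute convergence, and checking $e^{2\pi i x\cdot\gamma}\in\mathcal{C}(\Lambda)$) are fine and only make explicit what the paper leaves implicit.
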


\begin{example}\label{ex:fouierframebalayage}
 Theorem \ref{thm:balayage4} can be used to prove Beurling's sufficient condition for a Fourier frame in terms of balayage (Theorem \ref{theorem:balayage2}), see part b. For convenience, let $\Lambda$ be symmetric about $0 \in \Rhat^d$, i.e., $-\Lambda = \Lambda$.
%\begin{enumerate}[(a)]

 {\it a.} Using the notation of Theorem \ref{thm:balayage4}, we have the following estimate.
\begin{align*}
 \sum_{x \in E} |\int a_x(y)h(x-y)f(y)\,dy|^2 &\leq \sum_{x\in E} \int |a_x(y)||h(x-y)|^2 \,dy \int |a_x(y)||f(y)|^2 \,dy \\
&\leq C \norm{h}_2^2 \int \left(\sum_{x \in E}|a_x(y)|\right)|f(y)|^2\,dy \\
&\leq C \norm{h}_2^2 K(E, \Lambda_\epsilon) \norm{f}_2^2,
\end{align*}
where $C$ is a uniform bound of $\{|a_x(y)|: x \in E, y \in \RR^d \}$.

  {\it b.} It is sufficient to prove the lower frame bound. Let $F \in L^2(\Lambda)$ be considered as an element of $(PW_\Lambda)^\wedge$, i.e., $\widehat{f} = F$ vanishes off of $\Lambda$ and $f \in L^2(\RR^d)$. We shall show that
\begin{equation}\label{eq:JB8}
A \norm{F}_{L^2(\Lambda)} \leq \left(\sum_{x \in E}|f(x)|^2 \right)^{1/2},
\end{equation} where $A$ is independent of $F\in L^2(\Lambda)$.
\begin{align*}
 \norm{F}_{L^2(\Lambda)}^2 &= \int_{\Lambda}\overline{F(\lambda)}\left(\int f(y) e^{-2 \pi i y \cdot \lambda}\,dy\right)\,d\lambda \\
& = \int_{\Lambda}\overline{F(\lambda)}\left(\int f(y) \left(\sum_{x \in E}a_x(y) h(x-y)e^{-2 \pi i x \cdot \lambda}\right)\,dy \right)\,d\lambda \\
&= \sum_{x \in E}\overline{f(x)}\left(\int a_x(y)h(x-y)f(y)\,dy\right) \\
&\leq \left(\sum_{x \in E} |f(x)|^2\right)^{1/2}\left(\sum_{x\in E} \left|\int a_x(y)h(x-y)f(y)\, dy \right|^2\right)^{1/2} \\
&\leq \left[ C \norm{h}_2^2 K(E,\Lambda_\epsilon)\right]^{1/2}\left(\sum_{x\in E} |f(x)|^2 \right)^{1/2}\norm{f}_2,
\end{align*}
and so we set $A = 1 / [C\norm{h}_2^2 K(E, \Lambda_\epsilon)]^{1/2}$ to obtain (\ref{eq:JB8}).

\end{example}

%%%%%%%%%%%%%%%%%%%%%%%%%%%%%%%%%%%%%%%%%%%%%%%%
%%%%%%%%%%%%%%%%%%%%%%%%%%%%%%%%%%%%%%%%%%%%%%%
%%%%%%%%%%%%%%%%%%%%%%%%%%%%%%%%%%%%%%%%%%%%%%%%

\section{Short time Fourier transform (STFT) frame inequalities}
\label{sec:stft}

\begin{defn}
\emph{a.}  Let $f, g \in L^2(\mathbb{R}^{d})$.
 The {\it short-time Fourier transform} (STFT) of $f$ with respect to $g$ is the function $V_{g}f$ on $\mathbb{R}^{2d}$ defined as
\[ \quad V_{g}f(x, \omega) = \int f(t) \overline{g(t-x)} \ e^{- 2 \pi i t \cdot \omega } \ dt,\]
\end{defn}
\noindent see \cite{groc2001}, \cite{groc2006}.\\

\emph{b.}  The STFT is uniformly continuous on $\mathbb{R}^{2d}$.  Further, for a fixed
``window'' $g \in L^{2}(\mathbb{R}^{d})$ with $\|g\|_{2} = 1$,
we can recover the original function $f \in L^{2}(\mathbb{R}^{d})$ from its STFT $V_{g}f$
by means of the vector-valued integral inversion formula,
\begin{equation}
\label{eq:InversionSTFT}
f = \int \int  V_{g}f(x, \omega) \ e_{\omega} \tau_{x} g \ d\omega \ dx,
\end{equation}
where modulation $e_{\omega}$ was defined earlier and translation $\tau_x$ is
defined as $\tau_{x}g(t) = g(t-x)$.  Explicitly, Equation (\ref{eq:InversionSTFT}) signifies that we have the vector-valued mapping, $(x,\omega) \mapsto e_{\omega} \tau_{x} g \in L^{2}(\mathbb{R}^{d})$, and
\[\forall \ h \in L^{2}({\mathbb R}^d), \ \langle f, h \rangle = \int \int \left[ \int V_{g}f(x, \omega) (e_{\omega} \tau_{x} g(t) ) \overline{h(t)} \ dt \right] d\omega dx.\]
Also, if $\widehat{f} = F$ and $\widehat{g} = G$, where $f, g \in L^{2}(\mathbb{R}^d)$, then one obtains the {\it fundamental identity of time frequency analysis,}
\begin{equation}
\label{eq:time-frequency}
V_{g}f(x,\omega) = e^{-2 \pi i x \cdot \omega } V_{G}F(\omega,-x).
\end{equation}

\emph{c.}  Let $g_0(x) = 2^{d/4} e^{- \pi \| x \|^2 }.$  Then $G_0(\gamma) = \widehat{g}_0(\gamma) = 2^{d/4} e^{- \pi \| \gamma \|^2 }$ and $\| g_0 \|_2 = 1$, see \cite{BenCza2009} for properties of $g_0.$  The {\it Feichtinger algebra}, ${\mathcal S}_0({\mathbb R}^d),$ is
 \[ \mathcal{S}_0(\mathbb{R}^d) = \{ f \in L^2(\mathbb{R}^d) \colon \| f \|_{\mathcal{S}_0} = \| V_{g_0}f \|_1 < \infty \}. \]
For now it is useful to note that the Fourier transform of $\mathcal{S}_0(\mathbb{R}^d)$ is an isometric isomorphism onto itself, and, in particular, $f \in \mathcal{S}_0(\mathbb{R}^d)$ if and only if $F \in \mathcal{S}_0(\widehat{\mathbb{R}}^d)$.

%%%%%%%%%%%%%%%%%%%%%%%   THEOREM starts here
\begin{thm}
\label{thm:1NonUniform}
Let $E = \{x_n\}\subseteq \RR^d$ be a
separated sequence, that is symmetric about $0 \in \mathbb{R}^{d}$; and let $\Lambda \subseteq \widehat{\mathbb{R}}^{d}$ be an S-set of strict multiplicity, that is compact, convex, and symmetric about $0 \in \Rhat^d$.  Assume balayage is possible for $(E, \Lambda)$.  Further, let $g \in L^2(\mathbb{R}^{d}), \,\widehat{g} = G,$ have the property that  $\norm{g}_2 = 1$.

{\it a.}  We have that
$$
\exists \ A > 0, \quad \text{such that } \quad \forall f \in PW_\Lambda \backslash \{0\}, \quad \widehat{f} = F,
$$
\begin{equation}
\label{eq:z}
A \| f \|_{2}^{2} = A \|F \|_{2}^{2}  \leq   \sum_{x \in E} \int | V_{G} F(\omega, x)|^{2} \ d\omega =  \sum_{x \in E} \int | V_{g}f(x, \omega)|^2 \ d\omega.
\end{equation}

{\it b.}  Let $g \in \mathcal{S}_0(\mathbb{R}^d)$.  We have that
$$ 
     \exists \ B > 0, \quad \text{such that } \quad \forall f \in PW_\Lambda \backslash \{0\}, \quad \widehat{f} = F,
$$
\begin{equation}
\label{eq:zz}
  \sum_{x \in E} \int | V_{g} f(x, \omega)|^{2} \ d\omega =  \sum_{x \in E} \int | V_{G}F(\omega, -x)|^2 \ d\omega \leq B \| F \|_2^2 = B \| f \|_2^2,
\end{equation}
where $B$ can be taken as $2^{d/2}\ C \| V_{g_0}g \|_{1}^2$ and where
$$
      C = {\rm sup}_{u \in {\mathbb R}^d} \sum_{x \in E} e^{-\|x-u\|^2}.
$$
see the technique in \cite{FeiZimm1998}, Lemma 3.2.15, cf. \cite{FeiSun2007}, Lemma 3.2.

\begin{proof}
{\it a.i.} We first combine the $STFT$ and balayage to compute
\begin{eqnarray}
\label{eqn:eqn4balayage}
   & & \| f \|_{2}^{2} = \int_{\Lambda} F(\gamma) \ \overline{F(\gamma) } \ d \gamma \\
   & = & \int_{\Lambda} F(\gamma) \ \left( \int \int \overline{ V_{G}F(y, \omega)} \ \overline{\ e_{\omega}(\gamma)} \ \overline{ G(\gamma - y)} \ d\omega \ dy \right) \ d \gamma  \nonumber \\
   & = & \int_{\Lambda} F(\gamma) \ \left( \int \int \overline{ V_{G}F(y, \omega)}  \ \overline{ G(\gamma - y)}  \left( \ \sum_{x \in E} \overline{a_{x}(\omega)}  \ \overline{h(x - \omega)} \ e^{ -2 \pi i x \cdot \gamma} \right) \ d\omega \ dy \right) \ d \gamma \nonumber  \\
   & = & \int \int \overline{ V_{G}F(y, \omega)} \ \left( \ \sum_{x \in E} \overline{a_{x}(\omega)}  \ \overline{h(x
 - \omega)}  \  \int F(\gamma) \  \overline{G(\gamma - y)} \ e^{- 2 \pi i x \cdot \gamma}  \ d \gamma \right) \ d\omega \ dy \nonumber  \\
   & = & \int \int \overline{ V_{G}F(y, \omega)} \ \left( \ \sum_{x \in E} \overline{a_{x}(\omega)}  \ \overline{h(x - \omega)} \ V_{G}F(y, x
) \right) d \omega \ dy \nonumber \\
& = & \int \left[ \ \sum_{x \in E} \left( \int \overline{ V_{G}F(y, \omega)} \ \overline{a_{x}(\omega)} \ \overline{ h(x - \omega)}   \ d \omega \right) \ V_{G}F(y, x)  \right] \ dy \nonumber \\
& \leq & \int \left( \sum_{x \in E} \left| \int a_{x}(\omega) \ h(x - \omega) \  V_{G}F(y, \omega) \ d \omega \right|^{2} \right)^{1/2} \left(\sum_{x \in E} \left| V_{G}F(y, x) \right|^{2} \right)^{1/2} \ dy . \nonumber
\end{eqnarray}

{\it a.ii.} We shall show that there is a constant $C > 0,$ independent of $f \in PW_{\Lambda}$,
such that
\begin{equation}
\label{eq:eqn5cauchy}
\forall \ y \in \mathbb{R}^{d}, \  \sum_{x \in E} \left| \int a_{x}(\omega) \ h(x - \omega) V_{G}F(y,
\omega) \ d \omega \right|^{2}
\ \leq  C^2 \int \left| V_{G}F(y, \omega) \right|^{2} \ d\omega.
\end{equation}
The left side of (\ref{eq:eqn5cauchy}) is bounded above by
\begin{eqnarray*}
& & \sum_{x \in E} \left( \int | a_{x}(\omega)| \ |h(x - \omega)|^{2} \ d\omega \right) \left(\int | a_{x}(\omega)| \ |V_{G}F(y, \omega)|^{2} \ d\omega  \right) \\
& \leq & \sum_{x \in E} \left(K_{1} \ \int |h(x - \omega)|^{2} \ d\omega \right) \left(\int | a_{x}(\omega)| \ |V_{G}F(y, \omega)|^{2} \ d\omega  \right)  \\
& = & K_{1} \  \| h \|_2^2 \ \sum_{x \in E} \int | a_{x}(\omega)|  \  |V_{G}F(y, \omega)|^{2} \ d\omega     \\
& = & K_{1} \ \| h \|_2^2  \ \int \left(\sum_{x \in E} | a_{x}(\omega)| \right)  |V_{G}F(y, \omega)|^{2} \ d\omega \\
& \leq & K_1 \ K_2 \  \| h \|_2^2 \ \int |V_{G}F(y, \omega)|^{2} \ d\omega,
\end{eqnarray*}
where we began by using H\"{o}lder's inequality and where $K_1$ and $K_2$ exist
because of (\ref{eq:JB7}) in Theorem \ref{thm:balayage4}.  Let $C^2 = K_1 K_2 \ \| h \|_2^2$.

{\it a.iii.} Combining parts \emph{a.i} and \emph{a.ii}, we have from
(\ref{eqn:eqn4balayage}) and (\ref{eq:eqn5cauchy}) that
\begin{eqnarray*}
& & \| f \|_2^{2} = \int_{\Lambda} F(\gamma) \ \overline{F(\gamma) } \ d \gamma \\
& \leq & \int \ C \ \left(\int |V_{G}F(y, \omega) |^{2} \ d\omega  \right)^{1/2} \ \left(\sum_{x \in E} |V_{G}F(y, x) |^{2} \right)^{1/2} \ dy \\
& \leq &  C \ \left(\int \int |V_{G}F(y, \omega) |^{2} \ d\omega \ dy \right)^{1/2} \ \left(\int \sum_{x \in E} |V_{G}F(y, x)|^{2} \ dy \right)^{1/2} \\
& = & C \ \left( \int_{\Lambda} |F(\gamma)|^2 \ d\gamma \right)^{1/2} \ \left(\int \sum_{x \in E} |V_{G}F(y, x)|^{2} \ dy \right)^{1/2},
\end{eqnarray*}
where we have used H\"{o}lder's inequality and the fact that the STFT is an isometry from $L^2(\mathbb{R}^d)$ into $L^2(\mathbb{R}^{2d})$.
Consequently, by the symmetry of $E$, we have
\begin{eqnarray*}
\frac{1}{C^2} \| f \|_2^{2} & = & \frac{1}{C^2} \ \int_{\Lambda} |F(\gamma)|^2 \ d\gamma  \\
& \leq & \int  \sum_{x \in E} | V_{G} F(\omega, -x)|^{2} \ d\omega = \int \sum_{x \in E} | V_{g}f(x, \omega)|^2 d \omega,
\end{eqnarray*}
where we have used ($\ref{eq:time-frequency}$).  Part \emph{a} is completed by setting $A = 1/C^2.$

{\it b.i.}  The proof of (\ref{eq:zz}) will require the reproducing formula \cite{FeiSun2007}, page 412:
\begin{equation}
\label{eq:FeiSun}
V_{g}f(y, \gamma) = \langle V_{g_0}f, V_{g_0}( e_{\gamma}\tau_{y}g) \rangle,
\end{equation}
where $\widehat{g}_0 = G_0.$ Equation (\ref{eq:FeiSun}) is a consequence of the inversion formula,
$$
f = \int \int V_{g_0}f(x, \omega) e_{\omega} \tau_x g_0 \ d\omega \ dx,
$$
and substituting the right side into the definition $\langle f, e_{\gamma} \tau_y g \rangle$ of $V_gf(y, \gamma).$  Equation (\ref{eq:FeiSun}) is valid
for all $f, g \in L^2(\mathbb{R}^d).$

{\it b.ii.} Using Equation (\ref{eq:FeiSun}) from part {\it b.i} we compute
$$
     \sum_{x \in E} \int | V_{g} f(x, \omega)|^{2} d\omega
$$
$$
       = \int \sum_{x \in E} |\langle V_{g_0}f, V_{g_0}( e_{\omega}\tau_{x}g) \rangle|^2 d\omega
$$
$$
     =    \int \sum_{x \in E} |\int \int \overline{V_{g_0}f(y,\gamma)}\ V_{g_0}( e_{\omega}\tau_{x}g)(y,\gamma)
     \ dy\ d\gamma|^2 d\omega
$$
$$
      \leq \int \sum_{x \in E}\left(\left(\int \int|V_{g_0}f(y,\gamma)|^2|V_{g_0}(e_{\omega}\tau_{x}g)(y,\gamma)|
      \ dy\ d\gamma\right)\left(\int \int |V_{g_0}(e_{\omega}\tau_{x}g)(y,\gamma)|\ dy\ d\gamma\right)\right) d\omega.
$$
{\it b.iii.} Since
$$
     V_{g_0}(e_{\omega}\tau_{x})g)(y,\gamma) = \int g(t-x)\ \overline{g_{0}(t-y)}\ e^{-2\pi i t\cdot (\gamma - \omega)}dt
$$
$$
     = e^{-2\pi i x\cdot (\gamma - \omega)}\ \int g(u)\ \overline{g_{0}(u+(x-y))}\ e^{-2\pi i u\cdot (\gamma - \omega)}du,
$$
we have
$$
    |V_{g_0}(e_{\omega}\tau_{x})g)(y,\gamma)| \leq |V_{g_0}g(y-x, \gamma - \omega).
$$
Inserting this inequality into the last term of part {\it b.ii}, the inequality of part {\it b.ii} becomes
$$
      \sum_{x \in E} \int | V_{g} f(x, \omega)|^{2} \ d\omega
$$
$$
      \leq  \int \sum_{x \in E}\left(\left(\int \int|V_{g_0}f(y,\gamma)|^2|V_{g_0}g(y-x,\gamma -\omega)|
      \ dy\ d\gamma \right) 
    \left(\int \int |V_{g_0}g)(y-x,\gamma - \omega)|\ dy\ d\gamma \right)\right) d\omega
$$
$$
      = \|V_{g_0}g\|_1\ \int \sum_{x \in E}\left(\int \int|V_{g_0}f(y,\gamma)|^2|V_{g_0}g(y-x,\gamma -\omega)|
      \ dy\ d\gamma\right)\ d\omega
$$
$$
      \leq \|V_{g_0}g\|_1\ \int \int|V_{g_0}f(y,\gamma)|^2\ \left(\int \sum_{x \in E}
     |V_{g_0}g(y-x,\gamma -\omega)|\ d\omega\right)\ dy\ d\gamma.
$$
{\it b.iv.} By the reproducing formula, Equation (\ref{eq:FeiSun}), the integral-sum
factor in the last term of part {\it b.iii} is
$$
     \int \sum_{x \in E}
     |V_{g_0}g(y-x,\gamma -\omega)|\ d\omega
$$
$$
   = \int \sum_{x \in E} |\int\int\ V_{g_0}g(z,\zeta)\ \overline{V_{g_0}(e_{\gamma - \omega}\tau_{y - x}g_0)(z,\zeta)}\
    dz\ d\zeta|d\omega
$$
$$
     = \int \sum_{x \in E} |\int\int\ V_{g_0}g(z,\zeta)\ \left(\overline{\int g_{0}(u)\overline{g_{0}(u-(z+x-y))}
     \ e^{-2\pi i u\cdot (\zeta - \gamma + \omega)}\ du}\right) dz\ d\zeta|\ d\omega
$$
$$
     = \int \sum_{x \in E} |\int\int\ V_{g_0}g(z,\zeta)\ \overline{V_{g_0}g_{0}(z+(x-y), \zeta + (\omega - \gamma))}\ dz\ d\zeta|d\omega
$$
$$
    \leq \int \int |V_{g_0}g(z,\zeta)|\ \left(\int \sum_{x \in E} |V_{g_0}g_{0}(z+(x-y),\zeta +(\omega - \gamma))|
    d\omega\right)\ dz\ d\zeta.
$$
{\it b.v.} Substituting the last term of part {\it b.iv} in the last term of part {\it b.iii}, the inequality
of part {\it b.ii} becomes
$$
       \sum_{x \in E} \int | V_{g} f(x, \omega)|^{2} \ d\omega
$$
$$
   \leq  \|V_{g_0}g\|_1 \int\int|V_{g_0}f(y,\gamma)|^2 \times
$$
$$
   \left(\int\int|V_{g_0}g(z,\zeta)|
   \left(\sum_{x \in E}\left(\int|V_{g_0}g_0(z+(x-y)),\zeta + (\omega - \gamma))| d\omega
   \right)\right) dz\ d\zeta\right)dy\ d\gamma
$$
$$
   = \|V_{g_0}g\|_1 \int\int|V_{g_0}f(y,\gamma)|^2\left(\int\int |V_{g_0}g(z,\zeta)|\left(\sum_{x \in E} K(x,y,z,\gamma,\zeta)\right)dz\ 
   d\zeta\right)dy\ d\gamma,
$$
where
$$
     K(x,y,z,\gamma,\zeta) = e^{-\frac{\pi}{2}\|z+(x-y)\|^2}\ \int e^{-\frac{\pi}{2}\|\zeta + (\omega - \gamma\|^2}d\omega.
$$
Hence,
$$
        \sum_{x \in E} \int | V_{g} f(x, \omega)|^{2} \ d\omega
     \leq 2^{\frac{d}{2}}\ C\ \|V_{g_0}g\|_{1}^2\ \|V_{g_0}f\|^2,
$$
where
$$
           C = {\rm sup}_{u \in {\mathbb R}^d} \sum_{x \in E} e^{-\|x-u\|^2}.
$$
The fact, $C < \infty$, is straightforward to verify, but see \cite{NarWar1991} and \cite{NarSivWar1994},
Lemma 2.1,
for an insightful, refined estimate of $C.$ The proof of part {\it b} is completed by a simple application of
Equation (\ref{eq:STFT_orthogonality}).
\end{proof}
\end{thm}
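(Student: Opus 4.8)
The plan is to prove the two frame inequalities separately, in each case reducing the STFT expression to a quantity controlled by the fundamental identity of balayage (Theorem \ref{thm:balayage4}) or by the reproducing structure of the STFT.

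For the lower bound (part a), I would begin with $\norm{f}_2^2 = \norm{F}_2^2 = \int_\Lambda F(\gamma)\overline{F(\gamma)}\,d\gamma$ and expand the conjugate factor by the STFT inversion formula (\ref{eq:InversionSTFT}) applied to $F$ with window $G$, giving $\overline{F(\gamma)} = \int\int \overline{V_GF(y,\omega)}\,\overline{e_\omega(\gamma)}\,\overline{G(\gamma-y)}\,dy\,d\omega$. The decisive move is to replace the character $\overline{e_\omega(\gamma)}=e^{-2\pi i\omega\cdot\gamma}$, valid for $\gamma\in\Lambda$, by the conjugate of the balayage identity in Theorem \ref{thm:balayage4}, namely $e^{-2\pi i\omega\cdot\gamma}=\sum_{x\in E}\overline{a_x(\omega)}\,\overline{h(x-\omega)}\,e^{-2\pi i x\cdot\gamma}$. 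After an application of Fubini, the inner integral $\int F(\gamma)\overline{G(\gamma-y)}e^{-2\pi i x\cdot\gamma}\,d\gamma$ collapses to the sample $V_GF(y,x)$, leaving $\norm{f}_2^2$ expressed as $\int\big[\sum_{x\in E}\big(\int \overline{V_GF(y,\omega)}\,\overline{a_x(\omega)}\,\overline{h(x-\omega)}\,d\omega\big)V_GF(y,x)\big]\,dy$.

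I would then apply Cauchy--Schwarz in the sum over $x$ and establish the uniform estimate $\sum_{x\in E}\big|\int a_x(\omega)h(x-\omega)V_GF(y,\omega)\,d\omega\big|^2 \le C^2\int|V_GF(y,\omega)|^2\,d\omega$ exactly along the lines of Example \ref{ex:fouierframebalayage}: split each term by H\"older, bound one copy of $|a_x(\omega)|$ by its uniform bound to produce $\norm{h}_2^2$, and absorb the remaining $\sum_{x\in E}|a_x(\omega)|$ using (\ref{eq:JB7}). A final Cauchy--Schwarz in $y$, together with the orthogonality (Moyal) relation $\int\int|V_GF|^2=\norm{F}_2^2\norm{G}_2^2$ and $\norm{G}_2=1$, yields $\norm{f}_2^2\le C\,\norm{F}_2\,\big(\int\sum_{x\in E}|V_GF(y,x)|^2\,dy\big)^{1/2}$, so that $A=1/C^2$ works; the two displayed sums in (\ref{eq:z}) are then identified via the time-frequency identity (\ref{eq:time-frequency}) and the symmetry of $E$.

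For the upper bound (part b), balayage plays no role; the engine is instead the reproducing formula (\ref{eq:FeiSun}), $V_gf(y,\gamma)=\inner{V_{g_0}f}{V_{g_0}(e_\gamma\tau_yg)}$. Writing each sample $V_gf(x,\omega)$ this way, expanding the inner product as a double integral, and applying Cauchy--Schwarz with weight $|V_{g_0}(e_\omega\tau_xg)|$, I would invoke the covariance bound $|V_{g_0}(e_\omega\tau_xg)(y,\gamma)|\le|V_{g_0}g(y-x,\gamma-\omega)|$ to obtain a convolution-type majorant, with the unweighted factor integrating to $\norm{V_{g_0}g}_1$ by translation invariance. After reordering the integrations, everything reduces to bounding $\int\sum_{x\in E}|V_{g_0}g(y-x,\gamma-\omega)|\,d\omega$ uniformly; here I would apply the reproducing formula a second time to $V_{g_0}g$, reducing the kernel to the explicit Gaussian cross-ambiguity function $V_{g_0}g_0$. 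The $\omega$-integral of the Gaussian produces $2^{d/2}$, and the sum of translated Gaussians is dominated by $C=\sup_u\sum_{x\in E}e^{-\norm{x-u}^2}$, finite precisely because $E$ is separated. Collecting constants gives $B=2^{d/2}C\norm{V_{g_0}g}_1^2$, with $\norm{V_{g_0}f}_2^2=\norm{f}_2^2$ since $\norm{g_0}_2=1$, and (\ref{eq:time-frequency}) with symmetry of $E$ matches the two STFT forms.

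The main obstacle is the lower bound: the genuinely nonroutine idea is to insert the balayage identity for the character $e_\omega$ inside the STFT inversion, thereby converting the continuous frequency integral into a sum over the sampling set $E$ while keeping the discrepancy controlled by $h$; once that substitution is in place, the uniform estimate and the Moyal relation are comparatively mechanical. In part b the only delicate point is the finiteness of $C$, which rests on separation of $E$, together with the bookkeeping of the iterated reproducing formula.
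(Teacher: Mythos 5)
Your proposal is correct and follows essentially the same route as the paper's own proof: in part \emph{a} you insert the balayage identity of Theorem \ref{thm:balayage4} into the STFT inversion formula, collapse the inner integral to $V_GF(y,x)$, and run the same two Cauchy--Schwarz/H\"older estimates with $A = 1/C^2$; in part \emph{b} you use the reproducing formula (\ref{eq:FeiSun}) twice, the covariance bound, and the Gaussian kernel estimate to arrive at the same constant $B = 2^{d/2}C\norm{V_{g_0}g}_1^2$. No gaps to report.
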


%%%%%%%%%%%%%%%%%%%%%%%%%%%       Theorem of Grochenig

We now recall a special case of a fundamental theorem of Gr{\"o}chenig for non-uniform Gabor frames, see \cite{groc1991}, Theorem S, and \cite{groc2001}, Theorem 13.1.1, cf. \cite{FeiGro1988} and \cite{FeiGro1989} for a precursor of this result,
presented in an almost perfectly disguised way for the senior author to understand.
The general case of Gr{\"o}chenig's theorem is true for the class of modulation spaces,
$M_{v}^{1}({\mathbb R}^d),$ where the Feichtinger algebra, ${\mathcal S}_{0}({\mathbb R}^d),$
is the case that the weight $v$ is identically $1$ on ${\mathbb R}^d.$ The author's proof at all levels of
generalization involves a significant analysis of convolution operators on the
Heisenberg group. See \cite{groc2001} for
an authoritative exposition of modulation spaces as well as their history.

%%%%  Gr{\"o}chenig \cite{groc1991}, \cite{groc2001} (Chapter 13):

\begin{thm}
\label{thm:groch}
Given any $g \in  \mathcal{S}_0(\mathbb{R}^d)$. There is $r = r(g) > 0$ such that if
$E = \{(s_n, \sigma_n)\} \subseteq {\mathbb R}^d \times {\widehat{\mathbb R}}^d$ is a separated sequence with the property that
$$
   \bigcup_{n=1}^{\infty} \overline{B((s_n,{\sigma}_n),r(g))} ={\mathbb R}^d \times {\widehat{\mathbb R}}^d,
$$
then the frame operator, $S = S_{g,E},$ defined by
$$
   S_{g,E}\,f = {\sum}_{n=1}^{\infty}\langle f, {\tau}_{s_n}e_{\sigma_n}g\rangle\, {\tau}_{s_n}e_{\sigma_n}g,
$$
is invertible on $\mathcal{S}_0(\mathbb{R}^d)$.

Moreover, every $f \in  \mathcal{S}_0(\mathbb{R}^d)$ has a non-uniform Gabor expansion,
$$
f = {\sum}_{n=1}^{\infty} \langle f, \tau_{s_n} e_{\sigma_n} g \rangle S_{g,E}^{-1}(\tau_{s_n} e_{\sigma_n}g),
$$
where the series converges unconditionally in $ \mathcal{S}_0(\mathbb{R}^d)$.

($E$ depends on $g.$)

\end{thm}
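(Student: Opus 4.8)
The plan is to establish that the system $\{\tau_{s_n} e_{\sigma_n} g\}$ is a frame for $L^2(\RR^d)$, and more strongly a Banach frame for $\mathcal{S}_0(\RR^d)$, by a perturbation argument in which the discrete frame operator $S_{g,E}$ is compared with the continuous resolution of the identity (\ref{eq:InversionSTFT}); invertibility of $S_{g,E}$ then follows, and the Gabor expansion is read off from $f = S_{g,E}S_{g,E}^{-1}f$. Normalize $\norm{g}_2 = 1$, write $z = (x,\omega)$, $z_n = (s_n,\sigma_n)$, and let $\pi(z) = e_\omega \tau_x$ denote the time-frequency shift, so that $V_g f(z) = \inner{f}{\pi(z)g}$ and (\ref{eq:InversionSTFT}) reads $f = \int V_g f(z)\,\pi(z)g\,dz$. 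The range $\Hcal = V_g(L^2(\RR^d)) \subseteq L^2(\RR^{2d})$ is a reproducing kernel Hilbert space whose kernel $R(z,w) = \inner{\pi(w)g}{\pi(z)g}$ satisfies $|R(z,w)| = |V_g g(z-w)|$, exactly as in the computation behind (\ref{eq:FeiSun}).

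First I would record the two ingredients supplied by the hypothesis $g \in \mathcal{S}_0(\RR^d)$. The upper frame bound follows precisely as in Theorem \ref{thm:1NonUniform}\,\emph{b}, from separatedness of $E$ together with a summability constant of the type $C = \sup_u \sum_n e^{-\norm{z_n - u}^2}$. The second, and crucial, ingredient is the regularity that $V_g g$ lies in the Wiener amalgam space $W(C_0,\ell^1)(\RR^{2d})$, equivalently that $\mathcal{S}_0$ is the coorbit of $L^1$; this is the point at which the convolution (Heisenberg-group) structure of the STFT enters.

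For the lower bound I would introduce a bounded uniform partition of unity $\{\psi_n\}$ subordinate to the cover $\{\overline{B(z_n,r(g))}\}$, that is $\sum_n \psi_n \equiv 1$, $0 \le \psi_n \le 1$, and $\supp \psi_n \subseteq \overline{B(z_n,r)}$, and compare the weighted sampled operator $\widetilde{S}\,F = \sum_n c_n F(z_n) R(\cdot,z_n)$, with $c_n = \int \psi_n$, against the reproducing projection $P$ onto $\Hcal$. Splitting $\int |V_gf(z)|^2\,dz = \sum_n \int \psi_n(z)|V_gf(z)|^2\,dz$ and replacing $V_gf(z)$ by $V_gf(z_n)$ on each patch, the resulting error is controlled by the $r$-oscillation of the kernel $V_g g$ measured in the amalgam norm, and by the $W(C_0,\ell^1)$ regularity this oscillation tends to $0$ as $r \to 0$. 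Hence there is $r(g)>0$ for which $\norm{P-\widetilde{S}}_{op} < 1$ on $\Hcal$, so that $\widetilde{S}$, and therefore the weighted frame operator $S_c = \sum_n c_n \inner{\cdot}{\pi(z_n)g}\pi(z_n)g$ unitarily equivalent to it via $V_g$, is invertible. Since $E$ is separated and relatively dense, the partition may be chosen with weights uniformly comparable to the ball volume, $0 < c_{\min} \le c_n \le c_{\max} < \infty$, whence $c_{\min}\,S_{g,E} \le S_c \le c_{\max}\,S_{g,E}$ as positive operators; combined with $S_c \geq \delta I$ and the upper bound this forces $S_{g,E}$ to be invertible on $L^2(\RR^d)$. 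Because the same perturbation estimate is available in the $\mathcal{S}_0$-norm, $S_{g,E}$ is invertible on $\mathcal{S}_0(\RR^d)$ as well, and applying $S_{g,E}^{-1}$ together with the self-adjointness of $S_{g,E}$ yields $f = \sum_n \inner{f}{\tau_{s_n}e_{\sigma_n}g}\,S_{g,E}^{-1}(\tau_{s_n}e_{\sigma_n}g)$, the series converging unconditionally in $\mathcal{S}_0(\RR^d)$.

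The main obstacle is the uniform amalgam-norm oscillation estimate for $V_g g$: proving $V_g g \in W(C_0,\ell^1)$ and that $\int \sup_{\norm{u}\le r}|V_g g(z+u)-V_g g(z)|\,dz \to 0$ as $r \to 0$ is exactly the step where $g \in \mathcal{S}_0(\RR^d)$ is indispensable, and it is the analytic heart of the coorbit method on the Heisenberg group that underlies Gr{\"o}chenig's theorem.
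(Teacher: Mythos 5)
First, a point of comparison: the paper does not prove Theorem \ref{thm:groch} at all. It is recalled verbatim from Gr{\"o}chenig's work (\cite{groc1991}, Theorem S; \cite{groc2001}, Theorem 13.1.1, with precursors in \cite{FeiGro1988}, \cite{FeiGro1989}), and the paper only remarks that the proof ``involves a significant analysis of convolution operators on the Heisenberg group.'' So your proposal must be measured against that cited coorbit-theoretic proof, and in outline you have reproduced its strategy faithfully: amalgam-space regularity $V_gg \in W(C_0,\ell^1)$, an oscillation estimate that improves as the covering radius shrinks, and a Neumann-series perturbation of the continuous inversion formula (\ref{eq:InversionSTFT}). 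The $L^2$ portion of your argument is sound: invertibility of the weighted operator $S_c$ by perturbation, followed by the quadratic-form comparison $S_c \leq c_{\max} S_{g,E}$ together with $S_c \geq \delta I$, yields the lower frame bound and hence invertibility of $S_{g,E}$ on $L^2(\mathbb{R}^d)$.

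The genuine gap is your last step, and it is not a small one, because invertibility on $\mathcal{S}_0(\mathbb{R}^d)$ with unconditional $\mathcal{S}_0$-convergence is the actual content of the theorem. Your passage from the weighted operator to $S_{g,E}$ is a positive-operator argument: it uses self-adjointness, positivity, and quadratic forms, and therefore only makes sense on a Hilbert space. Running ``the same perturbation estimate in the $\mathcal{S}_0$-norm'' gives invertibility on $\mathcal{S}_0$ of the \emph{weighted} operator $S_c$ only; you are then left with no mechanism to transfer this to the unweighted $S_{g,E}$, since an inequality such as $S_c \leq c_{\max} S_{g,E}$ cannot even be formulated on the Banach space $\mathcal{S}_0$. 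This transfer is precisely where the cited proof needs its heavy machinery (the full coorbit bootstrap, i.e., the analysis of convolution operators on the Heisenberg group): one must show directly that the unweighted $S_{g,E}$ is invertible on $\mathcal{S}_0$, equivalently that $S_{g,E}^{-1}$, which your argument produces only as an operator on $L^2$, maps $\mathcal{S}_0$ boundedly into itself. A secondary, fixable point: your claim that the partition weights satisfy $c_n \geq c_{\min} > 0$ is not automatic (a ball in the cover may be almost entirely covered by its neighbors); it can be arranged using separatedness, but as written it is an unjustified assertion --- fortunately the $L^2$ argument needs only the upper bound $c_n \leq c_{\max}$.
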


%%%%% EAY and JB non-uniform Gabor frame theorem%%%%%%%%%%%%%%%%%%
The following result can be compared with Theorem \ref{thm:groch}.

\begin{thm}
\label{thm:stft-frame}
Let $E = \{(s_{n}, \sigma_{n})\} \subseteq \mathbb{R}^d \times  \widehat{\mathbb{R}}^d $ be a separated sequence; and
let $\Lambda \subseteq \widehat{\mathbb{R}}^d \times \mathbb{R}^{d}$ be an S-set of strict multiplicity that is compact, convex, and symmetric about $0 \in \widehat{\mathbb{R}}^d \times \mathbb{R}^{d}.$
Assume balayage is possible for $(E, \Lambda)$.
Further, let $g \in L^2(\mathbb{R}^{d}),\, \widehat{g} = G,$ have the property that  $\norm{g}_2 = 1$.
We have that
$$
 \exists \ A, \ B > 0, \quad \text{ such that } \quad \forall f \in
 \mathcal{S}_0({\mathbb R}^d), \quad \text{for which } \quad {\rm supp}(\widehat{V_gf}) \subseteq \Lambda,
$$
\begin{equation}
\label{eq:stft-frame}
A \norm{f}_2^2  \leq  \, {\sum}_{n=1}^{\infty}  | V_{g} f(s_n, \sigma_n)|^{2}  \leq B \norm{f}_2^2.
\end{equation}
Consequently, the frame operator, $S = S_{g,E},$ is invertible in $L^2({\mathbb R}^d)$--norm
on the subspace of $\mathcal{S}_0(\mathcal{R}^d),$ whose elements $f$ have the property,
$supp\,(\widehat{V_gf}) \subseteq \Lambda.$

Moreover, every $f \in  \mathcal{S}_0(\mathbb{R}^d)$ satisfying the support
condition, ${\rm supp}(\widehat{V_gf}) \subseteq \Lambda,$ has a non-uniform Gabor expansion,
$$
f = {\sum}_{n=1}^{\infty} \langle f, \tau_{s_n} e_{\sigma_n} g \rangle S_{g,E}^{-1}(\tau_{s_n} e_{\sigma_n}g),
$$
where the series converges unconditionally in $L^2(\mathbb{R}^d)$.

($E$ does not depend on $g.$)

%%%%%%%%%%%%%%%%%%%%%%%%
\begin{proof}

{\it a.}  Using Theorem \ref{thm:balayage4} for the setting $ \mathbb{R}^d \times  \widehat{\mathbb{R}}^d$, where $h \in L^1(\mathbb{R}^d \times \widehat{\mathbb{R}}^d)$ from Ingham's theorem has the property that ${\rm supp}(\widehat{h}) \subseteq \overline{B(0, \epsilon)} \subseteq \widehat{\mathbb{R}}^d \times \mathbb{R}^{d},$ we compute
\begin{equation}
\label{eq:moyal-beurl}
 \int | f(x) |^2 \ dx = \int \int | V_gf(y, \omega) |^2 \ dy \ d\omega
\end{equation}
$$
    = \int \int \overline{V_{g}f(y, \omega)} {\sum}_{n=1}^{\infty} a_{s_n,{\sigma}_n}(y, \omega) 
    h(s_n - y, \sigma_n - \omega) V_gf(s_n, \sigma_n) \ dy \ d\omega,
$$
where
$$
    V_gf(y, \omega) =   {\sum}_{n=1}^{\infty} a_{s_n,{\sigma}_n}(y, \omega) 
    h(s_n - y, \sigma_n - \omega) V_gf(s_n, \sigma_n) 
$$
and
$$
        {\rm sup}_{(y,\omega) \in {\mathbb R}^d \times \widehat{\mathbb R}^d}\,
        \sum_{n=1}^{\infty}  |a_{s_n,{\sigma}_n}(y, \omega)| \leq K(E,{\Lambda}_\epsilon) < \infty.
$$  
Interchanging summation and integration on the right side of Equation (\ref{eq:moyal-beurl}),
we use H{\"o}lder's inequality to obtain
$$
    \int | f(x) |^2 \ dx \leq
$$
\begin{equation}
\label{eq:prodVbeurl}
   \left(\sum_{n=1}^{\infty} |V_{g}f(s_n,{\sigma}_n)|^2\right)^{1/2}\ \left(\sum_{n=1}^{\infty}|\int\int
    a_{s_n,{\sigma}_n}(y,\omega)
   h(s_n - y, {\sigma}_n - \omega)\ \overline{V_{g}f(y,\omega)}\ dy\ d{\omega}|^2\right)^{1/2}
\end{equation}
$$
     \leq {S_1}^{1/2}\ {S_2}^{1/2}.
$$

We bound the second sum $S_2$ using H{\"o}lder's inequality for the integrand,
$$
   [(a_{s_n,{\sigma}_n}(y,\omega))^{1/2}h(s_n - y, {\sigma}_n - \omega)] [(a_{s_n,{\sigma}_n}(y,\omega))^{1/2}
        \overline{V_{g}f(y,\omega)}],
$$
 as follows:  
 $$
   S_2 \leq \sum_{n=1}^{\infty} \left(\int \int|a_{s_n,{\sigma}_n}(y,\omega)||h(s_n - y, {\sigma}_n - \omega)|^2\,dy\ d{\omega}\ \int \int|a_{s_n,{\sigma}_n}(y,\omega)||V_{g}f(y,\omega)|^2\,dy\,d{\omega}\right)
$$
\begin{equation}
\label{eq:S2ineq}
   \leq K_1\ \sum_{n=1}^{\infty} \left(\int\int |h(s_n - y, {\sigma}_n -\omega)|^2\,dy\,d{\omega}\,\int\int
   |a_{s_n,{\sigma}_n}(y,\omega)||V_{g}f(y,\omega)|^2\,dy\,d{\omega}\right)
\end{equation}
$$
   = K_1\ \norm{h}_2^2 \int\int \left(\sum_{n=1}^{\infty} |a_{s_n,{\sigma}_n}(y,\omega)||V_{g}f(y,\omega)|^2\right)\ dy\ d{\omega}\:\leq\: K_1K_2 \norm{h}_2^2\norm{f}_2^2,
$$
where $K_1$ is a uniform bound on $\{a_{s_n,{\sigma}_n}(y,\omega)\},$ $K_2$ invokes the full power of
Theorem \ref{thm:balayage4}, and $\norm{f}_2^2 = \norm{V_gf}_2^2.$

Combining (\ref{eq:prodVbeurl}) and (\ref{eq:S2ineq}), we obtain
$$
   \norm{f}_2^2 \leq (S_1K_1K_2)^{1/2}\norm{h}_2\norm{f}_2,
$$
and so the left hand inequality of (\ref{eq:stft-frame}) is valid for $1/(K_1K_2\norm{h}_2^2).$

{\it b.} The right hand inequality of (\ref{eq:stft-frame}) follows directly from the P{\'o}lya-Plancherel theorem, cf. Theorem \ref{thm:1NonUniform}{\it b}.

\end{proof}

\end{thm}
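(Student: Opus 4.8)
The plan is to lift the problem from $\mathbb{R}^d$ to the time-frequency plane $\mathbb{R}^d \times \widehat{\mathbb{R}}^d$ and to apply the fundamental identity of balayage (Theorem~\ref{thm:balayage4}) not to $f$ itself but to its short-time Fourier transform $V_g f$, which, for $f \in \mathcal{S}_0(\mathbb{R}^d)$ satisfying $\supp(\widehat{V_g f}) \subseteq \Lambda$, is a continuous function on $\mathbb{R}^{2d}$ lying in $\mathcal{C}(\Lambda)$ over the $2d$-dimensional phase space. Since $\Lambda$ is a compact S-set of strict multiplicity and balayage is possible for $(E,\Lambda)$, Beurling's Theorem~\ref{theorem:balayage1} furnishes an $\epsilon>0$ with $K(E,\Lambda_\epsilon)<\infty$, and Ingham's Theorem~\ref{thm:balayage3} furnishes $h\in L^1(\mathbb{R}^d\times\widehat{\mathbb{R}}^d)$ with $h(0)=1$ and $\supp(\widehat{h})\subseteq\overline{B(0,\epsilon)}$. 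Applying Theorem~\ref{thm:balayage4} to $V_g f$ then produces the phase-space sampling expansion
$$V_g f(y,\omega) = \sum_{n=1}^{\infty} a_{s_n,\sigma_n}(y,\omega)\, h(s_n-y,\sigma_n-\omega)\, V_g f(s_n,\sigma_n),$$
together with the uniform summability bound $\sup_{(y,\omega)} \sum_{n} |a_{s_n,\sigma_n}(y,\omega)| \leq K(E,\Lambda_\epsilon)$ coming from (\ref{eq:JB7}). This expansion is the engine of the entire argument.

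For the lower frame bound I would begin from the fact that the STFT is an isometry (Moyal's identity), so that $\| f \|_2^2 = \|V_g f\|_{L^2(\mathbb{R}^{2d})}^2$ since $\| g \|_2 = 1$. Substituting the balayage expansion into one factor of $\int\int |V_g f|^2$, interchanging sum and integral, and applying the Cauchy--Schwarz inequality in $n$ separates off the sampled-energy factor $S_1 = \sum_n |V_g f(s_n,\sigma_n)|^2$. The residual factor $S_2$ is then controlled by a \emph{second} Hölder splitting of each integrand as $(a^{1/2}h)(a^{1/2}\,\overline{V_g f})$; the pointwise uniform bound on the $a_{s_n,\sigma_n}$, the finiteness of $\| h \|_2^2$, and the summability bound (\ref{eq:JB7}) then collapse $S_2$ to a constant multiple of $\|V_g f\|_2^2 = \| f \|_2^2$. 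Combining the two estimates gives $\| f \|_2^2 \leq (S_1 K_1 K_2)^{1/2}\| h \|_2\| f \|_2$, whence $A = 1/(K_1 K_2 \| h \|_2^2)$ is a lower frame bound. This half runs in close parallel to part \emph{a} of Theorem~\ref{thm:1NonUniform}, with $V_g f$ now playing the role that $F$ played there.

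For the upper frame bound the cleanest route is the Plancherel--Pólya theorem: because $V_g f$ is band-limited, with Fourier transform supported in the compact set $\Lambda$, and $E$ is separated, sampling on $E$ is bounded, so $\sum_n |V_g f(s_n,\sigma_n)|^2 \leq B\,\|V_g f\|_2^2 = B\,\| f \|_2^2$; this is the step flagged as Pólya--Plancherel and parallels part \emph{b} of Theorem~\ref{thm:1NonUniform}. Finally, noting that $|V_g f(s_n,\sigma_n)|$ equals, up to a unimodular phase, the Gabor coefficient $|\langle f, \tau_{s_n} e_{\sigma_n} g\rangle|$, the two inequalities (\ref{eq:stft-frame}) place us exactly in the setting of Definition~\ref{defn:frame} on the subspace $\{f \in \mathcal{S}_0 : \supp(\widehat{V_g f}) \subseteq \Lambda\}$, and the abstract frame machinery recalled in (\ref{eq:iso}) yields invertibility of $S_{g,E}$ on that subspace together with the unconditionally convergent reconstruction.

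The main obstacle I anticipate is the double application of Hölder's inequality in the estimate of $S_2$: the splitting must be arranged so that the uniform $\ell^1$-bound (\ref{eq:JB7}) on the balayage coefficients can be pulled outside the phase-space integral while the factor $\| h \|_2^2$ is simultaneously extracted, and making the pointwise uniform bound $K_1$ on individual $|a_{s_n,\sigma_n}(y,\omega)|$ cooperate with the uniform summability bound $K_2$ is the delicate point. It is precisely here that the full strength of Theorem~\ref{thm:balayage4}, and not merely the possibility of balayage, is indispensable. A secondary point to verify carefully is the legitimacy of invoking Theorem~\ref{thm:balayage4} in the product setting: one must confirm that the support condition $\supp(\widehat{V_g f}) \subseteq \Lambda$ genuinely places $V_g f$ in $\mathcal{C}(\Lambda)$ over $\mathbb{R}^d\times\widehat{\mathbb{R}}^d$, so that the hypotheses on $\Lambda$ and $E$ transfer verbatim to this $2d$-dimensional phase space.
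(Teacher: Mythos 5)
Your proposal is correct and follows essentially the same route as the paper's own proof: applying Theorem~\ref{thm:balayage4} to $V_gf$ in the $2d$-dimensional phase space, starting from the STFT isometry, separating $S_1=\sum_n|V_gf(s_n,\sigma_n)|^2$ via Cauchy--Schwarz, controlling $S_2$ by the same double H\"{o}lder splitting $(a^{1/2}h)(a^{1/2}\overline{V_gf})$ to reach $A=1/(K_1K_2\norm{h}_2^2)$, and invoking the P\'{o}lya--Plancherel theorem for the upper bound. The concerns you flag (the interplay of $K_1$ with $K_2$, and that $\supp(\widehat{V_gf})\subseteq\Lambda$ places $V_gf$ in $\mathcal{C}(\Lambda)$ over $\RR^d\times\Rhat^d$) are exactly the points the paper handles, so there is no gap.
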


%%%%%%%%%%%%%%%%%%%%%%%%%%%%%%%%%%%%%%%%%%%%%%%%

\begin{example}
\label{ex:suppFT}
{\it a.} In comparing Theorem \ref{thm:groch} with Theorem \ref{thm:stft-frame} a possible weakness of the former is the
dependence of $E$ on $g,$ whereas a possible weakness of the latter is the hypothesis that
${\rm supp}(\widehat{V_{g}f}) \subseteq \Lambda.$ We now show that this latter constraint is of
no major consequence.

Let $f,g \in L^{1}({\mathbb R}^d) \cap L^{2}({\mathbb R}^d).$ We know that $V_{g}f \in L^{2}({\mathbb R}^d \times
\widehat{{\mathbb R}}^d),$ and
$$
    \widehat{V_{g}f}(\zeta,z) = \int \int \left(\int f(t)\ g(t-x)\ e^{-2 \pi i t \cdot \omega}\ dt\right)\ e^{-2 \pi i(x \cdot \zeta + z \cdot \omega)}\ dx\ d{\omega}.
$$
The right side is
$$
   \int \int f(t)\ \left(\int g(t-x)\ e^{-2 \pi i x \cdot \zeta}\ dx\right)\ e^{-2 \pi i t\cdot \omega}
   \ e^{-2 \pi i  z \cdot \omega}\ dt\ d{\omega},
$$
where the interchange in integration follows from the Fubini-Tonelli theorem and the hypothesis that
$f,g \in L^{1}({\mathbb R}^d).$ This, in turn, is
$$
     \hat{g}(-\zeta)\ \int \left(\int f(t)\ e^{-2 \pi i t\cdot \zeta}\ e^{-2 \pi i t\cdot \omega}\ dt\right)\ e^{-2 \pi i z \cdot \omega}\ d{\omega}
$$
$$
   = \hat{g}(-\zeta)\ \int \hat{f}(\zeta + \omega)\ e^{-2 \pi i  z \cdot \omega}\ d{\omega} = e^{-2 \pi i z \cdot \zeta}\ f(-z)\ \hat{g}(-\zeta).
$$
Consequently, we have shown that if $f,g \in L^{1}({\mathbb R}^d) \cap L^{2}({\mathbb R}^d),$ then
\begin{equation}
\label{eqn:suppFT}
  f,g \in L^{1}({\mathbb R}^d) \cap L^{2}({\mathbb R}^d), \quad
   \widehat{V_{g}f}(\zeta,z) =  e^{-2 \pi i z \cdot \zeta}\ f(-z)\ \hat{g}(-\zeta).
\end{equation}

Let $d=1$ and let $\Lambda = [-\Omega,\Omega] \times [-T,T] \subseteq \widehat{{\mathbb R}}^d \times
{\mathbb R}^d.$ We can choose $g \in PW_{[-\Omega,\Omega]},$ where $\hat{g}$ is even and smooth enough so that
$g \in L^{1}(\mathbb R).$ For this window $g,$ we take any even $f \in L^{2}(\mathbb R)$ which is supported in
$[-T,T].$ Equation (\ref{eqn:suppFT}) applies.

{\it b.} Theorems \ref{thm:groch} and \ref{thm:stft-frame} give non-uniform Gabor frame expansions. Generally,
for $g \in L^2({\mathbb R})$, if $\{e_{\sigma_n}{\tau_{s_n}}g\}$ is a frame for $L^2({\mathbb R}),$ then
$E = \{s_n,\sigma_n\} \subseteq {\mathbb R} \times \widehat{\mathbb R}$ is a finite union of separated sequences and
$D^{-}(E) \geq 1,$ where $D^{-}$ denotes the lower Beurling density, \cite{ChrDenHei1999}. (Beurling density has been
analyzed deeply in terms of Fourier frames, e.g., \cite{beur1989}, \cite{land1967}, \cite{jaff1991}, and \cite{seip1995},
and it is defined as
$$
         D^{-}(E) = {\rm lim}_{r \rightarrow \infty}\ \frac{n^{-}(r)}{r^2},
$$
where $n^{-}(r)$ is the minimal number of points from $E \subseteq {\mathbb R} \times \widehat{\mathbb R}$
in a ball of radius $r/2$.) For perspective, in the case
of $\{e_{mb}{\tau}_{na}g : m,n \in {\mathbb Z}\}$, this necessary condition is equivalent to the condition $ab \leq 1.$
It is also well-known that if $ab > 1,$ then  $\{e_{mb}{\tau}_{na}g : m,n \in {\mathbb Z}\}$ is not complete in $L^2({\mathbb R}).$
As such, it is not unexpected that $\{e_{\sigma_n}{{\tau}_{s_n}}g\}$ is incomplete if $D^{-}(E) < 1;$ however, this is not the case as has been shown by explicit construction, see \cite{BenHeiWal1995}, Theorem 2.6. Other sparse complete
Gabor systems have been constructed in \cite{rome2002} and \cite{wang2004}.

\end{example}

%%%%%%%%%%%%%%%%%%%%%%%%%%%%%%%%%%%%%%%
\begin{example}
\label{ex:exXmu}
{\it a.} Let $(X, \mathcal{A}, \mu)$ be a measure space, i.e., $X$ is a set, $\mathcal{A}$ is a $\sigma-$algebra in the power set $\mathcal{P}(X)$, and $\mu$ is a measure on $\mathcal{A}$, see \cite{BenCza2009}.  Let $H$ be a complex, separable Hilbert space.  Assume
$$
\mathcal{F} \colon X \rightarrow H
$$
is a weakly measurable function in the sense that for each $f \in H,$ the complex-valued mapping $x \mapsto \langle f, \mathcal{F}(x)\rangle$ is measurable.  $\mathcal{F}$ is a $(X, \mathcal{A}, \mu)$--{\it frame} for $H$ if
\[ \exists \ A, B > 0 \mbox{ such that } \forall \ f \in H,\quad
A \| f \|^2 \leq \int_X | \langle f, \mathcal{F}(x) \rangle |^2 \ d\mu(x) \leq B \| f \|^2. \]
Typically, $\mathcal{A}$ is the Borel algebra $\mathcal{B}(\mathbb{R}^d)$ for $X = \mathbb{R}^d$ and $\mathcal{A} = \mathcal{P}(\mathbb{Z})$ for $X = \mathbb{Z}.$  In these cases we use the terminology, $(X, \mu)$-frame.

{\it b.} Continuous and discrete wavelet and Gabor frames are special cases of $(X, \mathcal{A}, \mu)$-frames and could have been formulated as such from the time of \cite{daub1992} (1986) and \cite{HeiWal1989} (1989).
In mathematical physics the idea was introduced in \cite{kais1990},
\cite{AliAntGaz1993}, and \cite{AliAntGaz2000}.  Recent mathematical contributions are found in 
\cite{GabHan2003} and \cite{ForRau2005}.
$(X, \mathcal{A}, \mu)$-frames are sometimes referred to as {\it continuous frames.}  Also, in a slightly more concrete way we could have let $X$ be a locally compact space and $\mu$ a positive Radon measure on $X$.

{\it c.} Let $X = \mathbb{Z}, \mathcal{A} = \mathcal{P}(\mathbb{Z})$, and $\mu = c,$ where $c$ is counting measure, $c(Y) = \mbox{card}(Y)$.  Define $\mathcal{F}(n) = x_n \in H, n \in \mathbb{Z},$ for a given complex, separable Hilbert space, $H.$  We have
$$
   \forall \ f \in H, \ \int_{\mathbb{Z}} | \langle f, x_n \rangle |^2 \ d\ c(n)
 = \sum_{n \in \mathbb{Z}} \int_{\{n\}} | \langle f, x_n \rangle |^2 \ d\ c(n) = \sum_{n \in \mathbb{Z}} | \langle f, x_n \rangle |^2.
$$
Thus, {\it frames} $\{ x_n \}$ for H, as defined in Definition \ref{defn:frame}, are $(\mathbb{Z}, \mathcal{P}(\mathbb{Z}), c)$--frames.  For the present discussion we also refer to them as {\it discrete frames.}

{\it d.} Let $X = \mathbb{R}^d, \mathcal{A} = \mathcal{B}(\mathbb{R}^d)$, and $\mu = p$ a probability measure, i.e. $p(\mathbb{R}^d) = 1$; and let $H = \mathbb{R}^d.$  The measure $p$ is a {\it probabilistic frame} for $H = \mathbb{R}^d$ if
$$
 \exists \ A, B > 0 \mbox{ such that } \forall \ x \in \mathbb{R}^d \ (= H),\quad
 A \| x \|^2 \leq \int_X | \langle x, y \rangle |^2   \ d\ p(y) \leq B \| x \|^2,
$$
see \cite{ehle2012}, \cite{EhlOko2013}.
Define
$$
\mathcal{F} \colon X = \mathbb{R}^d \rightarrow H = \mathbb{R}^d
$$
by $\mathcal{F}(x) = x \in \mathbb{R}^d.$  Suppose $\mathcal{F}$ is a $(\mathbb{R}^d, \mathcal{B}(\mathbb{R}^d), p)$-frame for $H = \mathbb{R}^d.$  Then
$$
\forall \ x \in H, \quad A \| x \|^2 \leq \int_X | \langle x, y \rangle |^2   \ d \ p(y) \leq B \| x \|^2,
$$
and this is precisely the same as saying that $p$ is a probabilistic frame for $H = \mathbb{R}^d.$

Suppose we try to generalize probabilistic frames to the setting that X is locally compact, as well as being a vector space because of probabilistic applications.  This simple extension can not be effected since Hausdorff, locally compact vector spaces are, in fact, finite dimensional (F. Riesz).

{\it e.}  Let $(X, \mathcal{A}, \mu)$ be a measure space and let $H$ be a complex, separable Hilbert space.  A {\it positive operator-valued measure} ($POVM$) is a function $\pi \colon \mathcal{A} \rightarrow \mathcal{L}(H),$ where $\mathcal{L}(H)$ is the space of the bounded linear operators on $H$, such that $\pi(\emptyset) = 0, \pi(X) = I$ (Identity), $\pi(A)$ is a positive, and therefore self-adjoint (since H is a complex vector space), operator on $H$ for each $A \in \mathcal{A},$ and
$$
\forall \ \mbox{ disjoint } \{A_j \}_{j=1}^{\infty} \subseteq \mathcal{A}, \quad x, y \in H \implies \langle \pi \left(\cup_{j=1}^{\infty} A_j \right) x, y \rangle = \sum_{j=1}^{\infty} \langle \pi (A_j) x, y \rangle.
$$
$POVMs$ are a staple in quantum mechanics, see \cite{AliAntGaz2000}, \cite{BenKeb2008}
for rationale and references.  If $\{x_n\} \subseteq H$ is a 1-tight discrete frame for $H$, then it is elementary to see that the formula,
$$
 \forall \ x \in H \mbox{ and } \forall \ A \in \mathcal{P}(\mathbb{Z}), \ \pi(A) x = \sum_{n \in A} \langle x, x_n \rangle x_n,
$$
defines a $POVM.$  Conversely, if $H = \mathbb{C}^d$ and $\pi$ is a $POVM$ for $X$ countable, then by the spectral theorem there is a corresponding 1-tight discrete frame.  This relationship between tight frames and $POVMs$ extends to more general $(X, \mathcal{A}, \mu)$-frames, e.g., \cite{AliAntGaz2000}, Chapter 3.

In this setting, and related to {\it probability of quantum detection error,} $P_e$,  which is defined in terms of $POVMs,$ Kebo and one of the authors have proved the following for $H = \mathbb{C}^d, \{ y_j \}_{j=1}^{N} \subseteq H,$ and $\{\rho_j > 0 \}_{j=1}^N, \sum_{j=1}^N \rho_j = 1 \colon$ there is a 1-tight discrete frame $\{x_n\}_{n=1}^N \subseteq H$ for $H$ that minimizes $P_e$, \cite{BenKeb2008}, Theorem A.2.

{\it f.} Let $X = \mathbb{R}^{2d}$ and let $H = L^2(\mathbb{R}^d)$.  Given $g \in L^2(\mathbb{R}^d)$ and define the function
\begin{align*}
 \mathcal{F} \colon  \mathbb{R}^{2d} & \rightarrow L^2( \mathbb{R}^d) \\
  (x, \omega) & \mapsto e^{2 \pi i t \cdot \omega} \ g(t - x).
\end{align*}
$\mathcal{F}$ is a $(\mathbb{R}^{d}, \mathcal{B}(\mathbb{R}^{2d}), m)$-frame for $L^2(\mathbb{R}^{2d}),$ where $m$ is Lebesgue measure on $\mathbb{R}^{2d}$; and, in fact, it is a tight frame for $L^2(\mathbb{R}^{d})$ with frame constant $A = B = \| g \|_2^2.$  To see this we need only note the following consequence of the orthogonality relations for the $STFT$:
\begin{equation}
\label{eq:STFT_orthogonality}
\| V_gf \|_2 = \| g \|_{L^2(\mathbb{R}^{d}) } \| f \|_{ L^2(\mathbb{R}^{d})}.
\end{equation}
Equation (\ref{eq:STFT_orthogonality}) is also used in the proof of (\ref{eq:InversionSTFT}).

{\it g.} Clearly, Theorems \ref{thm:1NonUniform}, \ref{thm:groch}, and \ref{thm:stft-frame} can be formulated in terms of
$(X,\mu)$--frames.
\end{example}

%%%%%%%%%%%%%%%%%%%%%%%%%%%%%%%%%%%%
%%%%%%%%%%%%%%%%%%%%%%%%%%%%%%%%%%%%%
%%%%%%%%%%%%%%%%%%%%%%%%%%%%%%%%%%%%%%

\section{Examples and modifications of Beurling's method}
\label{sec:ex}

\subsection{Generalizations of Beurling's Fourier frame theorem}
\label{sec:123-weights}

Using more than one measure, we can extend Theorem \ref{theorem:balayage2} to more general types of Fourier frames.  For clarity we give the result for three simple measures.

\begin{lem}\label{lemma:Fourier1}
Given the notation and hypotheses of Theorems \ref{thm:balayage3} and \ref{thm:balayage4}.  Then,
\[   \forall f \in PW_{\Lambda}\setminus \{0\},\, \widehat{f} = F,\]
\[ \sum_{x \in E} \left| \int a_{x}(y) h(x-y) f(y) \,dy \right|^2 \leq [K(E,\Lambda_{\epsilon}) \norm{h}_2]^2 \int_{\Lambda} |F(\gamma)|^2 \ d\gamma.\]

\begin{proof}
We compute:

\begin{align*}
& \sum_{x \in E} \left|\int a_x(y)h(x-y)f(y)\,dy \right|^2 \\
&\leq \sum_{x\in E} \left| \left(\int | a_x(y)^{1/2} h(x-y)|^2 \,dy\right)^{1/2} \left(\int |a_x(y)^{1/2} f(y)|^2 \,dy \right)^{1/2} \right|^{2}\\
&\leq \sup_{x\in E} \left( \int | a_x(y) | | h(x-y)|^2 \,dy\right) \left(\sum_{x\in E} \int |a_x(y)| | f(y)|^2 \,dy \right)\\
&\leq K(E, \Lambda_\epsilon) \sup_{x\in E} \left(\int |a_{x}(y)| |h(x-y)|^2 \,dy \right) \int_{\Lambda} |F(\gamma)|^2\,d\gamma \\
&\leq K(E, \Lambda_\epsilon)^{2} \norm{h}_2^2\int_{\Lambda} |F(\gamma)|^2\,d\gamma,
\end{align*}
where we have used the Plancherel theorem to obtain the third inequality.

\end{proof}

\end{lem}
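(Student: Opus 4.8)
The plan is to reduce the claimed quadratic inequality to two ingredients that are already in hand: the uniform $\ell^1$-control of the balayage coefficients $\{a_x(y)\}$ provided by \eqref{eq:JB7} of Theorem \ref{thm:balayage4}, and the Plancherel identity $\norm{f}_2^2 = \int_\Lambda |F(\gamma)|^2\,d\gamma$, which is available because $f \in PW_\Lambda$. The whole estimate is then driven by a single algebraic device, namely splitting the integrand symmetrically across the weight $|a_x(y)|$ and applying Cauchy--Schwarz in the $y$-variable.

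First I would bound each inner integral individually. Writing the integrand as $\bigl[|a_x(y)|^{1/2}|h(x-y)|\bigr]\cdot\bigl[|a_x(y)|^{1/2}|f(y)|\bigr]$, an application of Cauchy--Schwarz in $y$ gives
$$\left|\int a_x(y)\,h(x-y)\,f(y)\,dy\right|^2 \leq \left(\int |a_x(y)|\,|h(x-y)|^2\,dy\right)\left(\int |a_x(y)|\,|f(y)|^2\,dy\right).$$
I would then sum over $x \in E$ and separate the two factors: the first is absorbed into a supremum over $x$, and in the second I interchange summation and integration to produce $\int\bigl(\sum_{x \in E}|a_x(y)|\bigr)|f(y)|^2\,dy$.

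Two invocations of \eqref{eq:JB7} finish the argument. For the second factor, the bound $\sum_{x\in E}|a_x(y)| \leq K(E,\Lambda_\epsilon)$ together with Plancherel dominates it by $K(E,\Lambda_\epsilon)\int_\Lambda|F(\gamma)|^2\,d\gamma$. For the supremum factor, the same estimate yields the pointwise inequality $|a_x(y)| \leq K(E,\Lambda_\epsilon)$ for each individual $x$, so that $\int |a_x(y)|\,|h(x-y)|^2\,dy \leq K(E,\Lambda_\epsilon)\int|h(x-y)|^2\,dy = K(E,\Lambda_\epsilon)\,\norm{h}_2^2$ by translation invariance of Lebesgue measure. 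Multiplying the two bounds produces exactly the constant $[K(E,\Lambda_\epsilon)\norm{h}_2]^2$.

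There is no genuine obstacle here; the only point demanding care is the bookkeeping that keeps the two distinct appearances of $K(E,\Lambda_\epsilon)$ honest --- one arising from $\ell^1$-summability of $\{a_x(y)\}$ on the $f$-factor, the other from the uniform pointwise bound on the $h$-factor --- so that the final constant is quadratic in $K(E,\Lambda_\epsilon)$ and not larger. It is also worth recording that the hypothesis that balayage is possible for $(E,\Lambda_\epsilon)$, i.e. $K(E,\Lambda_\epsilon)<\infty$, is precisely what guarantees that every quantity appearing in the chain of inequalities is finite.
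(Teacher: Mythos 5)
Your proposal is correct and follows essentially the same route as the paper's own proof: the symmetric Cauchy--Schwarz split across $|a_x(y)|^{1/2}$, the separation into a supremum factor and a summed factor, and the two invocations of the bound from \eqref{eq:JB7} (once pointwise for the $h$-integral, once as $\ell^1$-summability for the $f$-integral, followed by Plancherel) are exactly the steps in the paper. Your write-up is, if anything, slightly more explicit than the paper's in justifying the pointwise bound $|a_x(y)| \leq K(E,\Lambda_\epsilon)$ and the translation invariance giving $\int |h(x-y)|^2\,dy = \norm{h}_2^2$.
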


%%%%%%%%%%%%%%%%%%%%%%%%%%%%%%%%%%
\begin{thm}
\label{theorem:Generalized_fourier_frames}
Let $E =\{x_n\} \subseteq \RR^d$ be a separated sequence, and let $\Lambda \subseteq \Rhat^d$ be a compact S-set of strict multiplicity.
Assume that $\Lambda$ is a compact, convex set, that is symmetric about $0 \in \Rhat^d$. If balayage is possible for $(E, \Lambda)$, then
\[\exists \ A, B > 0 \text{ \rm{ such that} } \forall f \in PW_{\Lambda}\setminus \{0\},\, F = \widehat{f},\]
\begin{align}\begin{split}\label{eq:a}
& A^{1/2} \frac{\int_{\Lambda} |F(\gamma) + F(2 \gamma) + F(3 \gamma)|^2 \ d\gamma}{\left( \int_{\Lambda} |F(\gamma)|^{2} \ d \gamma \right)^{1/2}}\\
&\leq \left(\sum_{x \in E} |f(x)|^{2}\right)^{1/2} + \frac{1}{2}\left(\sum_{x \in E} |f(\frac{1}{2} x)|^{2}\right)^{1/2} + \frac{1}{3}\left(\sum_{x \in E} |f(\frac{1}{3} x)|^{2}\right)^{1/2} \\
&\leq \ B^{1/2} \left(\int_{\Lambda} |F(\gamma)|^2 \,d\gamma \right)^{1/2}.
\end{split}
\end{align}
\vspace{0.2in}
\begin{proof}
By hypothesis, we can invoke Theorem \ref{theorem:balayage1} to choose $\epsilon > 0$ so that
balayage is possible for $(E, \Lambda_{\epsilon})$, i.e., $K(E, \Lambda_{\epsilon}) < \infty$.
For this $\epsilon > 0$ and appropriate $\Omega,$ we use Theorem \ref{thm:balayage3} to choose $h \in
L^{1}(\mathbb{R}^{d})$ for which $h(0) = 1, \supp(\widehat{h}) \subseteq \overline{B(0,\epsilon)},$ and $|h(x)| = O(e^{- \Omega(\|x\|)} ), \|x\| \rightarrow \infty.$

Therefore, for a fixed $y \in \mathbb{R}^{d}$ and $g \in \mathcal{C}(\Lambda)$,
Theorem \ref{thm:balayage4} allows us to assert that
\begin{eqnarray*}
& & g(y) + g(2y) + g(3y) \\
& = & \sum_{x \in E} g(x) \left( a_x(y) h(x-y) + a_x(2y) h(x - 2y) + a_x(3y) h(x - 3y) \right)
\end{eqnarray*}
and
\[ \sum_{x \in E} \left|a_x(j y)\right| \leq K(E, \Lambda_{\epsilon}), \ j = 1,2,3. \]
Hence, if $\gamma \in \Lambda$ is fixed and $g(w) = e^{-2 \pi i w \cdot \gamma},$ then
\begin{eqnarray*}
& & e^{-2 \pi i y \cdot \gamma} + e^{-2 \pi i (2y) \cdot \gamma} + e^{-2 \pi i (3y) \cdot \gamma} \\
& = & \sum_{x \in E} \left( a_x(y)  h(x - y) + a_x(2y)  h(x - 2y) + a_x(3y)  h(x - 3y) \right) \ e^{-2 \pi i x \cdot \gamma},\\
\end{eqnarray*}
which we write as
\[ \sum_{x \in E} b_x(y) e^{-2 \pi i x \cdot \gamma}.\]

Since $L^{1}(\mathbb{R}^d) \cap PW_{\Lambda}$ is dense in $PW_{\Lambda},$ we take
$f \in L^{1}(\mathbb{R}^d) \cap PW_{\Lambda}$ in the following argument 
without loss of generality.  We compute
\begin{eqnarray*}
& & \sum_{x \in E} e^{-2 \pi i x \cdot \gamma} \int b_x(y) f(y) \,dy  \\
& = & \int f(y) \left( \sum_{x \in E} b_x(y)  e^{-2 \pi i x \cdot \gamma} \right)  \,dy \\
 & = & \int f(y) \left( e^{-2 \pi i y \cdot \gamma} + e^{-2 \pi i (2y) \cdot \gamma} + e^{-2 \pi i (3y) \cdot \gamma} \right) \,dy \\
& = & F(\gamma) + F(2 \gamma) + F(3 \gamma) = J_F(\gamma).
\end{eqnarray*}
As such, we have
\[J_F(\gamma) = \sum_{x \in E} \widetilde{f}(x)  e^{-2 \pi i x \cdot \gamma}, \quad \text{where }\widetilde{f}(x) = \int b_x(y) f(y) \,dy.\]
Next, we compute the following inequality for the inner product
$ \langle J_F, J_F \rangle_{\Lambda}:$
\begin{align}\label{eq:b}
&\int_{\Lambda} J_F(\gamma) \overline{J_F(\gamma) } \ d\gamma = \ \int_{\Lambda} J_F(\gamma) \left( \sum_{x \in E} \overline{\widetilde{f}(x)} e^{2 \pi i x \cdot \gamma} \right) d\gamma \nonumber\\
&=\sum_{x \in E} \overline{\widetilde{f}(x)} \left( \int_{\Lambda} J_F(\gamma) e^{2 \pi i x \cdot \gamma} \,d\gamma \right) \ =  \ \sum_{x \in E} \overline{\widetilde{f}(x)} \left( f(x) + \frac{1}{2} f(\frac{x}{2}) + \frac{1}{3}f(\frac{x}{3}) \right)\\
&\leq \left( \sum_{x \in E} | \widetilde{f}(x)|^2 \right)^{1/2} \left( \sum_{x \in E} \left| f(x) + \frac{1}{2} f(\frac{x}{2}) + \frac{1}{3}f(\frac{x}{3}) \right|^2 \right)^{1/2}  \nonumber\\
&\leq \left(\sum_{x \in E} |\widetilde{f}(x)|^2 \right)^{1/2}  \left[ \left(\sum_{x \in E} |f(x)|^2 \right)^{1/2}  + \frac{1}{2} \left(\sum_{x \in E} |f(\frac{x}{2})|^2 \right)^{1/2} + \frac{1}{3} \left(\sum_{x \in E} |f(\frac{x}{3} )|^2 \right)^{1/2}   \right]\nonumber
\end{align}
by H{\"o}lder's and Minkowski's inequalities.
 Further, there is $A > 0$ such that
\begin{equation}
\label{eq:c}
\sum_{x \in E} |\widetilde{f}(x)|^2 \leq \frac{1}{A} \int_{\Lambda} |F(\gamma)|^2 \,d\gamma. \hspace{2in}
\end{equation}
This is a consequence of Lemma \ref{lemma:Fourier1}.  Combining the definition of $J_F$ with the inequalities
\eqref{eq:b} and \eqref{eq:c} yield the first inequality of \eqref{eq:a}.

The second inequality of \eqref{eq:a} only requires the assumption that $E$ be separated, and, as such, it is a consequence of the
Plancherel-P\'{o}lya theorem, which asserts that if $E$ is separated, then
\[\exists \ B_j \text{ such that } \forall \ f \in PW_{\Lambda}, \]
\[\sum_{x \in E} \left| f\left(\frac{x}{j}\right)\right|^2 \leq B_j \ \| f \|_{2}^{2}, \ j = 1,2,3,\]
see \cite{bene1992}, pages 474-475, \cite{land1967}, \cite{SteWei1971}, pages 109-113.
\end{proof}
\end{thm}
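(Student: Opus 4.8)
The plan is to establish the two inequalities of \eqref{eq:a} separately, exploiting the asymmetry in what each requires: the lower bound is the substantive half and rests on the balayage identity of Theorem \ref{thm:balayage4}, whereas the upper bound is soft and follows from separation of $E$ alone. After invoking Theorem \ref{theorem:balayage1} to select $\epsilon > 0$ with $K(E,\Lambda_\epsilon) < \infty$ and Ingham's Theorem \ref{thm:balayage3} to produce the kernel $h$, I would first record that the convexity and symmetry of $\Lambda$ guarantee $\tfrac12\Lambda \subseteq \Lambda$ and $\tfrac13\Lambda\subseteq\Lambda$, so that the dilates $\gamma\mapsto F(2\gamma)$ and $\gamma\mapsto F(3\gamma)$ are again supported in $\Lambda$ and the function $J_F(\gamma) = F(\gamma)+F(2\gamma)+F(3\gamma)$ lies in $L^2(\Lambda)$.

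The engine is Theorem \ref{thm:balayage4}: applying it to the character $g(w)=e^{-2\pi i w\cdot\gamma}$ at the three points $y, 2y, 3y$ and summing yields
\[
e^{-2\pi i y\cdot\gamma}+e^{-2\pi i(2y)\cdot\gamma}+e^{-2\pi i(3y)\cdot\gamma}=\sum_{x\in E} b_x(y)\,e^{-2\pi i x\cdot\gamma},
\]
where $b_x(y)=a_x(y)h(x-y)+a_x(2y)h(x-2y)+a_x(3y)h(x-3y)$ and each partial coefficient sum is controlled by $K(E,\Lambda_\epsilon)$. Pairing this against $f$, and working on the dense subspace $L^1\cap PW_\Lambda$ to justify interchanging sum and integral, expresses $J_F$ as a nonharmonic Fourier series $J_F(\gamma)=\sum_x \widetilde f(x)\,e^{-2\pi i x\cdot\gamma}$ with $\widetilde f(x)=\int b_x(y)f(y)\,dy$.

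Next I would compute $\|J_F\|_{L^2(\Lambda)}^2=\langle J_F, J_F\rangle_\Lambda$ by substituting the series into one factor. The key identity to verify is
\[
\int_\Lambda J_F(\gamma)\,e^{2\pi i x\cdot\gamma}\,d\gamma = f(x)+\tfrac12 f\!\left(\tfrac{x}{2}\right)+\tfrac13 f\!\left(\tfrac{x}{3}\right),
\]
which follows from the inversion $\int_\Lambda F(\gamma)e^{2\pi i x\cdot\gamma}\,d\gamma=f(x)$ together with a change of variables in the dilated terms, the support containments above ensuring the pushforwards land back in $\Lambda$ and producing the normalizing constants displayed in \eqref{eq:a}. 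Applying the Cauchy--Schwarz inequality in $\ell^2(E)$ and then Minkowski's inequality to the right factor splits the estimate into the three sums $\bigl(\sum|f(x)|^2\bigr)^{1/2}$, $\tfrac12\bigl(\sum|f(x/2)|^2\bigr)^{1/2}$, and $\tfrac13\bigl(\sum|f(x/3)|^2\bigr)^{1/2}$. It remains to bound the coefficient factor $\sum_x|\widetilde f(x)|^2$; this is precisely where Lemma \ref{lemma:Fourier1} enters, giving $\sum_x|\widetilde f(x)|^2\le C\int_\Lambda|F|^2$ with $C$ expressed through $K(E,\Lambda_\epsilon)$ and $\|h\|_2$, and thus producing the constant $A$.

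The upper inequality requires nothing from balayage: since $E$ is separated, the Plancherel--P\'olya theorem supplies constants $B_j$ with $\sum_{x\in E}|f(x/j)|^2\le B_j\|f\|_2^2=B_j\int_\Lambda|F|^2$ for $j=1,2,3$, and combining these three estimates gives the right-hand bound with $B$ built from $B_1,B_2,B_3$. I expect the main obstacle to be the bookkeeping in the central identity for $\int_\Lambda J_F\,e^{2\pi i x\cdot\gamma}\,d\gamma$: one must track the dilation factors and the shrinking supports $\tfrac1j\Lambda$ correctly and justify every Fubini-type interchange, which is why passing to the dense class $L^1\cap PW_\Lambda$ at the outset is essential.
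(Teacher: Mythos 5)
Your proposal is correct and follows essentially the same route as the paper's own proof: Theorem \ref{thm:balayage4} applied to the characters at $y$, $2y$, $3y$ to produce the coefficients $b_x(y)$ and the nonharmonic expansion $J_F(\gamma)=\sum_{x\in E}\widetilde f(x)e^{-2\pi i x\cdot\gamma}$, then Cauchy--Schwarz and Minkowski on $\langle J_F,J_F\rangle_\Lambda$, Lemma \ref{lemma:Fourier1} for the bound on $\sum_{x\in E}|\widetilde f(x)|^2$, and the Plancherel--P\'olya theorem for the upper inequality. Your explicit remarks on the containments $\tfrac12\Lambda,\tfrac13\Lambda\subseteq\Lambda$ and on the dilation bookkeeping behind $\int_\Lambda J_F(\gamma)e^{2\pi i x\cdot\gamma}\,d\gamma$ merely make precise steps the paper leaves implicit.
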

%%%%%%%%%%%%%%%%%%%%%%%%%%%%%%%

Theorem \ref{theorem:Generalized_fourier_frames} can be generalized extensively.\\
%%%%%%%%%%%%%%%%%%%%%%%%%%%%%%%%

\begin{example}
Given the setting of Theorem \ref{theorem:Generalized_fourier_frames}.

{\it a.} Define the set $\{e_{j,x}^\vee : j = 1, 2, 3 \text{ and } x \in E\}$ of functions on $\RR^d$ by
$$e_{j,x}(\gamma) = \frac{1}{j} \mbbu_\Lambda (\gamma)e^{-2 \pi i (1/j)x \cdot \gamma},$$
and define the mapping $S: PW_\Lambda \rightarrow PW_\Lambda$ by
$$Sf = \sum_{j=1}^3 \sum_{x \in E} \inner{f}{e_{j,x}^\vee}e_{j,x}^\vee.$$
We compute
$$\forall f \in PW_\Lambda, \quad \inner{Sf}{f} = \sum_{j=1}^3 \frac{1}{j^2} \sum_{x \in E} \left|f\left(\frac{x}{j}\right)\right|^2.$$

{\it b.} Let $f \in PW_\Lambda$, $\widehat{f} = F$, and define $J_F(\gamma) = F(\gamma) + F(2 \gamma) + F(3 \gamma)$. Since $(a + b + c)^2 \leq 3(a^2+b^2+c^2)$ for $a, b, c \in \RR$, Theorem \ref{theorem:Generalized_fourier_frames} and part {\it a} allow us to write the frame-type inequality,
\begin{equation}
\label{eq:x}
\frac{A}{3} \frac{\inner{J_F}{J_F}^2}{\norm{F}_2}\leq \inner{Sf}{f} = \norm{Lf}_{\ell^2}^2 \leq B \norm{f}_2^2,
\end{equation}
where $Lf = \{\inner{f}{e_{j,x}^\vee}: j = 1, 2, 3 \text{ and } x \in E\}$ so that $S = L^\ast L$. The inequalities \eqref{eq:x} do not a priori define a frame for $PW_\Lambda$. However, $\{e_{j,x}: j = 1, 2, 3 \text{ and } x \in E\}$ is a frame for $PW_\Lambda$ with frame operator $S$. This is a consequence of Theorem \ref{theorem:balayage2}.
\end{example}
%%%%%%%%%%%%%%%%%%%%%%%%%%%%%%%

\begin{thm}
\label{theorem:weightedbalayage}
Let $E = \{x_{n}\} \subseteq \mathbb{R}^d$ be a separated sequence, and let
$\Lambda \subseteq \widehat{\mathbb{R}}^d$ be an S-set of strict multiplicity.
Assume that $\Lambda$ is a compact, convex set, that is symmetric about $0 \in \Rhat^d$.
Further, let $G \in L^\infty(\RR^d)$ be non-negative on $\Rhat^d$. If balayage is
possible for $(E, \Lambda)$, then
\[ \exists \ A, B > 0,  \text{ such that } \forall \ f \in PW_\Lambda \setminus \{0\}, F = \widehat{f},\]
\begin{align}\label{eq:X}
A \frac{\left(\int_{\Lambda} |F(\gamma)|^2 \ G(\gamma) \,d\gamma\right)^2}{\int_{\Lambda} |F(\gamma)|^2 \,d\gamma}
& \leq \sum_{x \in E} |\left(F \ G \right)^\vee (x) |^{2}\\
& \leq B \int_\Lambda \left| F(\gamma) \right|^2 \,d\gamma \nonumber.
\end{align}
We can take $A = 1/\left(K(E, \Lambda_\epsilon)\norm{h}_2^2 \right)$ and $B = B_1
\norm{G}_\infty^2$, where $B_1$ is the Bessel bound in the Plancherel-P\'{o}lya theorem for $PW_\Lambda$.

\begin{proof}
By hypothesis, we can invoke Theorem \ref{theorem:balayage1} to choose $\epsilon > 0$
so that balayage is possible for $(E, \Lambda_\epsilon)$, i.e., $K(E, \Lambda_\epsilon) < \infty$.
For this $\epsilon >0$ and appropriate $\Omega$, we use Theorem \ref{thm:balayage3} to choose $h \in L^1(\RR^d)$ for which $h(0) = 1$, $\supp{\widehat{h}} \subseteq \overline{B(0, \epsilon)}$, and $|h(x)| = O(e^{-\Omega(\norm{x})}), \norm{x} \rightarrow \infty$.
Consequently, we have
\[\forall \ y \in \mathbb{R}^d \text{ and } \forall \ \gamma \in \Lambda,\]
\[e^{-2 \pi i y \cdot \gamma} = \sum_{x \in E} a_x(y)  h(x - y) e^{-2 \pi i x \cdot \gamma}, \text{ where } \sum_{x \in E} |a_x(y)| \leq K(E, \Lambda_{\epsilon}).\]
If $f \in PW_\Lambda$, $\widehat{f} = F$, and noting that $F \in L^1({\Rhat^d})$, we have the following computation:
\begin{eqnarray}\label{eq:y}
& & \int_{\Lambda} |F(\gamma)|^2 G(\gamma) \ d \gamma  \nonumber \\
& = & \int_{\Lambda} F(\gamma)  G(\gamma) \left(\int \overline{f(w)} \left( \sum_{x \in E} a_x(w) h(x - w)  e^{2 \pi i x \cdot \gamma} \right) \,dw \right) \,d\gamma \nonumber\\
& = & \sum_{x \in E} \left(\int_{\Lambda} F(\gamma) G(\gamma)  e^{2 \pi i x \cdot \gamma} \ d\gamma \right) \left(\int \overline{f(w)} a_{x}(w)  h(x - w) \ dw \right) \\
& \leq & \left(\sum_{x \in E} | (F G) ^\vee (x)|^2  \right)^{1/2} \left( \sum_{x \in E} \left| \int \overline{f(w)} a_{x}(w)  h(x - w) \ dw \right|^2 \right)^{1/2}\nonumber\\
& \leq & K(E, \Lambda_{\epsilon}) \norm{h}_{2} \left( \int_{\Lambda} | F(\gamma) |^2 \, d\gamma \right)^{1/2} \left(\sum_{x \in E} \left| (F G)^\vee (x) \right|^2 \right)^{1/2},\nonumber
\end{eqnarray}
where the last step is a consequence of Lemma \ref{lemma:Fourier1}. Clearly, \eqref{eq:y} gives the first inequality of \eqref{eq:X}. As in Theorem \ref{theorem:Generalized_fourier_frames}, the second inequality of \eqref{eq:X} only requires the assumption that $E$ be separated, and, as such, it is a consequence of the Plancherel-P\'{o}lya theorem for $PW_\Lambda$.
\end{proof}
\end{thm}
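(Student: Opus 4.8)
The plan is to prove the two inequalities of \eqref{eq:X} by quite different means: the upper bound is a direct Bessel estimate, while the lower bound is where balayage does the work. For the upper bound I would note that $\supp(F) \subseteq \Lambda$ forces $FG$ to be supported in $\Lambda$ as well, so that $(FG)^\vee \in PW_\Lambda$, and the Plancherel--P\'{o}lya theorem for the separated sequence $E$ applies directly:
\[
\sum_{x \in E} |(FG)^\vee(x)|^2 \leq B_1 \norm{(FG)^\vee}_2^2 = B_1 \int_\Lambda |F(\gamma)|^2\, G(\gamma)^2\, d\gamma \leq B_1 \norm{G}_\infty^2 \int_\Lambda |F(\gamma)|^2\, d\gamma,
\]
using $0 \le G \le \norm{G}_\infty$. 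This yields the second inequality with $B = B_1 \norm{G}_\infty^2$.

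For the lower bound I would first fix the auxiliary data exactly as in the preceding theorems: invoke Theorem \ref{theorem:balayage1} to choose $\epsilon > 0$ with $K(E,\Lambda_\epsilon) < \infty$, and use Ingham's Theorem \ref{thm:balayage3} to obtain $h \in L^1(\RR^d)$ with $h(0)=1$, $\supp(\widehat{h}) \subseteq \overline{B(0,\epsilon)}$, and rapid decay. The fundamental identity of balayage (Theorem \ref{thm:balayage4}) then gives, for each $w \in \RR^d$ and each $\gamma \in \Lambda$, the expansion $e^{-2\pi i w\cdot\gamma} = \sum_{x\in E} a_x(w) h(x-w) e^{-2\pi i x\cdot\gamma}$ with the uniform control $\sum_{x\in E}|a_x(w)| \leq K(E,\Lambda_\epsilon)$. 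The central computation is to expand $\int_\Lambda |F(\gamma)|^2 G(\gamma)\, d\gamma$ by writing one factor $\overline{F(\gamma)}$ as $\int \overline{f(w)}\, e^{2\pi i w\cdot\gamma}\, dw$ and substituting the balayage expansion for the character $e^{2\pi i w\cdot\gamma}$, which is legitimate on $\Lambda$ because the symmetry $-\Lambda = \Lambda$ lets one apply the identity at $-\gamma$; after interchanging the summation over $E$ with the integrations, the remaining $\gamma$-integral of $F(\gamma)G(\gamma)e^{2\pi i x\cdot\gamma}$ over $\Lambda$ is recognized as $(FG)^\vee(x)$, producing
\[
\int_\Lambda |F(\gamma)|^2 G(\gamma)\, d\gamma = \sum_{x\in E} (FG)^\vee(x) \left( \int \overline{f(w)}\, a_x(w)\, h(x-w)\, dw \right).
\]

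To finish, I would apply the Cauchy--Schwarz inequality to the sum over $x\in E$, splitting the right-hand side as $\bigl(\sum_{x\in E}|(FG)^\vee(x)|^2\bigr)^{1/2}$ times $\bigl(\sum_{x\in E}|\int \overline{f(w)}\, a_x(w)\, h(x-w)\, dw|^2\bigr)^{1/2}$; the second factor is precisely the quantity controlled by Lemma \ref{lemma:Fourier1} (its modulus is unaffected by the conjugate on $f$), which bounds it by $K(E,\Lambda_\epsilon)\norm{h}_2 \bigl(\int_\Lambda |F|^2\bigr)^{1/2}$. Squaring the resulting inequality and dividing through by $\int_\Lambda |F(\gamma)|^2\, d\gamma$ gives the lower bound of \eqref{eq:X} with a constant $A$ of the stated form. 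The one genuinely delicate point, rather than the inequalities themselves, is justifying the interchange of the sum over $E$ with the integrations: this rests on the absolute summability $\sum_{x\in E}|a_x(w)| \leq K(E,\Lambda_\epsilon)$, the rapid decay of $h$ from Ingham's theorem, and the integrability $F \in L^1(\Rhat^d)$ (valid since $\Lambda$ is compact), which together supply the absolute convergence needed for Fubini.
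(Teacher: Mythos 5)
Your proposal is correct and follows essentially the same route as the paper's own proof: the balayage expansion of the character inside $\overline{F(\gamma)} = \int \overline{f(w)}e^{2\pi i w\cdot\gamma}\,dw$, interchange of sum and integrals, recognition of $(FG)^\vee(x)$, Cauchy--Schwarz, and Lemma \ref{lemma:Fourier1} for the lower bound, with the Plancherel--P\'{o}lya theorem giving the upper bound. You in fact supply slightly more detail than the paper, namely the explicit derivation of $B = B_1\norm{G}_\infty^2$ and the Fubini justification for the interchange.
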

%%%%%%%%%%%%%%%%%%%%%%%%%%%%%%%%%%

Theorem \ref{theorem:weightedbalayage} is an elementary generalization of the classical result for the case $G = 1$ on $\RR$, and itself has significant generalizations to other weights $G$. We have not written $(FG)^\vee$ as a convolution since for such generalizations there are inherent subtleties in defining the convolution of distributions, e.g., \cite{schw1966}, Chapitre VI, \cite{meye1981}, see \cite{bene1997}, pages 99-102, for contributions of Hirata and Ogata, Colombeau, et al. Even in the case of Theorem \ref{theorem:weightedbalayage}, $G^\vee = g$ is in the class of pseudo-measures, which themselves play a basic role in spectral synthesis \cite{bene1975}.
%%%%%%%%%%%%%%%%%%%%%%%%%%%%% 

\subsection{A bounded operator $B: L^{p}(\mathbb{R}^{d}) \rightarrow l^{p}(E), \ p > 1$} 
\label{sec:landau}

{\it a.}
In Example \ref{ex:fouierframebalayage}{\it b} we proved the lower frame bound assertion of Theorem \ref{theorem:balayage2}. This can also be achieved using Beurling's generalization of balayage to so-called linear balayage operators $B$, see \cite{beur1989}, pages 348-350. 

In fact, with this notion and assuming the hypotheses of Theorem \ref{thm:balayage4}, Beurling proved that the mapping,
\begin{eqnarray*}
   L^{p}(\mathbb{R}^d) & \longrightarrow & l^{p}(E),\quad p > 1,\\
   k & \mapsto & \{k_{x} \}_{x \in E},
\end{eqnarray*}
where
$$
\forall \ x \in E, \quad k_x = \int_{\mathbb{R}^d} a_{x}(y) h(x-y) k(y) \ dy, 
$$
has the property that
$$
 \exists \ C_p > 0 \text{ such that } \forall \ k \in L^{p}(\mathbb{R}^d),
$$
\begin{equation}
\label{eqn:Landau1}
  \sum_{x \in E} |k_x |^{p} \leq C_p \int |k(y)|^{p} dy.
\end{equation}

Let $p = 2$ and fix $f \in PW_{\Lambda}.$  We shall use (\ref{eqn:Landau1}) and the definition of norm to obtain the desired lower frame bound.
This is H.J. Landau's idea.  Set
$$
   I_k = \int_{\Lambda} F(\gamma) \overline{K(\gamma)} d\gamma, \quad \quad \widehat{f} = F, 
$$
where $K^{\vee} = k \in L^{2}({\mathbb R}^d).$   By balayage, we have
\[K(\gamma) = \sum_{x \in E} k_x e^{-2 \pi i x \cdot \gamma} \text{ on } \Lambda; \]
and so,
\[I_k = \sum_{x \in E} f(x) \overline{k_x}, \]
allowing us to use (\ref{eqn:Landau1}) to make the estimate,
\[ |I_k|^2 \leq C \|K \|_{2}^{2} \ \sum_{x \in E}|f(x)|^2.\]
By definition of $\|f \|_{2}$, we have
\[ \| f \|_{2} = \sup_{K} \frac{| I_{K} |}{\| K \|_{2}} \leq C \left( \sum_{x \in E}|f(x)|^2 \right)^{1/2},\]
and this is the lower frame bound inequality with bound $A = 1/C^2.$\\

Because of this approach we can think of balayage as "$l^{2}-L^{2}$ balayage".

{\it b.}
Motivated by part {\it a}, we shall say that $l^{1}-L^{2}$ {\it balayage is possible} for $(E, \Lambda)$, where $E$ is separated and $\Lambda$ is a compact set of positive measure $| \Lambda |$, if
\[ \exists \ C > 0 \text{ such that } \forall \ k \in L^{2}(\mathbb{R}^d), \widehat{k} = K,\]
\[\sum_{x \in E} | k_x | \leq C \int_{\Lambda} | K(\gamma)|^2  d \gamma\]
and
\[K(\gamma) = \sum_{x \in E} k_x e^{-2 \pi i x \cdot \gamma} \text{ on } \Lambda. \]
For fixed $f \in PW_{\Lambda}$ and using the notation of part {\it a,} we have
\begin{equation}\label{eqn17}
|I_k|^2 \leq \sum_{x \in E} |k_x|^2 \sum_{x \in E} | f(x) |^2.
\end{equation}
An elementary calculation gives
\[\sum_{x \in E} |k_x|^2 \leq C^2 | \Lambda | \int_{\Lambda} | K(\gamma)|^2 d\gamma, \]
which, when substituted into (\ref{eqn17}), gives
\[\frac{1}{C^2 | \Lambda |}  \left( \frac{| I_K |^2}{ \int_{\Lambda} | K(\gamma)|^2 d\gamma} \right) \leq  \sum_{x \in E} | f(x) |^2.\]
We obtain the desired lower frame inequality with bound $ A = 1/(C^2 | \Lambda |). $

%%%%%%%%%%%%%%%%%%%%%%%%%%%%%%%%%%%%%%%%%%%%
%%%%%%%%%%%%%%%%%%%%%%%%%%%%%%%%%%%%
%%%%%%%%%%%%%%%%%%%%%%%%%%%%%%%%%%%%%%%
%%%%%%%%%%%%%%%%%%%%%%%%%%%%%%%%%%%%%%%%%%%%%%

\section{Pseudo-differential operator frame inequalities}
\label{sec:pdo}

Let $\sigma \in \mathcal{S}^{\prime}(\mathbb{R}^d \times \widehat{\mathbb{R}}^d).$  The operator, $K_{\sigma},$ formally defined as
\[ (K_{\sigma} f)(x) = \int \sigma(x, \gamma) \widehat{f}(\gamma) e^{2 \pi i x \cdot \gamma} \ d\gamma, \]
is the \emph{pseudo-differential operator} with Kohn-Nirenberg symbol, $\sigma$, see \cite{groc2001} Chapter 14, \cite{groc2006} Chapter 8, \cite{horm1979}, and \cite{stei1993}, Chapter VI. 
For consistency with the notation of the previous sections, we shall define pseudo-differential operators, $K_s,$ with tempered distributional 
Kohn-Nirenberg symbols, $s \in  \mathcal{S}^{\prime}(\mathbb{R}^d \times \widehat{\mathbb{R}}^d),$ as
\[ (K_{s} \widehat{f})(\gamma) = \int s(y, \gamma) f(y) e^{-2 \pi i y \cdot \gamma} \ dy. \]

Further, we shall actually deal with Hilbert-Schmidt operators, $K \colon L^2(\widehat{\mathbb{R}}^d) \rightarrow L^2(\widehat{\mathbb{R}}^d)$;
and these, in turn, can be represented as $K = K_s,$ where $s \in L^2(\mathbb{R}^d \times \widehat{\mathbb{R}}^d)$.
Recall that $K \colon L^2(\widehat{\mathbb{R}}^d) \rightarrow L^2(\widehat{\mathbb{R}}^d)$ is a \emph{Hilbert-Schmidt operator} if
\[ {\sum}_{n=1}^{\infty} \| K e_n \|_2^2 < \infty \]
for some orthonormal basis, $\{e_n\}_{n=1}^{\infty},$ for $L^2(\widehat{\mathbb{R}}^d)$, in which case the \emph{Hilbert-Schmidt norm} of $K$ is defined as
\[ \| K \|_{HS} = \left(\sum_{n=1}^{\infty} \| K e_n \|_2^2  \right)^{1/2}, \]
and $\| K \|_{HS}$ is independent of the choice of orthonormal basis.  The first theorem about Hilbert-Schmidt operators is the following \cite{RieNag1955}:

\begin{thm}
If $K \colon L^2(\widehat{\mathbb{R}}^d) \rightarrow L^2(\widehat{\mathbb{R}}^d)$ is a bounded linear mapping and 
$(K \widehat{f})(\gamma) = \int m(\gamma, \lambda) \widehat{f}(\lambda) \ d\lambda,$ for some measurable function $m$,
then $K$ is a Hilbert-Schmidt operator if and only if $m \in L^2(\widehat{\mathbb{R}}^{2d})$ and, in this case, $\| K \|_{HS} = \| m \|_{L^2(\mathbb{R}^{2d})}.$
\end{thm}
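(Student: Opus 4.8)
\emph{Proof proposal.} The plan is to reduce the equality $\norm{K}_{HS} = \norm{m}_2$ to two applications of Parseval's identity, by matching the matrix coefficients of the operator $K$ against the Fourier coefficients of the kernel $m$ with respect to a product orthonormal basis. First I would fix an orthonormal basis $\{e_n\}_{n=1}^{\infty}$ for $L^2(\widehat{\mathbb{R}}^d)$ and form the product system $\Phi_{n,k}(\gamma,\lambda) = e_n(\gamma)\,\overline{e_k(\lambda)}$. Since $\{e_n\}$, and hence $\{\overline{e_k}\}$, are orthonormal bases, a routine separation-of-variables computation shows that $\{\Phi_{n,k}\}_{n,k}$ is an orthonormal basis for $L^2(\widehat{\mathbb{R}}^{2d})$.

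The key step is the identity $\inner{K e_k}{e_n} = \inner{m}{\Phi_{n,k}}$, valid whenever the relevant double integral converges absolutely. Indeed, unwinding the inner product on $L^2(\widehat{\mathbb{R}}^{2d})$ with the paper's convention $\inner{F}{G} = \int F\overline{G}$ gives $\inner{m}{\Phi_{n,k}} = \int \left( \int m(\gamma,\lambda)\,e_k(\lambda)\,d\lambda \right) \overline{e_n(\gamma)}\,d\gamma$, and the inner integral is exactly $(Ke_k)(\gamma)$ by the integral representation of $K$. When $m \in L^2(\widehat{\mathbb{R}}^{2d})$ this interchange is legitimate by Cauchy--Schwarz, since $\int\int |m|\,|e_n|\,|e_k| \le \norm{m}_2$. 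Granting the identity, two Parseval relations finish the norm equality: summing $\norm{Ke_k}_2^2 = \sum_n |\inner{Ke_k}{e_n}|^2$ over $k$ gives $\norm{K}_{HS}^2 = \sum_{n,k}|\inner{m}{\Phi_{n,k}}|^2$, and the right-hand side equals $\norm{m}_{L^2(\widehat{\mathbb{R}}^{2d})}^2$ by Parseval for the product basis. Because the right-hand side is manifestly basis-independent, this also reconfirms the basis-independence of $\norm{K}_{HS}$.

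From this the forward implication is immediate: if $m \in L^2$, the displayed chain shows $\sum_k \norm{Ke_k}_2^2 = \norm{m}_2^2 < \infty$, so $K$ is Hilbert--Schmidt with $\norm{K}_{HS} = \norm{m}_2$. The converse is where I expect the real difficulty, since $m$ is assumed only measurable and we cannot invoke Parseval on $m$ before knowing $m \in L^2$. If $K$ is merely Hilbert--Schmidt, then the numbers $\inner{Ke_k}{e_n}$ are square-summable over $(n,k)$, so Riesz--Fischer produces a function $\widetilde{m} \in L^2(\widehat{\mathbb{R}}^{2d})$ with exactly these coefficients against $\{\Phi_{n,k}\}$; the associated kernel operator $\widetilde{K}$ then satisfies $\inner{\widetilde{K}e_k}{e_n} = \inner{Ke_k}{e_n}$ for all $n,k$, whence $\widetilde{K} = K$ as bounded operators. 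The hard part is the remaining step, namely concluding $\widetilde{m} = m$ almost everywhere, i.e.\ that the integral kernel of a bounded operator is uniquely determined, together with the a priori integrability needed to make sense of $Ke_k$ before $m$ is known to lie in $L^2$. I would resolve this by testing the operator identity $K = \widetilde{K}$ against the products $e_j \otimes \overline{e_l}$ and using the density of finite linear combinations of such products in $L^2(\widehat{\mathbb{R}}^{2d})$ to force $m = \widetilde{m}$, which then places $m$ in $L^2$ and completes the equivalence.
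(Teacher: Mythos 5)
Your forward implication is sound and is the standard argument: when $m \in L^2(\Rhat^{2d})$, the Fubini interchange behind $\inner{Ke_k}{e_n} = \inner{m}{\Phi_{n,k}}$ is justified by Cauchy--Schwarz, and two applications of Parseval give $\sum_{k}\norm{Ke_k}_2^2 = \sum_{n,k}|\inner{m}{\Phi_{n,k}}|^2 = \norm{m}_2^2$. (For reference, the paper does not prove this theorem at all; it is quoted from Riesz--Nagy, so your proposal can only be measured against the classical proof.) The gap is in the converse, exactly at the step you flagged as hard, and the repair you propose does not close it. Concluding $m = \widetilde{m}$ a.e.\ by pairing with the products $e_j \otimes \overline{e_l}$ and citing density of their span in $L^2(\Rhat^{2d})$ is circular: completeness of an orthonormal system only forces an element \emph{of $L^2$} with vanishing coefficients to be zero, and membership of $u = m - \widetilde{m}$ in $L^2$ is precisely what is at stake. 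The failure is real, not just formal: there exist nonzero measurable functions outside $L^2$ all of whose integrals against a fixed countable orthonormal basis converge absolutely and vanish --- e.g.\ on $\RR$, take $u = v\,e^{x^2/2}$ where $v$ is a Stieltjes-type function with all polynomial moments zero; then $\int u\,h_n = 0$ for every Hermite function $h_n$. Moreover, even writing $\inner{m}{\Phi_{j,l}}$ as a double integral presupposes a Fubini interchange that is unjustified before one knows $|m|\,(|e_j|\otimes|e_l|)$ is integrable.

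The correct repair uses the full strength of the hypothesis --- the representation $(K\widehat{f})(\gamma) = \int m(\gamma,\lambda)\widehat{f}(\lambda)\,d\lambda$ holds, with a.e.\ absolutely convergent integrals, for \emph{every} $\widehat{f} \in L^2$, not merely for the countable basis --- and tests with indicator functions rather than basis functions. Fix bounded measurable sets $A, B \subseteq \Rhat^d$; taking $\widehat{f} = \mathbb{1}_{B}$ shows $\int_B |u(\gamma,\lambda)|\,d\lambda < \infty$ for a.e.\ $\gamma$, so the sets $D_n = \{\gamma \in A : \int_B |u(\gamma,\lambda)|\,d\lambda \le n\}$ exhaust $A$ up to a null set and $u \in L^1(D_n \times B)$. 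On $D_n \times B$ Fubini is now legitimate, and testing against $\mathbb{1}_{A'}\otimes\mathbb{1}_{B'}$ gives $\int_{A' \times B'} u = 0$ for every measurable rectangle $A' \times B' \subseteq D_n \times B$; since such rectangles form a $\pi$-system generating the product $\sigma$-algebra, $u = 0$ a.e.\ on $D_n \times B$, and letting $n \to \infty$ and exhausting over bounded $A, B$ yields $u = 0$ a.e. This is the kernel-uniqueness theorem for integral operators (cf.\ Halmos--Sunder); once it is in place, your Riesz--Fischer construction of $\widetilde{m}$ does complete the converse, and the norm identity follows from the forward direction applied to $\widetilde{m}$.
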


The following is our result about pseudo-differential operator frame inequalities.

\begin{thm}\label{thm:pseudoD}
Let $E = \{x_n\} \subseteq \mathbb{R}^d$ be a separated sequence, that is symmetric about $0 \in \mathbb{R}^d$; and
let $\Lambda \subseteq \widehat{\mathbb{R}}^d$ be an S-set of strict multiplicity, that is compact, convex, 
and symmetric about $0 \in \widehat{\mathbb{R}}^d$.  Assume balayage is possible for $(E, \Lambda)$.  Further, let $K$ be a Hilbert-Schmidt
operator on $L^2(\widehat{\mathbb{R}}^d)$ with pseudo-differential operator representation,
\[
(K \widehat{f})(\gamma) = (K_{s} \widehat{f})(\gamma) = \int s(y, \gamma) f(y) e^{-2 \pi i y \cdot \gamma} \ dy,
\]
where $s_{\gamma}(y) = s(y, \gamma) \in L^2(\mathbb{R}^d \times \widehat{\mathbb{R}}^d)$ is the Kohn-Nirenberg symbol and where we make the
further assumption that
\begin{equation}\label{eqn:assumpThm5.2}
 \forall \gamma \in \widehat{\mathbb{R}}^d, \quad s_{\gamma} \in C_b(\mathbb{R}^d)
 \quad and \quad {\rm supp}\,(s_{\gamma} e_{- \gamma})^{\widehat{}} \subseteq \Lambda. 
 \end{equation}
Then,
\[ \exists A,\,B > 0 \quad \text{such that} \quad \forall f \in L^2(\mathbb{R}^d) \backslash \{0\}, \]
\begin{equation}\label{eqn:Thm5.2}
 A \frac{ \| K_s \widehat{f} \|_2^4}{\| f \|_2^2} \leq \sum_{x \in E} | \langle (K_s \widehat{f})(\cdot), \overline{s(x, \cdot)} \ e_x(\cdot) \rangle |^2  \leq B \ \| s \|_{L^2(\mathbb{R}^d \times \widehat{\mathbb{R}}^d)}^{2} \| K_s \widehat{f} \|_2^2.   
 \end{equation}
\end{thm}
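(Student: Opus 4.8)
The idea is to place $F:=K_s\widehat f$ in the role played by $\widehat f$ in the proof of Theorem~\ref{theorem:weightedbalayage}, the one new feature being that $F$ need not be supported in $\Lambda$. Hypothesis \eqref{eqn:assumpThm5.2} says exactly that, for each fixed $\gamma\in\Rhat^d$, the function $u\mapsto(s_\gamma e_{-\gamma})(u)=s(u,\gamma)e^{-2\pi iu\cdot\gamma}$ lies in $\mathcal{C}(\Lambda)$; because $\Lambda=-\Lambda$, so does its complex conjugate. Writing the coefficient in the middle term of \eqref{eqn:Thm5.2} as the integral $\Psi(x):=\int F(\gamma)\,\overline{s(x,\gamma)}\,e^{2\pi ix\cdot\gamma}\,d\gamma=\int F(\gamma)\,\overline{(s_\gamma e_{-\gamma})(x)}\,d\gamma$, I observe that $\Psi$ is a superposition over $\gamma$ of functions of $u$ that are band-limited to $\Lambda$, so $\Psi\in PW_\Lambda$ and $\norm{\Psi}_2^2\le\norm{s}_{L^2(\RR^d\times\Rhat^d)}^2\norm{F}_2^2<\infty$. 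Thus the entire theorem reduces to the assertion that the samples $\{\Psi(x)\}_{x\in E}$ control $\norm{F}_2$ from both sides.

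For the \emph{lower} bound I would exploit the single identity
\[\inner{\Psi}{f}=\int\Psi(u)\,\overline{f(u)}\,du=\int F(\gamma)\,\overline{\int s(u,\gamma)f(u)e^{-2\pi iu\cdot\gamma}\,du}\,d\gamma=\int|F(\gamma)|^2\,d\gamma=\norm{F}_2^2,\]
which follows by interchanging the $u$- and $\gamma$-integrations and recognizing the inner integral as $(K_s\widehat f)(\gamma)=F(\gamma)$. Since $E$ is separated, $\Lambda$ is a compact S-set of strict multiplicity, and balayage is possible for $(E,\Lambda)$, Theorem~\ref{theorem:balayage2} guarantees that $\mathcal{E}(E)$ is a Fourier frame for $PW_\Lambda$, giving $A_0>0$ with $A_0\norm{\Psi}_2^2\le\sum_{x\in E}|\Psi(x)|^2$. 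Combined with Cauchy--Schwarz, $\norm{F}_2^2=\inner{\Psi}{f}\le\norm{\Psi}_2\norm{f}_2$, this yields the left-hand inequality of \eqref{eqn:Thm5.2} with $A=A_0$. (Alternatively, and in closer parallel to Theorem~\ref{theorem:weightedbalayage}, one may expand $\overline F(\gamma)=\int\overline{(s_\gamma e_{-\gamma})(w)}\,\overline{f(w)}\,dw$ by the fundamental identity of Theorem~\ref{thm:balayage4}, obtaining $\norm{F}_2^2=\sum_{x\in E}\Psi(x)\int a_x(w)h(x-w)\overline{f(w)}\,dw$, and then apply Cauchy--Schwarz together with the estimate of Lemma~\ref{lemma:Fourier1}, which uses only $\sum_{x\in E}|a_x(w)|\le K(E,\Lambda_\epsilon)$ and $\norm{h}_2<\infty$ and so applies with $\overline f\in L^2(\RR^d)$ in place of a Paley--Wiener function; this gives $A=1/[K(E,\Lambda_\epsilon)\norm{h}_2]^2$.)

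For the \emph{upper} bound I would use that $\{\Psi(x)\}_{x\in E}$ are the samples of $\Psi\in PW_\Lambda$ along a separated sequence. The Plancherel--P\'olya theorem supplies a Bessel bound $B_1$ with $\sum_{x\in E}|\Psi(x)|^2\le B_1\norm{\Psi}_2^2$, and Cauchy--Schwarz in $\gamma$ gives the pointwise bound $|\Psi(u)|^2\le\norm{F}_2^2\int|s(u,\gamma)|^2\,d\gamma$, so that $\norm{\Psi}_2^2\le\norm{s}_{L^2(\RR^d\times\Rhat^d)}^2\norm{F}_2^2$. Together these give the right-hand inequality of \eqref{eqn:Thm5.2} with $B=B_1$.

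The step I expect to demand real care is the justification of the interchanges of $\sum_{x\in E}$ with the $du$- and $d\gamma$-integrations (and of the Fourier transform with the $\gamma$-integral that exhibits $\Psi\in PW_\Lambda$), since $f$ lies only in $L^2(\RR^d)$ and $F=K_s\widehat f$ is not band-limited. I would first take $f\in L^1(\RR^d)\cap L^2(\RR^d)$, dominate the rearranged integrals using $s\in L^2(\RR^d\times\Rhat^d)$, the rapid decay $|h(x)|=O(e^{-\Omega(\norm{x})})$, and the uniform bound $\sum_{x\in E}|a_x(w)|\le K(E,\Lambda_\epsilon)$, and then pass to general $f$ by $L^2$-continuity of both sides. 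A secondary, purely clerical point is to track the complex conjugates so that the coefficient produced by the expansion is identified with the inner product $\inner{(K_s\widehat f)}{\overline{s(x,\cdot)}\,e_x}$ exactly as written in \eqref{eqn:Thm5.2}.
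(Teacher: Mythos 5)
Your proposal is correct, and its treatment of the lower frame bound takes a genuinely different route from the paper's. The two treatments of the upper bound coincide: both show that the coefficient function (your $\Psi$, the paper's $I_s\widehat{f}$) lies in $PW_\Lambda$ with $L^2$-norm at most $\norm{s}_{L^2(\RR^d\times\Rhat^d)}\norm{K_s\widehat{f}}_2$ --- the support claim being the paper's step \emph{b.ii}, verified by testing against functions supported off $\Lambda$ --- and then invoke Plancherel--P\'olya over the separated set $E$. For the lower bound, however, the paper never invokes the Fourier-frame theorem: it applies the fundamental identity of Theorem \ref{thm:balayage4} to each $k_\gamma=s_\gamma e_{-\gamma}\in\mathcal{C}(\Lambda)$, expands $k_\gamma(y)=\sum_{x\in E}k_\gamma(x)a_x(y)h(x-y)$ inside the double integral \eqref{eqn:5A}, and closes with Cauchy--Schwarz and the H\"older estimate \eqref{eqn:5D} (the same estimate as in Lemma \ref{lemma:Fourier1}), producing the explicit constant $A=1/\left(K(E,\Lambda_\epsilon)\norm{h}_2\right)^2$. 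You instead observe that the middle-term coefficients are samples of the single Paley--Wiener function $\Psi$, prove the identity $\inner{\Psi}{f}=\norm{K_s\widehat{f}}_2^2$, and quote Beurling's Theorem \ref{theorem:balayage2} for the lower frame inequality on $PW_\Lambda$. This is more modular --- band-limitedness of $\Psi$ does double duty for both bounds, and balayage enters only through the already-available frame theorem --- at the cost of a non-explicit constant $A_0$; your parenthetical alternative (expanding $\overline{F}$ by Theorem \ref{thm:balayage4} and using Lemma \ref{lemma:Fourier1}, whose proof indeed never uses that the integrated function is band-limited) is essentially the paper's own argument and recovers its constant. The conjugation caveat you flag is real but is inherited from the paper itself: your $\Psi(x)=\inner{K_s\widehat{f}}{s(x,\cdot)\,e_{-x}}$ is not literally the stated inner product $\inner{K_s\widehat{f}}{\overline{s(x,\cdot)}\,e_x}$, but the paper's own step \eqref{eqn:5C} produces $\overline{\Psi(x)}$ and silently drops the bar in the next display, so your bookkeeping matches what the paper actually proves.
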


\begin{proof}
{\it a.} In order to prove the assertion for the lower frame bound, we
first combine the pseudo-differential operator $K_s$, with Kohn-Nirenberg symbol $s$, and balayage to compute
\begin{align}\label{eqn:5A}
& \int | (K_s \widehat{f})(\gamma) |^2 \ d\gamma = \int \overline{(K_s \widehat{f})(\gamma)} (K_s \widehat{f})(\gamma) \ d\gamma \\
= & \int \overline{(K_s \widehat{f})(\gamma)} \left( \int s(y, \gamma) f(y) e^{-2 \pi i y \cdot \gamma} \ dy \right) d\gamma \nonumber \\
= & \int \overline{(K_s \widehat{f})(\gamma)} \left( \int f(y) k(y, \gamma) \ dy \right) d\gamma \nonumber \\
= & \int \overline{(K_s \widehat{f})(\gamma)} \left( \int f(y) \left( \sum_{x \in E} k(x, \gamma) a_x(y, \gamma) h(x-y) \right) dy \right) d\gamma, \nonumber
\end{align}
where $k_{\gamma}(y) = k(y, \gamma) = s(y, \gamma) e^{-2 \pi i y \cdot \gamma}$ on $\mathbb{R}^d$ and $k_{\gamma} \in \mathcal{C}(\Lambda)$
for each fixed $\gamma \in \widehat{\mathbb{R}}^d$, and where
\begin{equation}
\label{eqn:5B}
            \sup_{\gamma \in \widehat{\mathbb{R}}^d} \sup_{y \in {\mathbb{R}}^d} 
            \sum_{x \in E}\,| a_x(y, \gamma)|         \leq K(E, \Lambda_{\epsilon}) = C < \infty.
\end{equation}
Because of Theorems \ref{thm:balayage3} and \ref{thm:balayage4}, we do not need to have the function $h$ depend on $\gamma \in \widehat{\mathbb{R}}^d$.  
Further, because of (\ref{eqn:5B}) and estimates we shall make, we can write $a_x(y, \gamma) = a_x(y)$.

Thus, the right side of (\ref{eqn:5A}) is
\begin{align}
\label{eqn:5C}
& \int f(y) \left[ \sum_{x \in E} a_x(y) h(x-y) \left( \int \overline{(K_s \widehat{f})(\gamma)} k(x, \gamma) \ d\gamma \right) \right] dy \\
= & \sum_{x \in E} \left( \int f(y) a_x(y) h(x-y) \ dy \int   \overline{(K_s \widehat{f})(\gamma)} k(x, \gamma) \ d\gamma \right) \nonumber \\
\leq & \left(  \sum_{x \in E} \left| \int f(y) a_x(y) h(x-y) \ dy \right|^2 \right)^{1/2} \left(  \sum_{x \in E} \left|   \overline{(K_s \widehat{f})(\gamma)} k(x, \gamma)   \right|^2 \right)^{1/2} . \nonumber
\end{align}
Note that, by H\"{o}lder's inequality applied to the integral, we have
\begin{align}
\label{eqn:5D}
& \sum_{x \in E} \left| \int f(y) a_x(y) h(x-y) \ dy \right|^2 \\
\leq & \sum_{x \in E} \left| \left(  \int | a_x(y) | | h(x-y) |^2 \ dy \right)^{1/2} \left( \int | f(y) |^2 | a_x(y)| \ dy \right)^{1/2} \right|^2 \nonumber \\
\leq & \sum_{x \in E} \left(  C \int | h(x-y) |^2 \ dy \right) \left( \int | f(y) |^2 | a_x(y)| \ dy \right)  \nonumber \\
\leq & C \| h \|_2^2 \int \left( ( \sum_{x \in E} | a_x(y) | )  \left| f(y) \right|^2   \ dy \right)  \nonumber \\
\leq & C^2 \| h \|_2^2 \| f \|_2^2. \nonumber
\end{align}

Combining (\ref{eqn:5A}), (\ref{eqn:5C}), and (\ref{eqn:5D}), we obtain
\[ 
\| K_s \widehat{f} \|_2^2 \leq C \|h\|_2 \| f \|_2 \left( \sum_{x \in E} \left|  \int (K_s \widehat{f})(\gamma) k(x, \gamma) \ d\gamma \right|^2  \right)^{1/2}.
\]
Consequently, setting $A = 1/(C\|h\|_2)^2$, we have
\begin{align*}
\forall f \in L^2(\mathbb{R}^d) \backslash \{0\}, \quad A \frac{\| K_s \widehat{f} \|_2^4}{\| f \|_2^2} & \leq \sum_{x \in E} \left| \int (K_s \widehat{f} )(\gamma) s(x, \gamma) e^{-2 \pi i x \cdot \gamma} \ d\gamma \right|^2 \\
& = \sum_{x \in E}  | \langle (K_s \widehat{f})(\cdot), \overline{s(x, \cdot)} e_x(\cdot) \rangle |^2
\end{align*}
and the assertion for the lower frame bound is proved.

{\it b.i.} In order to prove the assertion for the upper frame bound, we begin by formally defining
\[
   \forall f \in L^2({\mathbb R}^d), \quad (I_s \widehat{f})(x) = \int s(x,\gamma) (K_s \widehat{f})(\gamma)  e^{-2 \pi i x \cdot \gamma} \ d\gamma,
\]
which is the inner product in (\ref{eqn:Thm5.2}).
%%%%%%
%%Use proof that John faxed on September 20

Note that $I_s \widehat{f} \in L^2(\mathbb{R}^d).$  In fact, we know $K_s \widehat{f} \in L^2(\widehat{\mathbb{R}}^d)$ and $s \in  L^2(\mathbb{R}^d \times \widehat{\mathbb{R}}^d)$ so that 
\[ | I_s \widehat{f}(x) |^2 \leq \int | s(x, \gamma) |^2 \ d\gamma \int | K_s \widehat{f}(\gamma) |^2 \ d\gamma \]
by H\"{o}lder's inequality, and, hence, 
\begin{equation}\label{eqn:I_s_f_hat}
\| I_s \widehat{f} \|_2^2 \leq \| s \|_{ L^2(\mathbb{R}^d \times \widehat{\mathbb{R}}^d)}^2 \| K_s \widehat{f}  \|_2^2.
\end{equation}
{\it b.ii.} We shall now show that supp$((I_s \widehat{f})\ \widehat{} \ ) \subseteq \Lambda$, and to this end we use (\ref{eqn:assumpThm5.2}).
We begin by computing
 \begin{align*}
 (I_s\widehat{f})\ \widehat{}\ (\omega) & = \int \left( \int s(y, \gamma) (K_s \widehat{f})(\gamma) \ e^{-2 \pi i y \cdot \gamma} \ d\gamma \right) e^{-2 \pi i y \cdot \omega} \ dy \\
 & = \int  (K_s \widehat{f})(\gamma) \left( \int k_{\gamma}(y) e^{-2 \pi i y \cdot \omega} \ dy \right) \ d\gamma \\
 & = \int (K_s \widehat{f})(\gamma) (k_{\gamma})^{\widehat{}} (\omega) \ d\gamma,
 \end{align*}
where \[k_{\gamma}(y) = k(y, \gamma) =   s(y,\gamma)  e^{-2 \pi i y \cdot \gamma} = (s_{\gamma} e_{- \gamma})(y), \]
as in part {\it a}.  Also, supp$(k_{\gamma})^{\widehat{}} \ \subseteq \Lambda$ by our assumption, (\ref{eqn:assumpThm5.2}); that is, for each $\gamma \in \widehat{\mathbb{R}}^d, (k_{\gamma})^{\widehat{}} \ = 0$ a.e. on $\widehat{\mathbb{R}}^d \backslash \Lambda$.

Since supp$(I_s\widehat{f})\ {\widehat{}} \ $ is the smallest closed set outside of which $(I_s\widehat{f})\ \widehat{}\ $ is 0 a.e., we need only show that if supp$(L) \subseteq \widehat{\mathbb{R}}^d \backslash \Lambda$ then
\[ \int L(\omega) (I_s\widehat{f})\ \widehat{}\ (\omega) \ d\omega = 0. \]
This follows because
\[ \int L(\omega) (I_s\widehat{f})\ \widehat{} \ (\omega) \ d\omega = \int (K_s \widehat{f})(\gamma) \left( \int L(\omega) (k_{\gamma})^{\widehat{}} (\omega) \ d\omega \right) \ d\gamma\]
and $(k_{\gamma})^{\widehat{}} = 0$ on $\widehat{\mathbb{R}}^d \backslash \Lambda.$

{\it b.iii.} Because of parts ${\it b. i}$ and ${\it b. ii}$, we can invoke the P{\'o}lya-Plancherel theorem to assert the existence of $B > 0$ such that
\[ \forall f \in L^2(\mathbb{R}^d), \quad \sum_{x \in E} | (I_s \widehat{f}) (x) | \leq B \| I_s \widehat{f} \|_2^2, \]
and the upper frame inequality of (\ref{eqn:Thm5.2}) follows from (\ref{eqn:I_s_f_hat}).

%%%%%%%%%%%%%%%%

\end{proof}

\begin{example}
We shall define a Kohn-Nirenberg symbol class whose elements $s$ satisfy the hypotheses of Theorem \ref{thm:pseudoD}.

Choose $\{ \lambda_j \} \subseteq \text{int}(\Lambda), a_j \in C_b(\mathbb{R}^d) \cap L^2(\mathbb{R}^d),$ 
and $b_j \in C_b(\widehat{\mathbb{R}}^d) \cap L^2(\widehat{\mathbb{R}}^d)$ with the following properties:\\

{\it i.} \; $ \sum_{j=1}^{\infty} | a_j(y) b_j(\gamma)|$ is uniformly bounded and converges uniformly on $\mathbb{R}^d \times \widehat{\mathbb{R}}^d$;\\

{\it ii.} \; $ \sum_{j=1}^{\infty} \| a_j \|_2 \| b_j \|_2 < \infty;$\\

{\it iii.} \; $ \forall j = 1, \ldots, \ \exists \epsilon_j > 0$ such that $\overline{B(\lambda_j, \epsilon_j)} \subseteq \Lambda$ and supp$(\widehat{a}_j) \subseteq \overline{B(0, \epsilon_j)}.$\\

\noindent
These conditions are satisfied for a large class of functions $a_j$ and $b_j$.

  The Kohn-Nirenberg symbol class consisting of functions, $s$, defined as
\[ 
 s(y, \gamma) = \sum_{j=1}^{\infty} a_j(y) b_j(\gamma) e^{-2 \pi i y \cdot \lambda_j} 
 \]
 satisfy the hypotheses of Theorem \ref{thm:pseudoD}.  To see this, first note that condition \emph{i} tells us that, if we set $s_{\gamma}(y) = s(y, \gamma),$
 then
 \[ \forall \gamma \in \widehat{\mathbb{R}}^d, \quad s_{\gamma} \in C_b(\mathbb{R}^d). \]
 Condition \emph{ii} allows us to assert that $s \in L^2(\mathbb{R}^d \times \widehat{\mathbb{R}}^d)$ since we can use Minkowski's inequality
 to make the estimate,
 \[ \| s \|_{L^2(\mathbb{R}^d \times \widehat{\mathbb{R}}^d)} \leq \sum_{j=1}^{\infty} \left( \int \int \left| b_j(\gamma) a_j(y) e^{-2 \pi i y \cdot (\lambda_j - \gamma)} \right|^2 \ dy \ d\gamma \right)^{1/2} = \sum_{j=1}^{\infty} \| a_j \|_2 \|b_j \|_2 .
 \]
 Finally, using condition \emph{iii}, we obtain the support hypothesis, supp$(s_{\gamma} e_{-\gamma})^{\widehat{}} \subseteq \Lambda,$ of
 Theorem \ref{thm:pseudoD} for each $\gamma \in \widehat{\mathbb{R}}^d$, because of the following calculations:
 \[
 (s_{\gamma} e_{-\gamma})^{\widehat{}}(\omega) = \sum_{j=1}^{\infty} b_j(\gamma) (\widehat{a}_{j} \ast \delta_{- \lambda_j})(\omega)
 \]
 and, for each $j$,
 \[\text{supp}(\widehat{a}_j \ast \delta_{- \lambda_j}) \subseteq \overline{B(0, \epsilon_j)} + \{ \lambda_j \} \subseteq \overline{B(\lambda_j, \epsilon_j)} \subseteq \Lambda.\]
\end{example}

%and  supp(s_{\gamma} e_{\gamma})^{\widehat} \subseteq 

%%%%%%%%%%%%%%%%%%%%%%%%%%%%%%%%%%%%%%%%%%%%%%%%%%%%%%%%%
%%%%%%%%%%%%%%%%%%%%%%%%%%%%%%%%%%%%%%%%%%%%%%%%%%%%%%%%%

\section{The Beurling covering theorem}
\label{sec:covering}

Let $\Lambda \subseteq \Rhat^d$ be a convex, compact set which is symmetric about the origin and has non-empty interior. Then $\norm{\cdot}_\Lambda$, defined by
$$
   \forall \gamma \in \Rhat^d ,\quad \norm{\gamma}_\Lambda = \inf \{\rho > 0 : \gamma \in {\rho}\Lambda \},
$$
is a norm on $\Rhat^d$ equivalent to the Euclidean norm. The polar set $\Lambda^\ast \subseteq \RR^d$ of $\Lambda$ is defined as
$$
   \Lambda^\ast = \{x \in \RR^d: x \cdot \gamma \leq 1, \text{ for all } \gamma \in \Lambda \}.
$$
It is elementary to check that $\Lambda^\ast$ is a convex, compact set which is symmetric about the origin, and that it has non-empty interior.
\begin{example}
Let $\Lambda = [-1,1] \times [-1,1]$. Then, for $(\gamma_1,\gamma_2)\in \Rhat^2$,
$$
\norm{(\gamma_1,\gamma_2)}_\Lambda = \inf \{\rho >0: |\gamma_1| \leq \rho, |\gamma_2| \leq \rho \}= \norm{(\gamma_1,\gamma_2)}_\infty.
$$
The polar set of $\Lambda$ is
$$
\Lambda^\ast = \{(x_1,x_2): |x_1|+|x_2|\leq 1\} = \{(x_1,x_2): \norm{(x_1,x_2)}_1 \leq 1\}.
$$
 
\end{example}

\begin{thm}\label{thm:covering}(Beurling covering theorem)
 Let $\Lambda \subseteq \Rhat^d$ be a convex, compact set which is symmetric about the origin and has non-empty interior, and let $E \subseteq \RR^d$ be a separated set satisfying the covering property, $$\bigcup_{y \in E}\tau_y \Lambda^\ast = \RR^d.$$
If $\rho <1/4$, then $\{(e_{-x} \ \mathbb{1}_{\Lambda})^\vee: x \in E \}$ is a Fourier frame for $PW_{\rho\Lambda}$.
\end{thm}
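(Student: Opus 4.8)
The plan is to obtain the frame conclusion from Beurling's sufficient condition, Theorem~\ref{theorem:balayage2}, applied to the pair $(E,\rho\Lambda)$. Since $\rho\Lambda$ is a dilate of $\Lambda$, it is again convex, compact, symmetric about the origin, and of non-empty interior; in particular it has positive Lebesgue measure, so $\mathbb{1}_{\rho\Lambda}\,dm$ is a nonzero measure in $M_b(\rho\Lambda)$ whose inverse transform vanishes at infinity by Riemann--Lebesgue, whence $\rho\Lambda$ is a set of strict multiplicity; and, being a convex body, it is an S-set. As $E$ is separated by hypothesis, the entire statement reduces to the single claim that \emph{balayage is possible for} $(E,\rho\Lambda)$. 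Granting this, Theorem~\ref{theorem:balayage2} gives that $\{(e_{-x}\,\mathbb{1}_{\rho\Lambda})^\vee:x\in E\}$ is a Fourier frame for $PW_{\rho\Lambda}$; and since $\supp\widehat{f}\subseteq\rho\Lambda$ forces $\inner{\widehat{f}}{e_{-x}\mathbb{1}_{\Lambda}}=f(x)=\inner{\widehat{f}}{e_{-x}\mathbb{1}_{\rho\Lambda}}$, the frame coefficients for the stated system $\{(e_{-x}\,\mathbb{1}_{\Lambda})^\vee\}$ coincide with these, so the two systems satisfy identical frame inequalities on $PW_{\rho\Lambda}$.

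To verify balayage for $(E,\rho\Lambda)$ I would invoke the spectral criterion recorded just before Theorem~\ref{theorem:balayage1} (available because $\rho\Lambda$ is an S-set of strict multiplicity): balayage is possible for $(E,\rho\Lambda)$ if and only if there is $K>0$ with
\[
  \forall f\in\mathcal{C}(\rho\Lambda),\qquad \norm{f}_{\infty}\le K\,\sup_{x\in E}|f(x)|.
\]
Thus it suffices to manufacture such a $K$ from the covering hypothesis $\bigcup_{y\in E}\tau_y\Lambda^\ast=\RR^d$.

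Fix $f\in\mathcal{C}(\rho\Lambda)$, so $f$ is bounded, continuous, and band-limited to $\rho\Lambda$. Given $u\in\RR^d$, the covering furnishes $y\in E$ with $u-y\in\Lambda^\ast$, and I would compare $f(u)$ to the single sample $f(y)$ by restricting to the segment: set $g(t)=f(y+t(u-y))$. The spectrum of $g$ lies in $\{(u-y)\cdot\gamma:\gamma\in\rho\Lambda\}$, and the defining property of the polar set, $u-y\in\Lambda^\ast$, forces $|(u-y)\cdot\gamma|\le\rho$ for every $\gamma\in\rho\Lambda$; hence $g$ is a bounded function band-limited to $[-\rho,\rho]$ with $\norm{g}_{\infty}\le\norm{f}_{\infty}$. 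The comparison $|f(u)-f(y)|=|g(1)-g(0)|$ is therefore controlled by the sharp one-dimensional endpoint oscillation quantity
\[
  \omega(\rho):=\sup\{\,|g(1)-g(0)|:\ \norm{g}_{\infty}\le1,\ \supp\widehat{g}\subseteq[-\rho,\rho]\,\}.
\]
The role of the hypothesis $\rho<1/4$ is precisely that $\omega(\rho)<1$ on this range, with $\rho=1/4$ the Nyquist threshold at which the oscillation across one $\Lambda^\ast$-cell saturates (already $g(t)=\sin(2\pi\rho t)$ gives $|g(1)-g(0)|=\sin(2\pi\rho)\uparrow1$ as $\rho\uparrow1/4$). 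Writing $\delta=\omega(\rho)<1$ and taking the supremum of $|f(u)|\le\sup_{E}|f|+\delta\norm{f}_{\infty}$ over $u\in\RR^d$ yields $\norm{f}_{\infty}\le\sup_{E}|f|+\delta\norm{f}_{\infty}$, i.e. the desired inequality with $K=1/(1-\delta)$.

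The hard part is the sharp extremal inequality $\omega(\rho)<1$ for every $\rho<1/4$, and it is here that the value $1/4$ must be earned rather than assumed. Crude estimates fall short: the sharp Bernstein bound $\norm{g'}_{\infty}\le2\pi\rho\norm{g}_{\infty}$ only gives $|g(1)-g(0)|\le2\pi\rho$, hence $\delta<1$ merely for $\rho<1/(2\pi)$, and a Neumann-series construction of the balayage measure built on the phase estimate $|e^{2\pi it}-1|\le2\sin\pi\rho$ is weaker still. Reaching the optimal $\rho<1/4$ requires solving the endpoint extremal problem defining $\omega(\rho)$ exactly---this is the substance of the Beurling--Wu refinement---and then checking that the bound survives for the distributional, exponential-type functions comprising $\mathcal{C}(\rho\Lambda)$; the latter technicality can be dispatched by first regularizing $f$ with the Ingham window $h$ of Theorem~\ref{thm:balayage3} and passing to the limit. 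Because the reduced one-dimensional band is always $[-\rho,\rho]$, independent of $u$ and of the chosen $y$, the contraction constant $\delta$ is uniform over all cells, and this uniformity is exactly what delivers a single finite $K$.
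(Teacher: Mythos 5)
Your skeleton is the right one, and it is essentially the route of the sources the paper cites for this theorem (\cite{BenWu1999b}, \cite{BenWu2000}): reduce everything to proving balayage for $(E,\rho\Lambda)$ via the sup-norm sampling criterion for $\mathcal{C}(\rho\Lambda)$, then invoke Theorem \ref{theorem:balayage2}. Your verification of strict multiplicity of $\rho\Lambda$ (Riemann--Lebesgue applied to $\mathbb{1}_{\rho\Lambda}\,d\gamma$), the identification of the frame coefficients for $\mathbb{1}_{\Lambda}$ and $\mathbb{1}_{\rho\Lambda}$ on $PW_{\rho\Lambda}$, and the polar-set observation that $g(t)=f(y+t(u-y))$ has spectrum in $[-\rho,\rho]$ are all correct (the assertion that a compact convex body is an S-set is also true, but needs an argument, e.g.\ dilation $f_r(x)=f(rx)$, $r\uparrow 1$, using that $r\rho\Lambda$ is compactly contained in the interior of $\rho\Lambda$). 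The fatal problem is the step you yourself flag as the hard part: the extremal claim $\omega(\rho)<1$ for all $\rho<1/4$ is \emph{false}. The function $g(t)=e^{2\pi i \rho t}$ has $\norm{g}_\infty=1$, spectrum $\{\rho\}\subseteq[-\rho,\rho]$, and $|g(1)-g(0)|=2\sin(\pi\rho)$; the same value is achieved by the real function $g(t)=\sin\bigl(2\pi\rho(t-\tfrac12)\bigr)$. Hence $\omega(\rho)\ge 2\sin(\pi\rho)$, which equals $1$ at $\rho=1/6$ and exceeds $1$ on $(1/6,1/4)$. Your example $\sin(2\pi\rho t)$ is anchored at $t=0$ and is not extremal --- shifting the sine to be odd about the midpoint of the step doubles the oscillation. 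Consequently your contraction $\norm{f}_\infty\le \sup_E|f|+\delta\norm{f}_\infty$ has $\delta<1$ only for $\rho<1/6$, and no two-point oscillation bound of this kind can ever produce the constant $1/4$. (Incidentally, your side remark is also backwards: the phase estimate threshold $1/6$ is \emph{better} than Bernstein's $1/(2\pi)$.)

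The missing idea, which is the substance of Beurling's argument, is to anchor the one-dimensional comparison at a point of (near-)maximum rather than at an arbitrary point. Given $u$ with $|f(u)|\ge(1-\epsilon)\norm{f}_\infty$, choose $|c|=1$ with $cf(u)=|f(u)|$ and replace $f$ by $\mathrm{Re}(cf)$; since $-\Lambda=\Lambda$, this real-valued function still lies in $\mathcal{C}(\rho\Lambda)$ --- this is precisely where the symmetry hypothesis is used. Beurling's lemma then states: if $g$ is real, bounded, with $\supp\widehat{g}\subseteq[-\rho,\rho]$, and $g$ attains its supremum $M$ at $t_0$, then $g(t)\ge M\cos\bigl(2\pi\rho(t-t_0)\bigr)$ for $|t-t_0|\le 1/(4\rho)$; the vanishing of $g'$ at the maximum forces second-order (cosine) tangency, and this is exactly what raises the threshold from $1/6$ to $1/4$, because $\cos(2\pi\rho t)$ remains positive out to $t=1/(4\rho)$. (The near-maximum case is handled by translating, $f(\cdot+u_n)$ with $|f(u_n)|\to\norm{f}_\infty$, and passing to a boundedly convergent limit in $\mathcal{C}(\rho\Lambda)$.) Applying this along the segment from the near-maximum point $u$ to the covering point $y\in E$, $u-y\in\Lambda^\ast$, yields $\sup_{x\in E}|f(x)|\ge\cos(2\pi\rho)\,\norm{f}_\infty$, i.e.\ the balayage criterion with $K=1/\cos(2\pi\rho)<\infty$ precisely when $\rho<1/4$; the rest of your reduction then goes through as you describe.
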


Theorem \ref{thm:covering} \cite{BenWu1999b}, \cite{BenWu2000} involves the Paley-Wiener theorem and properties of balayage, and it depends on the theory developed in \cite{beur1989}, pages 341-350, \cite{beur1966}, and \cite{land1967}.
For a recent development, see \cite{OleUla2012}.

\section{Epilogue}\
\label{sec:epilogue}
This paper is rooted in Beurling's deep ideas and techniques dealing with balayage, that themselves have spawned
wondrous results in a host of areas ranging from Kahane's creative formulation and theory exposited in \cite{kaha1970} to the
setting of various locally compact abelian groups with surprising twists and turns and many open problems, e.g., 
\cite{shap1977}, \cite{shap1978}, to the new original chapter on quasi-crystals led by by Yves
Meyer, e.g., \cite{hof1995}, \cite{meye1995}, \cite{laga2000}, \cite{MatMey2009},
\cite{MatMey2010}, \cite{MatMeyOrt2013}, \cite{meye2012} as well as the revisiting by Beurling \cite{beur1985}. 

Even with the focused theme of this paper, there is the important issue of implementation and computation
vis a vis balayage and genuine applications of non-uniform sampling. 

\section*{Acknowledgements}
The first named author gratefully acknowledges the support of MURI-AFOSR Grant FA9550-05-1-0443.
The second named author gratefully acknowledges the support of MURI-ARO Grant W911NF-09-1-0383,
NGA Grant HM-1582-08-1-0009, and DTRA Grant HDTRA 1-13-1-0015.
Both authors also benefited from insightful observations by Professors Carlos Cabrelli, Hans
Feichtinger,
Matei Machedon, Basarab Matei, Ursula Molter, and Kasso Okoudjou. 
Further, although the interest of the second named author in this topic goes back to the 1960s,
he is especially appreciative of his collaboration in the late 1990s with Dr.~Hui-Chuan ~Wu
related to Theorem \ref{theorem:Generalized_fourier_frames} and the material in Section \ref{sec:covering}.
Finally, the second named author has had
the unbelievably good fortune through the years to learn from Henry J. Landau,
a grand master in every way. His explicit contributions for this paper are noted in Section \ref{sec:landau}.

%%%%%%%%%%%%%%%%%%%%%%%%%%%%%%%%%%%%%%%%%%%%
%%%%%%%%%%%%%%%%%%%%%%%%%%%%%%%%%%%%%%%%%%%%%

\bibliographystyle{amsplain}
%%% \bibliographystyle{amsalpha}
%% No .bib with W7
%% Use .bib with Linux and Mac
\bibliography{2013-09-26JBbib.bib}

\end{document}